\newcommand\numberthis{\addtocounter{equation}{1}\tag{\theequation}}
\numberwithin{equation}{section}
  \theoremstyle{plain}
    \newtheorem{lemma}[equation]{Lemma}
    \newtheorem{theorem}[equation]{Theorem}
    \newtheorem{proposition}[equation]{Proposition}
  \theoremstyle{definition}
    \newtheorem{definition}[equation]{Definition}
    \newtheorem{remark}[equation]{Remark}
  \renewcommand{\complement}{\mathsf{c}}
      \tikzset{->-/.style={decoration={markings,mark=at position #1 with {\arrow{>}}},postaction={decorate}}}
\definecolor{colorlinks}{RGB}{0, 24, 168}
\definecolor{colorcites}{RGB}{124, 10, 2}
  \title[A generalisation of the dimer model]{A generalisation of the honeycomb\\dimer model to higher dimensions}
  \author{Piet Lammers}
  \date{}
  \address{Statistical Laboratory, Centre for Mathematical Sciences, University of Cambridge}
  \email{p.g.lammers@statslab.cam.ac.uk}
\newcommand\x{\mathbf x}
\newcommand\y{\mathbf y}
\newcommand\n{\mathbf n}
\newcommand\e{\mathbf e}
\newcommand\g{\mathbf g}
\newcommand\0{\mathbf 0}
\newcommand\s{\mathbf s}
\newcommand\z{\mathbf z}
\newcommand\R{\mathbb R}
\newcommand\Z{\mathbb Z}
\renewcommand\P{\mathbb P}
\newcommand\E{\mathbb E}
\newcommand\p{\mathbf p}
\begin{document}


\makeatletter
\@namedef{subjclassname@2020}{\textup{2020} Mathematics Subject Classification}
\makeatother
\subjclass[2020]{Primary 82B41, 60K35; secondary 60F10}
\keywords{
  Dimer model,
  random tilings,
  height functions,
  variational principle,
  limit shapes,
  surface tension strict convexity%
}
\begin{abstract}
    Linde, Moore, and Nordahl introduced a generalisation of the honeycomb dimer model to higher dimensions.
    The purpose of this article is to describe a number of structural properties of this generalised model.
    First, it is shown that the samples of the model are in one-to-one correspondence with the perfect matchings of a hypergraph.
    This leads to a generalised Kasteleyn theory: the partition function of the model equals the Cayley hyperdeterminant of the adjacency hypermatrix of the hypergraph.
    Second, we prove an identity which relates the covariance matrix of the random height function directly to the random geometrical structure of the model.
    This identity is known in the planar case but is new for higher dimensions.
    It relies on a more explicit formulation of Sheffield's \emph{cluster swap} which is made possible by the structure of the honeycomb dimer model.
    Finally, we use the special properties of this explicit cluster swap to give a new and simplified proof of strict convexity of the surface tension in this case.
\end{abstract}

\maketitle

\setcounter{tocdepth}{1}
\tableofcontents


\section{Introduction}
\label{sec:intro}

\subsection{Background}
Random models on shift-invariant Euclidean graphs such as the square lattice
and the hexagonal lattice form a well-known subject of study in both combinatorics and statistical physics~\cite{grimmett_2018}.
There are several integrable models which allow for a quantitative analysis.
In the integrable setting, the focus is on deriving quantitative results concerning (asymptotics of)
 partition functions and correlation functions.
Examples of such models are
the Ising model~\cite{ONSAGER,YANG_Ising},
ice-type models~\cite{Lieb,Sutherland,Yang_ice},
and the dimer model~\cite{KASTELEYN_ORIGINAL,TEMPERLEY_FISHER_ORIGINAL},
see also~\cite{BAXTER}.
These quantitative estimates in turn imply qualitative results,
 for example (non)uniqueness of shift-invariant Gibbs measures,
and (when the samples are height functions) strict convexity of
the surface tension.
Sometimes it is possible to derive qualitative results even in the absence
of quantitative estimates.
Georgii~\cite{GEORGII} provides an excellent overview for the theory of Gibbs
measures for
general (non-integrable) models,
and Sheffield~\cite{SHEFFIELD} derives many key results
for models of height functions in the gradient setting,
including strict convexity of the surface tension in any dimension.

The focus of this paper is a natural generalisation of the hexagonal dimer model (Figure~\ref{fig:2d}\ref{fig:2d:hex}) to arbitrary dimension.
The generalised model
first appeared in the work of
 Linde, Moore, and Nordahl~\cite{Linde}. It also
belongs to the category of models under consideration in the thesis of Sheffield~\cite{SHEFFIELD}.
A great deal is known about the original two-dimensional dimer model: we mention three pivotal developments.
The mathematical study of dimers was initiated by
Kasteleyn, Temperley, and Fisher.
Kasteleyn~\cite{KASTELEYN_ORIGINAL} and Temperley and Fisher~\cite{TEMPERLEY_FISHER_ORIGINAL}
independently calculated the number of perfect matchings of an $n\times m$ grid or,
equivalently, the number of domino tilings of an $n\times m$ rectangle.
Kasteleyn~\cite{KASTELEYN_ORIGINAL_More_graphs,KASTELEYN_in_this_other_book}
later showed that the number of perfect matchings of any
bipartite planar graph equals the determinant of a matrix
that is closely related to the adjacency matrix of the concerned graph.
Cohn, Kenyon, and Propp~\cite{COHN} proved the variational
principle for domino tilings; the scope of their article includes the hexagonal dimer model.
Remarkably, a closed-form solution
for the surface tension is found, something that is not to be expected in higher dimensions.
Their derivation of the closed-form formula relies
on a bijection between dimer configurations and height functions,
together with an
 original application of the Kasteleyn theory.
 Kenyon, Okounkov, and Sheffield~\cite{amoebae} establish a bijection
between the set of accessible slopes and the set of ergodic Gibbs measures.
They furthermore classify the ergodic Gibbs measures into three categories
(frozen, liquid, and gaseous)
which describe qualitatively the behaviour of the random surface.
Their paper contains many more qualitative and quantitative results.


\begin{figure}
\captionsetup{singlelinecheck=off}
\begin{center}
\newcommand{\yheight}{0.866025}
\newcommand{\dax}{0.288675}
\begin{tikzpicture}[x=1cm,y=1cm,scale=12/20.8]

  \begin{scope}
  \node[anchor=north] at (\yheight*5,-0.9) {(a)};
  \begin{scope}
    \clip (\yheight*1.1,-0.9)  rectangle (\yheight*8.9,6.9);
    \newcommand{\loza}[2]{
      \draw[very thin,fill=NavyBlue!10]
        (2*\yheight*#1+\yheight*#2,#2*0.5)--
        (\yheight*2*#1+\yheight+\yheight*#2,#2*0.5-0.5)--
        (\yheight*2*#1+\yheight*2+\yheight*#2,#2*0.5)--
        (\yheight*2*#1+\yheight*#2+\yheight*1,#2*0.5+0.5)--
        cycle;
    }
    \newcommand{\lozb}[2]{
      \draw[very thin,fill=NavyBlue!50]
        (\yheight*2*#1+\yheight*#2+\yheight*1,#2*0.5+0.5)--
        (\yheight*2*#1+\yheight*#2+\yheight*2,#2*0.5)--
        (\yheight*2*#1+\yheight*#2+\yheight*2,#2*0.5+1)--
        (\yheight*2*#1+\yheight*#2+\yheight*1,#2*0.5+1.5)--
        cycle;
    }
    \newcommand{\lozc}[2]{
      \draw[very thin,fill=NavyBlue]
        (\yheight*2*#1+\yheight*#2+\yheight*1,#2*0.5-0.5)--
        (\yheight*2*#1+\yheight*#2+\yheight*2,#2*0.5)--
        (\yheight*2*#1+\yheight*#2+\yheight*2,#2*0.5-1)--
        (\yheight*2*#1+\yheight*#2+\yheight*1,#2*0.5-1.5)--
        cycle;
    }
    \input{figures/2d_data_lozenge0.tex}
  \end{scope}
  \end{scope}

  \begin{scope}[shift={(7.5,0)}]
  \node[anchor=north] at (\yheight*5,-0.9) {(b)};
  \begin{scope}
    \clip (\yheight*1.1,-0.9)  rectangle (\yheight*8.9,6.9);
    \newcommand{\loza}[2]{
      \draw[very thin,gray]
        (2*\yheight*#1+\yheight*#2,#2*0.5)--
        (\yheight*2*#1+\yheight+\yheight*#2,#2*0.5-0.5)--
        (\yheight*2*#1+\yheight*2+\yheight*#2,#2*0.5)--
        (\yheight*2*#1+\yheight*#2+\yheight*1,#2*0.5+0.5)--
        cycle;
      \draw[very thin,dotted,gray]
        (\yheight*2*#1+\yheight*#2+\yheight*1,#2*0.5+0.5)--
        (\yheight*2*#1+\yheight*1+\yheight*#2,#2*0.5-0.5);
    }
    \newcommand{\lozb}[2]{
      \draw[very thin,gray]
        (\yheight*2*#1+\yheight*#2+\yheight*1,#2*0.5+0.5)--
        (\yheight*2*#1+\yheight*#2+\yheight*2,#2*0.5)--
        (\yheight*2*#1+\yheight*#2+\yheight*2,#2*0.5+1)--
        (\yheight*2*#1+\yheight*#2+\yheight*1,#2*0.5+1.5)--
        cycle;
      \draw[very thin,dotted,gray]
        (\yheight*2*#1+\yheight*#2+\yheight*1,#2*0.5+0.5)--
        (\yheight*2*#1+\yheight*#2+\yheight*2,#2*0.5+1);
    }
    \newcommand{\lozc}[2]{
      \draw[very thin,gray]
        (\yheight*2*#1+\yheight*#2+\yheight*1,#2*0.5-0.5)--
        (\yheight*2*#1+\yheight*#2+\yheight*2,#2*0.5)--
        (\yheight*2*#1+\yheight*#2+\yheight*2,#2*0.5-1)--
        (\yheight*2*#1+\yheight*#2+\yheight*1,#2*0.5-1.5)--
        cycle;
      \draw[very thin,dotted,gray]
        (\yheight*2*#1+\yheight*#2+\yheight*1,#2*0.5-0.5)--
        (\yheight*2*#1+\yheight*#2+\yheight*2,#2*0.5-1);
    }
    \newcommand{\cipher}[3]{
      \node at (\yheight*2*#1+\yheight*#2,#2*0.5) {#3};
    }
    \input{figures/2d_data_lozenge0.tex}
    \input{figures/2d_data_height0.tex}
  \end{scope}
  \end{scope}

  \begin{scope}[shift={(15,0)}]
  \node[anchor=north] at (\yheight*5,-0.9) {(c)};
  \begin{scope}
    \clip (\yheight*1.1,-0.9)  rectangle (\yheight*8.9,6.9);
    \newcommand{\fatdot}[2]{
      \fill (#1,#2) circle (0.05*20.8/12);
    }
    \newcommand{\loza}[2]{
      \draw[very thin,gray]
        (2*\yheight*#1+\yheight*#2,#2*0.5)--
        (\yheight*2*#1+\yheight+\yheight*#2,#2*0.5-0.5)--
        (\yheight*2*#1+\yheight*2+\yheight*#2,#2*0.5)--
        (\yheight*2*#1+\yheight*#2+\yheight*1,#2*0.5+0.5)--
        cycle;
      \draw[very thick]
        (\yheight*2*#1+\yheight+\yheight*#2-\dax,#2*0.5)--
        (\yheight*2*#1+\yheight*1+\yheight*#2+\dax,#2*0.5);
      \draw[very thick,dotted]
        (\yheight*2*#1+\yheight+\yheight*#2-\dax,#2*0.5)--
        (2*\yheight*#1+\yheight*#2+0.5*\yheight,#2*0.5+0.25);
      \draw[very thick,dotted]
        (\yheight*2*#1+\yheight+\yheight*#2-\dax,#2*0.5)--
        (2*\yheight*#1+\yheight*#2+0.5*\yheight,#2*0.5-0.25);
      \draw[very thick,dotted]
        (\yheight*2*#1+\yheight+\yheight*#2+\dax,#2*0.5)--
        (2*\yheight*#1+\yheight*#2+1.5*\yheight,#2*0.5+0.25);
      \draw[very thick,dotted]
        (\yheight*2*#1+\yheight+\yheight*#2+\dax,#2*0.5)--
        (2*\yheight*#1+\yheight*#2+1.5*\yheight,#2*0.5-0.25);
      \fill(\yheight*2*#1+\yheight+\yheight*#2-\dax,#2*0.5) circle (0.05*20.8/12);
      \fill(\yheight*2*#1+\yheight+\yheight*#2+\dax,#2*0.5) circle (0.05*20.8/12);
      \fill[color=white](\yheight*2*#1+\yheight+\yheight*#2+\dax,#2*0.5) circle (0.04*20.8/12);
    }
    \newcommand{\lozb}[2]{
      \draw[very thin,gray]
        (\yheight*2*#1+\yheight*#2+\yheight*1,#2*0.5+0.5)--
        (\yheight*2*#1+\yheight*#2+\yheight*2,#2*0.5)--
        (\yheight*2*#1+\yheight*#2+\yheight*2,#2*0.5+1)--
        (\yheight*2*#1+\yheight*#2+\yheight*1,#2*0.5+1.5)--
        cycle;
      \draw[very thick]
        (\yheight*2*#1+2*\yheight+\yheight*#2-\dax,#2*0.5+0.5)--
        (\yheight*2*#1+\yheight+\yheight*#2+\dax,#2*0.5+1);
      \draw[very thick,dotted]
        (\yheight*2*#1+2*\yheight+\yheight*#2-\dax,#2*0.5+0.5)--
        (2*\yheight*#1+\yheight*#2+1.5*\yheight,#2*0.5+0.25);
      \draw[very thick,dotted]
        (\yheight*2*#1+2*\yheight+\yheight*#2-\dax,#2*0.5+0.5)--
        (\yheight*2*#1+2*\yheight+\yheight*#2,#2*0.5+0.5);
      \draw[very thick,dotted]
        (\yheight*2*#1+\yheight+\yheight*#2+\dax,#2*0.5+1)--
        (\yheight*2*#1+\yheight+\yheight*#2,#2*0.5+1);
      \draw[very thick,dotted]
        (\yheight*2*#1+\yheight+\yheight*#2+\dax,#2*0.5+1)--
        (\yheight*2*#1+1.5*\yheight+\yheight*#2,#2*0.5+1.25);
      \fill(\yheight*2*#1+2*\yheight+\yheight*#2-\dax,#2*0.5+0.5) circle (0.05*20.8/12);
      \fill(\yheight*2*#1+\yheight+\yheight*#2+\dax,#2*0.5+1) circle (0.05*20.8/12);
      \fill[color=white](\yheight*2*#1+\yheight+\yheight*#2+\dax,#2*0.5+1) circle (0.04*20.8/12);
    }
    \newcommand{\lozc}[2]{
      \draw[very thin,gray]
        (\yheight*2*#1+\yheight*#2+\yheight*1,#2*0.5-0.5)--
        (\yheight*2*#1+\yheight*#2+\yheight*2,#2*0.5)--
        (\yheight*2*#1+\yheight*#2+\yheight*2,#2*0.5-1)--
        (\yheight*2*#1+\yheight*#2+\yheight*1,#2*0.5-1.5)--
        cycle;
      \draw[very thick]
        (\yheight*2*#1+2*\yheight+\yheight*#2-\dax,#2*0.5-0.5)--
        (\yheight*2*#1+\yheight+\yheight*#2+\dax,#2*0.5-1);
      \draw[very thick,dotted]
        (\yheight*2*#1+2*\yheight+\yheight*#2-\dax,#2*0.5-0.5)--
        (2*\yheight*#1+\yheight*#2+1.5*\yheight,#2*0.5-0.25);
      \draw[very thick,dotted]
        (\yheight*2*#1+2*\yheight+\yheight*#2-\dax,#2*0.5-0.5)--
        (\yheight*2*#1+2*\yheight+\yheight*#2,#2*0.5-0.5);
      \draw[very thick,dotted]
        (\yheight*2*#1+\yheight+\yheight*#2+\dax,#2*0.5-1)--
        (\yheight*2*#1+\yheight+\yheight*#2,#2*0.5-1);
      \draw[very thick,dotted]
        (\yheight*2*#1+\yheight+\yheight*#2+\dax,#2*0.5-1)--
        (\yheight*2*#1+1.5*\yheight+\yheight*#2,#2*0.5-1.25);
      \fill(\yheight*2*#1+2*\yheight+\yheight*#2-\dax,#2*0.5-0.5) circle (0.05*20.8/12);
      \fill(\yheight*2*#1+\yheight+\yheight*#2+\dax,#2*0.5-1) circle (0.05*20.8/12);
      \fill[color=white](\yheight*2*#1+\yheight+\yheight*#2+\dax,#2*0.5-1) circle (0.04*20.8/12);
    }
    \input{figures/2d_data_lozenge0.tex}
  \end{scope}
  \end{scope}
\end{tikzpicture}
\vspace{1em}
\caption[Dimension $d=2$]{Dimension $d=2$; projections onto $H$ of several representations:
\begin{enumerate}[label=(\alph*),ref=\alph*]
  \item\label{fig:2d:geo} As a stepped surface or lozenge tiling,
  \item\label{fig:2d:hf} As a height function on the simplicial lattice $(X^d,E^d)$,
  \item\label{fig:2d:hex} As a dimer cover or perfect matching of the dual hexagonal lattice.
\end{enumerate}
}
\label{fig:2d}
\end{center}
\end{figure}

\subsection{The honeycomb dimer model in dimension $d\geq 2$}
Let $(X^d,E^d)$ denote the graph obtained from the square lattice $\mathbb Z^{d+1}$
by identifying vertices which differ by an integer multiple of the vector
$\mathbf n:=\mathbf e_1+\dots+\mathbf e_{d+1}$.
This graph is called the \emph{simplicial lattice};
its vertices are equivalence classes of vertices of the square lattice.
Let $\Omega$ denote the set of functions $f:X^d\to\mathbb Z$
which have the property that $f(\0)\in(d+1)\mathbb Z$
and
$
f([\x+\e_i])-f([\x])\in \{-d,1\}
$
for any $\x\in\mathbb Z^{d+1}$ and $1\leq i\leq d+1$.
Functions in $\Omega$ are called \emph{height functions}---see Figure~\ref{fig:2d}\ref{fig:2d:hf}.
The set $\Omega$ is in bijection with the set of \emph{stepped surfaces}
in $\mathbb R^{d+1}$.
Informally, a stepped surface is a union of unit hypercubes with
integer coordinates, such that each hypercube is well-supported, and such that there is no overhang.
Stepped surfaces are related directly
to the three-dimensional interpretation of the familiar picture of lozenge tilings for $d=2$,
see Figure~\ref{fig:2d}\ref{fig:2d:geo}.
Each stepped surface is furthermore associated with a \emph{tiling}
of the hyperplane orthogonal to $\n$;
this tiling is essentially obtained by projecting the exposed
faces of the hypercubes of the stepped surface onto this hyperplane.
These bijections are all introduced in the work of
Linde, Moore, and Nordahl~\cite{Linde}.

In this article we are interested in the model of uniformly random height functions
whenever fixed or periodic boundary conditions are enforced.
For fixed boundary conditions, it is also possible to consider
more general Boltzmann measures, in the spirit of the classical planar dimer model.
The purpose of this article is to point out a number of structural properties of the generalised model
which lead to new results.

\subsection{The double dimer model and the cluster swap}
For the double dimer model, one superimposes two dimer configurations of the same graph.
The union of two such dimer covers decomposes into a number of isolated
edges which appear in both dimer covers, called \emph{double edges},
and a number of closed loops of even length, where each edge of the loop
is contained in exactly one of the original dimer covers.
If the distribution of the two dimer configurations is uniformly random,
subject say to fixed boundary conditions,
then the orientation of each loop is uniformly random in its two states.
More precisely, this means that for each loop, one can flip a fair coin
to decide if each dimer should change the configuration that it belongs to,
without changing the distribution of the product measure.
Many results have been obtained for the double dimer model:
see the work of Kenyon and Pemantle~\cite{Kenyon2014,KENYON201627} for the relevant literature.

Sheffield introduces \emph{cluster swapping}
in the seminal monograph \emph{Random surfaces}~\cite{SHEFFIELD}.
The technique employs the same idea of considering two configurations---height
functions, in this case---at once, then identifying and resampling independent
structures.
The cluster swap applies to \emph{simply attractive potentials},
that is, models of height functions which are induced by
a convex nearest-neighbour potential.
The setup is much more intricate than for the original double dimer model.
To compensate for the possible change in potential,
one first identifies a ferromagnetic Ising model,
then achieves independence through the
Edwards-Sokal representation~\cite{ES}
which in turn derives from the Swendsen-Wang update~\cite{SW}.
The cluster swap applies directly to the heights of the two height
functions; it is not an operation on their gradients.

\subsection{Main results}
First, we develop a new construction, the \emph{cluster boundary swap}, for the generalised model.
This cluster boundary swap is entirely analogous to the resampling operation
in the double dimer model.
In particular, the difference of two uniformly random height functions
decomposes as a geometrical structure consisting of
boundaries---which generalise loops---and double edges,
and, conditional on this structure, a number of fair coin flips, one for the orientation of each boundary.
Moreover, the operation directly manipulates the gradients of the two height functions,
which works to our advantage.
The name \emph{cluster boundary swap} is chosen intentionally,
as it should be considered a special case of the cluster swap,
adapted and optimised for the special geometrical structure that
is present in the generalisation of the honeycomb dimer model to higher dimensions.
The author is not aware of a similar construction for any other model in dimension
$d>2$; in particular, no generalisation is known for the dimer model
on the two-dimensional square lattice.

Second, we use the cluster boundary swap to obtain an identity
which relates the covariance matrix of the random height function $f$
to the geometrical structure of the model.
We prove that the variance of $f$ at a vertex $\x\in X^d$
is exactly $\frac12 (d+1)^2$ times the expectation
of the number of boundaries separating $\x$ from infinity in the product measure.
The identity makes sense for fixed boundary conditions only,
but it does apply to general Boltzmann measures
(which includes the uniform probability measure).
Similarly, for $\x,\y\in X^d$, we prove that the covariance between $f$
at $\x$ and $\y$ equals $\frac12 (d+1)^2$
times the number of boundaries that separate both $\x$
and $\y$ from infinity.
This identity is new and was previously known, to our knowledge, only for two-dimensional
dimer models.


Third, we prove that the set of tilings, as introduced in~\cite{Linde},
are in one-to-one correspondence with the perfect matchings of a hypergraph
which is the natural dual of the simplicial lattice $(X^d,E^d)$.
This hypergraph is $d!$-regular and $d!$-partite,
in the sense that there is a partition of its vertex set
into $d!$-parts, such that each hyperedge contains exactly one
vertex in each part.
We derive a generalised Kasteleyn theory:
we prove that the partition function of any Boltzmann measure
equals the Cayley hyperdeterminant of the adjacency hypermatrix of
this hypergraph.
It is well-known that no Kasteleyn weighting
is necessary for the Kasteleyn theory on the hexagonal lattice.
We prove the same here: we may take the adjacency hypermatrix
without modifications as our Kasteleyn hypermatrix.
Unfortunately, this does not by the knowledge of the author
lead to an explicit formula for the surface tension,
nor to an expression for dimer-dimer correlations in terms of an inverse of the Kasteleyn hypermatrix.

Finally, we use the special properties of the cluster
boundary swap---relative to the more general cluster swap---to
greatly streamline the proof in~\cite{SHEFFIELD}
for strict convexity of the surface tension.
Note that strict convexity of the surface tension for $d=2$ was first derived
in~\cite{COHN} by means of an explicit calculation.
Strict convexity of the surface tension is important,
because it implies that the model is stable on a macroscopic scale.
The macroscopic behaviour of the model is described by a large devations principle,
which in turn implies a variational principle---these are due to generic arguments
which do not require that the surface tension is strictly convex.
If the surface tension is strictly convex, however,
then the rate function in the large deviations principle
has a unique minimizer, and consequently the random height function
concentrates around a unique limit shape.
See~\cite{COHN,MR2358053,kenyonlimit} for the variational principle in the original dimer setting.

\subsection{Structure of the article}

Section~\ref{sec:formal_intro} formally introduces the model,
and gives an overview of the several representations of each sample.
These constructions derive directly from~\cite{Linde}.
Section~\ref{sec:random_height_functions} describes the probability
measures under consideration, by enforcing fixed or periodic boundary conditions.
We construct the cluster boundary swap in Section~\ref{sec:CBS}.
In Section~\ref{sec:varcovar}, we state and prove the identity for the covariance structure
of the random height function,
and Section~\ref{sec:Kasteleyn} contains the generalised Kasteleyn theory.
The purpose of the remainder of the article
is to motivate and prove the result of strict convexity of the surface tension.
In Section~\ref{sec:gibbs} we introduce gradient Gibbs measures,
which play an important role in the proof,
but which are also interesting in their own right.
Section~\ref{sec:Statement of the variational principle}
motivates the result:
it introduces the surface tension,
and describes how it is related to the large deviations principle and the variational
principle.
Strict convexity of the surface tension is finally proven in Section~\ref{sec:strict}.



\section{Stepped surfaces, tilings, height functions}
\label{sec:formal_intro}

Linde, Moore, and Nordahl~\cite{Linde} observed that each sample of the model
has three natural representations.
The purpose of this section is to give an overview of these representations,
and to state or derive some basic properties.
For details, refer to~\cite{Linde}.
Throughout this article, $d\geq 2$ denotes the fixed dimension that we work in.

\subsection{Stepped surfaces}
Informally, a stepped surface is a union of unit hypercubes with corners in $\mathbb Z^{d+1}$.
The hypercubes must be properly stacked without overhang---recall for this notion the familiar picture of lozenge tilings, which corresponds to $d=2$.
If a hypercube is present at some coordinate $\mathbf x\in\mathbb Z^{d+1}$,
then we require the presence of a hypercube at $\mathbf x-\mathbf e_i$
for every $1\leq i\leq d+1$;
this indeed ensures that every hypercube is well-supported,
and excludes overhang.

The formal definition is as follows.
If $\x,\y\in\mathbb R^{d+1}$,
then write $\x\leq\y$ whenever $\x_i\leq\y_i$
for all $i$.
A set $A\subset\R^{d+1}$
is called \emph{closed under $\leq$} whenever
$\x\in A$ and $\y\leq\x$ implies $\y\in A$.
Write $L(A)$
for the closure of a set $A\subset\R^{d+1}$
under $\leq$,
that is,
\[
  L(A):=\{\y\in\R^{d+1}:\text{$\y\leq \x$ for some $\x\in A$}\}.
\]
A \emph{stepped surface} is a strict nonempty subset
of $\R^{d+1}$ of the form $L(A)$
for some $A\subset\Z^{d+1}$.
Let $\Psi$ denote the set of stepped surfaces.
If $F$ is a stepped surface, then write $V(F)$
for the set $\partial F\cap\mathbb{Z}^{d+1}$.
The set $V(F)$ should be thought of as the discrete boundary of $F$.
It is a simple exercise to work out that $F=L(V(F))$. In particular,
each stepped surface is characterised by this discrete boundary.

  \subsection{The height function of a stepped surface}
  \label{subsec:geo_overview:height_function}
  Consider a stepped surface $F$.
  The height function associated to $F$ is essentially a function that has the discrete boundary $V(F)$
  of $F$ as its graph.
  The value of this function at each vertex $\x\in V(F)$ is given by $\x_1+\cdots+\x_{d+1}=(\x,\n)$,
  where $\n$ is the vector $\e_1+\cdots+\e_{d+1}$
  and $(\cdot,\cdot)$ the natural inner product.
  This value is also called the \emph{height} of the vertex $\x$.

  Write $(X^d,E^d)$ for the graph obtained from the square lattice
  $\Z^{d+1}$ after identifying all vertices which differ by an integer multiple
  of $\n$.
  In particular, each vertex $[\x]\in X^d$ is an equivalence class
  of the form $[\x]:=\x+\Z\n$
  for some $\x\in\Z^{d+1}$.
  The $2d+2$ neighbours of $[\x]$ are of the form $[\x\pm\e_i]$.
  The graph $(X^d,E^d)$ is called the \emph{simplicial lattice}.
  It is a simple exercise to derive from the definition of a stepped
  surface that $|[\x]\cap V(F)|=1$ for any $[\x]\in X^d$.
  To derive this, observe first that $\x+\R\n$ intersects $\partial F$ once because $F$ is
  closed under $\leq$,
  then note that this point of intersection has integral coordinates
  whenever
  $\x$ does.

  The \emph{height function} associated to
  $F$ is given by the function
  \[
    f:X^d\to\Z,\,[\x]\mapsto(\y,\n)~\text{where}~\{\y\}=[\x]\cap V(F).
  \]
  This is the desired parametrisation of $V(F)$;
  the set $V(F)$ is equal to
  \[
    V(f):=\{\x\in\Z^{d+1}:(\x,\n)=f([\x])\}.
  \]
  The assignment $F\mapsto f$ is injective because
  $F\mapsto V(F)$ is injective.

  Let us now identify the image of the map $F\mapsto f$.
  The function $f$ satisfies
  $f([\x])\equiv (\x,\n)\mod d+1$;
  we call this the \emph{parity condition}.
  Assert furthermore that $f([\x+\e_i])\leq f([\x])+1$.
  This assertion is called the \emph{Lipschitz condition}.

  Suppose that the assertion is false, that is, that instead $f([\x+\e_i])>f([\x])+1$.
  Write $\y$ for the unique vertex in $[\x]\cap V(F)$,
  and write $\z$ for the unique vertex in $[\x+\e_i]\cap V(F)$.
  Then $\z_j> (\y+\e_i)_j\geq \y_j$ for all $j$.
  In this case, it is impossible that both $\z$ and $\y$ are
  contained in $\partial F$, because
  $F$ is closed under $\leq$.
  This proves the assertion.

  The Lipschitz condition also implies that $f([\x+\e_i])\geq f([\x])-d$,
  since $[\e_1+\cdots+\e_{d+1}]=[\n]=[\0]$.
  The Lipschitz condition, this new inequality, and the parity condition imply jointly that
  the gradient $\nabla f$ of $f$ satisfies
  \[
  \nabla f([\x],[\x+\e_i]):=f([\x+\e_i])-f([\x])\in\{-d,1\}.
  \]
  On the other hand, if the flow $\nabla f$
  satisfies this equation for any $\x $ and $i$ and if $f([\0])$
  is an integer multiple of $d+1$,
  then $f$ also clearly satisfies the parity condition.

  A \emph{height function} is a function $f:X^d\to\Z$
  which satisfies $f([\0])\in(d+1)\Z$
  and
  \begin{equation*}
    \nabla(f([\x]),f([\x+\e_i]))=f([\x+\e_i])-f([\x])\in\{1,-d\}
  \end{equation*}
  for all $\x$ and $i$;
  write $\Omega$ for the set of height functions.
  If $F$ is a stepped surface,
  then the associated height function $f$ is indeed an element of $\Omega$.
  The injective map $\Psi\to\Omega,\,F\mapsto f$ is in fact
  a bijection; its inverse is given by
  the map $\Xi:\Omega\to\Psi,\,f\mapsto L(V(f))$.
For details, refer to~\cite{Linde}.

  \subsection{The tiling associated to a stepped surface}
  \label{subsection:tiling_of_a_stepped_surface}
  If $f$ is a height function, then write
\[
T(f):=\{\{\x,\y\}\in E^d:\nabla f(\x,\y)=-d\}.\]
The set $T(f)$ characterises $\nabla f$,
and therefore it characterises the function $f$ up to constants.
The first goal is to characterise the image of the map
$f\mapsto T(f)$ over $\Omega$.

A path $(\s_k)_{0\leq k\leq n}\subset X^d$ of length $n=d+1$ is called a \emph{rooted simplicial loop}
or simply a \emph{simplicial loop}
if there exists a permutation $\xi\in S_{d+1}$ such that
$\s_k=\s_{k-1}+\e_{\xi(k)}$ for any $1\leq k\leq d+1$.
This implies that $\s$ is closed because $[\x]+\e_1+\dots+\e_{d+1}=[\x+\n]=[\x]$.
Write $R^d$ for the set of rooted simplicial loops.
Sometimes we are not concerned with the starting points of the loops.
In those cases, two loops are considered equal if they are equal up to indexation---this is equivalent to requiring that the two loops traverse the same set of edges.
Write $U^d$ for the set of \emph{unrooted simplicial loops}.

Let $\s$ denote a simplicial loop.
Knowing that $\nabla f$ must integrate to zero along this simplicial loop, it is immediate
that exactly one of the edges of $\s$ is contained in $T(f)$.
Write $\Theta$ for the set of \emph{tilings},
that is, the set of subsets $T\subset E^d$
with the property that $|T\cap\s|=1$
for any simplicial loop $\s$.
To see that the map $\Omega\to\Theta,\,f\mapsto T(f)$
is surjective, let $T\in\Theta$,
and write $\alpha_T$ for the unique flow on $(X^d,E^d)$
such that
\begin{equation}
  \label{eq:definition_of_a_flow}
\alpha_T([\x],[\x+\e_i])=
\begin{cases}
  1&\text{if $\{[\x],[\x+\e_i]\}\not\in T$,}\\
  -d&\text{if $\{[\x],[\x+\e_i]\}\in T$.}
\end{cases}
\end{equation}
Then $\alpha_T=\nabla f$ for some height function $f\in\Omega$
if and only if $\alpha_T$ is conservative.
The flow $\alpha_T$ integrates to zero along any simplicial loop,
by definition of a tiling.
It is a simple exercise in group theory to see that this implies
that $\alpha_T$ integrates to zero along any closed path,
since the graph $(X^d,E^d)$ can be written as the Cayley graph
on the generators $\e_1,\dots,\e_{d+1}$
subject to the relators which are given by all possible permutations of these $d+1$
elements---these relators correspond exactly to the simplicial loops.
This proves that $T=T(f)$ for some height function $f\in\Omega$,
and therefore the map $\Omega\to\Theta,\,f\mapsto T(f)$ is surjective.
Since $T(f)$ characterises $f$ up to constants,
this also implies that the map $\Phi:\Omega\to (d+1)\Z\times \Theta,\, f\mapsto (f(0),T(f))$
is a bijection.

\subsection{The geometrical intuition behind tilings}
Let us connect the combinatorial tilings
$T\in\Theta$ to the familiar geometric picture of lozenge tilings.
If $d=2$, then $(X^d,E^d)$ is the triangular lattice,
and the set $T\subset E^d$ has the property that $T$
contains exactly one edge of every triangle.
Thus, if we remove the set $T$ from this triangular lattice,
then we are left with a collection of lozenges.

The boundary $\partial F\subset\R^{d+1}$ of a stepped surface is a union of
hypercubes of codimension one with integral coordinates.
Write $H$ for the orthogonal complement of $\n$,
and write $P:\R^{d+1}\to H$ for orthogonal projection onto
$H$.
For each stepped surface $F$, the projection map $P$
restricts to a bijection from $\partial F$ to $H$.
If $d=2$, then $P$ maps each two-dimensional square
 contained in $\partial F$,
to a lozenge embedded in $H$,
and jointly these lozenges partition $H$---we ignore here the fact that the topological boundaries of the lozenges overlap.
We observed in the previous paragraph that each edge in $T$
encodes exactly one of these lozenges.
The same reasoning applies to higher dimensions:
the map $P$ maps the hypercubes of codimension
$1$ which make up $\partial F$ to $H$,
and these projected hypercubes partition the $d$-dimensional real vector
space $H$ up to overlapping boundaries.
Finally, each edge in $T$ encodes exactly one of these projected
hypercubes.

\subsection{Lemmas for analysing stepped surfaces}

\subsubsection{The map $\Xi$ preserves the lattice structure}

\begin{lemma}
  \label{lemma:aux}
  Let $f_1$ and $f_2$ be height functions and let $F_1=\Xi(f_1)$ and $F_2=\Xi(f_2)$.
  Then
  \begin{enumerate}
    \item
    \label{lemma:aux:orderwell}
    $f_1\leq f_2$ if and only if $F_1\subset F_2$,
    \item $f_1\vee   f_2$ is a height function,
    $F_1\cup F_2$ is a stepped surface, and $\Xi(f_1\vee f_2)=F_1\cup   F_2$, \label{lemma:aux_1}
    \item
    \label{lemma:aux_2}
    $f_1\wedge f_2$ is a height function, $F_1\cap F_2$ is a stepped surface, and $\Xi(f_1\wedge f_2)=F_1\cap F_2$.
  \end{enumerate}
  Of course,~\ref{lemma:aux_1} and~\ref{lemma:aux_2} extend to finite unions, intersections, maxima, and minima.
\end{lemma}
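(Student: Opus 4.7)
The plan is to exploit the fiber description of a stepped surface. For each $\x \in \Z^{d+1}$ the line $\x + \R\n$ meets $F_i$ in a downward ray whose upper endpoint is the unique integer point $\pi_i([\x]) \in [\x] \cap V(f_i)$, and this upper endpoint satisfies $(\pi_i([\x]),\n) = f_i([\x])$. Consequently, since $\pi_1([\x])$ and $\pi_2([\x])$ lie on the same fiber, $\pi_1([\x]) \leq \pi_2([\x])$ is equivalent to $f_1([\x]) \leq f_2([\x])$, and a lattice point $\x$ belongs to $F_i$ if and only if $(\x,\n) \leq f_i([\x])$. Part~1 follows easily: if $f_1 \leq f_2$ and $\y \in F_1$, write $\y \leq \x$ with $\x \in V(f_1)$ and note $\x \leq \pi_2([\x]) \in V(f_2)$, so $\y \in F_2$; conversely, if $F_1 \subset F_2$, then $\pi_1([\x]) \in V(f_1) \subset F_2$ gives $(\pi_1([\x]),\n) \leq f_2([\x])$, i.e.\ $f_1([\x]) \leq f_2([\x])$.

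Next I verify that $g := f_1 \vee f_2$ and $h := f_1 \wedge f_2$ lie in $\Omega$. The parity condition $g([\0]), h([\0]) \in (d+1)\Z$ is inherited from $f_1,f_2$. For the gradient condition, set $a_j := f_j([\x])$, $a_j' := f_j([\x + \e_i])$, and $\delta_j := a_j' - a_j \in \{-d, 1\}$. Since $a_1 \equiv a_2 \equiv (\x,\n) \pmod{d+1}$, the difference $a_1 - a_2$ is a multiple of $d+1$, while $|\delta_1 - \delta_2| \leq d+1$, so if $a_1 > a_2$ then $a_1' \geq a_2'$ as well. A short case split on the sign of $a_1 - a_2$ then shows $\max(a_1', a_2') - \max(a_1, a_2) \in \{\delta_1, \delta_2, \max(\delta_1,\delta_2)\} \subset \{-d, 1\}$; the minimum case is symmetric.

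Finally, the identifications $\Xi(g) = F_1 \cup F_2$ and $\Xi(h) = F_1 \cap F_2$, together with the statements that these sets are themselves stepped surfaces, I would deduce from part~1 together with the bijection $\Omega \leftrightarrow \Psi$. For the intersection: part~1 applied to $h \leq f_i$ gives $\Xi(h) \subset F_1 \cap F_2$, so the latter is nonempty; it is a strict subset of $\R^{d+1}$ because $F_1$ is; and the elementary identity $F_1 \cap F_2 = L(\{\x_1 \wedge \x_2 : \x_i \in V(f_i)\})$ shows $F_1 \cap F_2 = L(A)$ for some $A \subset \Z^{d+1}$, hence it is a stepped surface. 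Letting $g'$ be its height function, part~1 applied to $\Xi(g') \subset F_i$ gives $g' \leq f_i$, hence $g' \leq h$; while $\Xi(h) \subset F_1 \cap F_2 = \Xi(g')$ gives $h \leq g'$; so $g' = h$. The union case is analogous, starting from $F_1 \cup F_2 = L(V(f_1) \cup V(f_2))$ and using part~1 applied to $f_i \leq g$. The main obstacle is the case analysis of the second paragraph, but the parity constraint makes it routine.
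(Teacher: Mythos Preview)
Your proof is correct. The paper does not give its own argument here; it simply cites Lemma~2 of Linde--Moore--Nordahl, so there is no in-paper proof to compare against. Your approach is self-contained and matches the natural strategy: the fiberwise description $\x\in F_i\Leftrightarrow(\x,\n)\le f_i([\x])$ immediately yields part~1, the parity observation $a_1\equiv a_2\pmod{d+1}$ forces the order of $f_1,f_2$ to be preserved across each edge and hence gives $f_1\vee f_2,\,f_1\wedge f_2\in\Omega$, and the identifications $\Xi(f_1\wedge f_2)=F_1\cap F_2$, $\Xi(f_1\vee f_2)=F_1\cup F_2$ then follow from part~1 and the bijection $\Omega\leftrightarrow\Psi$ by a sandwich argument. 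One small remark: in the union case you rely on $g:=f_1\vee f_2\in\Omega$ (from your second paragraph) to conclude via $F_1\cup F_2\subset\Xi(g)$ that $F_1\cup F_2$ is a \emph{strict} subset of $\R^{d+1}$; this dependence is fine but worth stating explicitly, since otherwise the stepped-surface property of $F_1\cup F_2$ is not yet complete at that point.
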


See Lemma~2 in~\cite{Linde} for a proof.

\subsubsection{Local moves}\label{subsec:localmove}
If two height functions agree
at all but one vertex, then it is said that they differ by a  \emph{local move}.
A local move is equivalent to adding or removing a single unit hypercube to the corresponding stepped surface.

\begin{lemma}
  \label{lem:localMove}
  Suppose that $R\subset X^d$ is finite
  and that $f$ and $g$ are height functions that are equal outside
  $R$ and satisfy $f\leq g$ on $R$.
  Then there exists a sequence of height functions
  $(f_k)_{0\leq k\leq n}\subset \Omega$
  with
  \begin{enumerate}
    \item $f_0=f$ and $f_n=g$,
    \item for every $0\leq k<n$, there is a unique
    $\mathbf x\in R$ such that
    $f_{k+1}=f_k+(d+1)\cdot 1_{\mathbf x}$.
  \end{enumerate}
  The sequence is increasing and all functions $f_k$ agree
  to $f$ and $g$ on $X^d\smallsetminus R$.
\end{lemma}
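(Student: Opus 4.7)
The plan is to induct on the nonnegative integer
$n := \sum_{[\x]\in R}(g([\x]) - f([\x]))/(d+1)$, which lies in $\Z_{\geq 0}$ by the parity condition; the base case $n = 0$ gives $f = g$ and the empty sequence. For $n \geq 1$, I would produce a vertex $[\x^*] \in R$ at which $f_1 := f + (d+1) \cdot 1_{[\x^*]}$ is again a height function satisfying $f \leq f_1 \leq g$, and close the induction by applying the hypothesis to $(f_1, g)$.

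To locate $[\x^*]$, I pass to the stepped-surface picture. Set $F_1 := \Xi(f)$ and $F_2 := \Xi(g)$; by Lemma~\ref{lemma:aux}\ref{lemma:aux:orderwell} we have $F_1 \subseteq F_2$. The set $S := \Z^{d+1} \cap (F_2 \smallsetminus F_1)$ is finite because $f = g$ off the finite set $R$, and nonempty because $n \geq 1$. Pick $\x \in S$ minimal in the componentwise order $\leq$ on $\Z^{d+1}$, and set $[\x^*] := [\x]$. For each $i$, the lattice points $\x - \e_i$ and $\x - \n$ lie strictly below $\x$ and inside $F_2$, so minimality of $\x$ forces them into $F_1$; the inclusion $\x - \n \in F_1$, combined with $\x \notin F_1$ and the parity condition, pins $(\x, \n) = f([\x^*]) + (d+1)$. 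The crucial additional observation is that $\x - \e_i + \n = \x + \sum_{j \neq i} \e_j \geq \x$ componentwise, so since $F_1$ is closed under $\leq$ and $\x \notin F_1$, the point $\x - \e_i + \n$ is not in $F_1$ either; this upgrades $\x - \e_i \in F_1$ to $\x - \e_i \in V(F_1)$, and the uniqueness of $V(F_1) \cap [\x^* - \e_i]$ pins down $f([\x^* - \e_i]) = f([\x^*]) + d$ for every $i$.

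These incoming gradients of $-d$ at $[\x^*]$, together with the fact that every simplicial loop carries exactly one edge of gradient $-d$ (since $\nabla f$ integrates to zero around the loop), force $f([\x^* + \e_i]) - f([\x^*]) = +1$ for every $i$. The two gradient conditions together are exactly what is required for $f_1 := f + (d+1) \cdot 1_{[\x^*]}$ to satisfy the Lipschitz condition on every edge incident to $[\x^*]$, so $f_1 \in \Omega$. Translating back, $F_1' := \Xi(f_1)$ is obtained from $F_1$ by adding a single unit hypercube at $\x$, i.e.\ $V(F_1') = (V(F_1) \smallsetminus \{\x - \n\}) \cup \{\x\}$. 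Since $\x \in F_2$ and $F_2$ is closed under $\leq$, we have $F_1' \subseteq F_2$, and a second application of Lemma~\ref{lemma:aux}\ref{lemma:aux:orderwell} gives $f_1 \leq g$. As $n$ strictly decreases, the induction terminates.

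The main obstacle is the upgrade from $\x - \e_i \in F_1$ to $\x - \e_i \in V(F_1)$: minimality of $\x$ in $S$ yields only the former, and the key ingredient that converts this into precise information about the discrete boundary is the componentwise inequality $\x - \e_i + \n \geq \x$. Once the two gradient conditions at $[\x^*]$ are in place, the remaining bookkeeping---that $[\x^*] \in R$ because $f([\x^*]) < g([\x^*])$, that the sequence is increasing, and that every $f_k$ agrees with $f$ and $g$ off $R$---is automatic.
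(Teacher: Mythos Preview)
Your proof is correct. The paper does not supply its own argument for this lemma; it simply refers to Lemma~3 in~\cite{Linde}. Your induction on $n=\sum_{[\x]\in R}(g([\x])-f([\x]))/(d+1)$, with the minimal element of $S=\Z^{d+1}\cap(F_2\smallsetminus F_1)$ furnishing the next cube to add, is the natural approach in the stepped-surface framework and is almost certainly what~\cite{Linde} does as well. The one step worth flagging as the real content---and you identify it yourself---is the upgrade from $\x-\e_i\in F_1$ to $\x-\e_i\in V(F_1)$ via the observation $\x-\e_i+\n\geq\x\notin F_1$; everything else is routine.
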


See Lemma~3 in~\cite{Linde} for a proof.
If $f$ and $g$ agree outside $R$ but neither $f\leq g$ nor $f\geq g$,
then one can first go down from $f$ to $f\wedge g$ and then up from $f\wedge g$
to $g$. Lemma~\ref{lemma:aux} ensures that $f\wedge g$ is a height function.

\subsubsection{The Kirszbraun theorem}
Consider a vertex $[\x]\in X^d$.
Since all elements in $[\x]$ differ from one another by multiples
of $\n$, and since $\n\in\operatorname{Ker}P$,
there is a unique element $\y\in H$
such that $\{\y\}=P[\x]$.
Let us identify each vertex $[\x]\in X^d$
with this corresponding point $\y\in H$.
Write $\g_i:=P\e_i=\e_i-\n/(d+1)$.
The neighbours of $\x\in X^d\subset H$
are then given by $\x\pm\g_i$.

Define the asymmetric norm $\|\cdot\|_+$ on $H$ by
$\|\mathbf x\|_+:=-(d+1)\min_i \mathbf x_i$.
Remark that $\|\cdot\|_+$ is the largest asymmetric norm on $H$
subject to $\|\g_i\|_+\leq 1$ for all $i$.
In other words, a function
$f:X^d\to\R$ satisfies the Lipschitz condition introduced in Subsection~\ref{subsec:geo_overview:height_function}
if and only if
\begin{equation}
  \label{eq:Lipschitz_generalized}
  f(\y)-f(\x) \leq \|\y-\x\|_+
\end{equation}
for all $\x,\y\in X^d$.
A function $f:D\to\R$ defined on a subset $D\subset H$
is called \emph{Lipschitz} whenever $f$ satisfies~\eqref{eq:Lipschitz_generalized}
for all $\x,\y\in D$.
This definition is consistent with the previous definition of the Lipschitz property
for height functions.

\begin{lemma}
  \label{lemma:HFisLIP}
    If a function $f:H\to\mathbb R$ is Lipschitz then there exists
      a unique largest height function $g$ subject to $g\leq f|_{X^d}$;
      the value of $g$ at each
       $\mathbf x\in X^d$ is given by $g(\mathbf x):=k$,
      where $k$ is the largest integer which
      makes $g$ satisfy the parity condition at $\x$,
      and which does not exceed $f(\x)$.
\end{lemma}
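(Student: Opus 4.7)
The plan is to define $g$ pointwise by the formula given in the statement, then verify that $g\in\Omega$ and that it dominates every height function lying below $f|_{X^d}$. The second point is essentially immediate: if $h\in\Omega$ satisfies $h\le f|_{X^d}$, then at every $\x\in X^d$ the integer $h(\x)$ satisfies the parity condition at $\x$ and lies at or below $f(\x)$, so by the definition of $g(\x)$ as the largest such integer, $h(\x)\le g(\x)$. Uniqueness of the maximal element is automatic.

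The main work is to verify that the pointwise definition produces an element of $\Omega$. The parity condition at $\0$ gives $g(\0)\in(d+1)\Z$ for free. For the gradient condition, I would fix an edge $\{[\x],[\x+\e_i]\}\in E^d$ and show that
\[
  \nabla g([\x],[\x+\e_i])\in\{-d,1\}.
\]
The strategy is two-pronged. First, the definition of $g$ gives
\[
  f([\x])-(d+1)< g([\x])\le f([\x]),\qquad f([\x+\e_i])-(d+1)< g([\x+\e_i])\le f([\x+\e_i]).
\]
Combining these two-sided bounds with the Lipschitz estimates $f([\x+\e_i])-f([\x])\le\|\g_i\|_+=1$ and $f([\x])-f([\x+\e_i])\le\|-\g_i\|_+=d$, which follow from the definition of $\|\cdot\|_+$ applied to $\g_i=\e_i-\n/(d+1)$, one obtains
\[
  -2d-1< \nabla g([\x],[\x+\e_i])< d+2,
\]
so the integer difference lies in $[-2d,d+1]$. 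Second, by the parity condition, $\nabla g([\x],[\x+\e_i])\equiv 1\pmod{d+1}$. The only integers in $[-2d,d+1]$ that are congruent to $1$ modulo $d+1$ are $-d$ and $1$, which gives the gradient condition.

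The one subtlety worth flagging is the correct computation of $\|\pm\g_i\|_+$: one needs $\|\g_i\|_+=1$ (attained at any coordinate $j\neq i$) and $\|-\g_i\|_+=d$ (attained at $j=i$), since these two asymmetric norms are what produce the asymmetric interval $[-d,1]$ for $f$-differences, matching the allowed range for height-function gradients. Once this is set up the rest is a one-line arithmetic check. I do not anticipate a genuine obstacle; the proof is essentially a Kirszbraun-style pointwise construction adapted to the discrete parity constraint, and the geometry of $\|\cdot\|_+$ has been engineered precisely so that the reduction from the Lipschitz interval $[-d,1]$ to the two allowed slopes $\{-d,1\}$ takes place automatically.
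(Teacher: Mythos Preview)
Your proof is correct; the paper leaves this lemma's proof to the reader, and your argument is exactly the natural one. The pointwise definition via parity-class floor, followed by the two-sided bound $-2d-1<\nabla g<d+2$ from the Lipschitz inequalities $\|\g_i\|_+=1$ and $\|-\g_i\|_+=d$, then reduction modulo $d+1$, is precisely how the construction is meant to go.
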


\label{pagerefLipfloor}
If $f:H\to\mathbb R$ is a Lipschitz function,
then write $\lfloor f\rfloor$ for the largest height function subject to $\lfloor f\rfloor\leq f|_{X^d}$;
its function values are given by the previous lemma.
We leave its proof to the reader.
The previous lemma results in a discrete analogue of the Kirszbraun theorem for the current setting.

\begin{lemma}
  \label{lem:NEW_Kirszbraun}
If $R\subset H$ and $f:R\to \mathbb R$ is Lipschitz,
then $f$ extends to a Lipschitz function $\bar f:H\to \mathbb R$.
If $R\subset X^d$ and $f:R\to \mathbb Z$ is Lipschitz and satisfies
the parity condition
for every $\mathbf x\in R$,
then $f$ extends to a height function $\bar f\in\Omega$.
\end{lemma}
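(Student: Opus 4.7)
The plan is to treat the two assertions in sequence, reducing the second to the first together with Lemma~\ref{lemma:HFisLIP}. The basic ingredient throughout is subadditivity of the asymmetric norm, $\|\mathbf a+\mathbf b\|_+\leq\|\mathbf a\|_++\|\mathbf b\|_+$, which is immediate from $-\min_i(\mathbf a_i+\mathbf b_i)\leq -\min_i\mathbf a_i-\min_i\mathbf b_i$ and which yields the triangle inequality $\|\y-\x\|_+\leq\|\y-\y'\|_++\|\y'-\x\|_+$ for any $\x,\y,\y'\in H$.

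For the first statement, I propose the standard McShane--Kirszbraun infimum-convolution
\[
  \bar f(\y):=\inf_{\x\in R}\bigl[f(\x)+\|\y-\x\|_+\bigr],
\]
assuming $R\neq\emptyset$ (if $R$ is empty then any Lipschitz function works). Three routine checks are needed. \emph{Finiteness:} fixing any $\x_0\in R$, the Lipschitz bound $f(\x)\geq f(\x_0)-\|\x_0-\x\|_+$ and the triangle inequality $\|\x_0-\x\|_+\leq\|\x_0-\y\|_++\|\y-\x\|_+$ combine to give $f(\x)+\|\y-\x\|_+\geq f(\x_0)-\|\x_0-\y\|_+$, uniformly in $\x\in R$. \emph{Agreement on $R$:} taking $\x=\y\in R$ in the infimum gives $\bar f(\y)\leq f(\y)$, while the Lipschitz property of $f$ on $R$ gives the reverse inequality. \emph{Lipschitz property of $\bar f$ on $H$:} taking the infimum over $\x\in R$ on both sides of $f(\x)+\|\y-\x\|_+\leq f(\x)+\|\y-\y'\|_++\|\y'-\x\|_+$ yields $\bar f(\y)\leq\bar f(\y')+\|\y-\y'\|_+$.

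For the second statement, I first apply the real-valued extension to obtain a Lipschitz $\tilde f:H\to\R$ extending $f:R\to\Z\subset\R$, using that $R\subset X^d\subset H$ and that the two notions of Lipschitz coincide. Lemma~\ref{lemma:HFisLIP} then produces the height function $\bar f:=\lfloor\tilde f\rfloor\leq\tilde f|_{X^d}$. To see $\bar f|_R=f$, observe that for each $\x\in R$ the integer $f(\x)=\tilde f(\x)$ itself satisfies the parity condition at $\x$ by hypothesis, hence is the largest such integer not exceeding $\tilde f(\x)$; the explicit description in Lemma~\ref{lemma:HFisLIP} then gives $\bar f(\x)=f(\x)$.

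The only obstacle of any substance is the asymmetry of $\|\cdot\|_+$, which forces one to orient each application of the triangle inequality with care; beyond that the argument is a direct adaptation of the classical proof.
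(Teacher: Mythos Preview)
Your proposal is correct and follows essentially the same route as the paper: obtain a real-valued Lipschitz extension first, then apply Lemma~\ref{lemma:HFisLIP} (the floor construction) to produce the height function and verify it agrees with $f$ on $R$. The paper is terser---it simply cites ``the original Kirszbraun theorem'' for the first assertion---whereas you supply the explicit McShane infimum-convolution and check the three properties; this is a welcome addition, since the norm here is asymmetric and the classical Kirszbraun statement does not literally apply.
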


The first assertion is the original Kirszbraun theorem.
For the second assertion, let $R\subset X^d$ and $f:R\to\mathbb Z$
be as in the lemma. The function $f$ extends to some Lipschitz function
$g:H\to\mathbb R$ by the Kirszbraun theorem. The previous lemma states that $\lfloor g\rfloor$ is a height function that equals
 $f$ on $R$. This proves the second assertion.


\section{Random height functions}
\label{sec:random_height_functions}

In this section, we introduce boundary
conditions (fixed or periodic)
in order to reduce $\Omega$
to a finite set, and define and study probability
measures on these finite sets.
We assert that the model is monotone in boundary conditions,
and state an immediate corollary of this fact
by employing the Azuma-Hoeffding inequality.

\subsection{Fixed boundary conditions}
\label{subsec:fbc}

We reduce $\Omega$ to a finite set by applying fixed
boundary conditions.
One can study the uniform probability measure on this finite set.
One can also define more general Boltzmann measures.
If $R\subset X^d$, then write $R^\complement$ for $X^d\smallsetminus R$.
Write $E^d(R)$ for the set of edges
in $E^d$ that are incident to at least one
vertex in $R$.

\begin{definition}
  \label{def:fixedbc}
  Define, for any height function $f$ and for any tiling $T$,
  \begin{alignat*}{2}
    &\Omega(R,f)&&:=\{g\in\Omega:g|_{R^\complement}=f|_{R^\complement}\},\\
    &\Theta(R,T)&&:=\{Y\in\Theta:Y\smallsetminus E^d(R)=T\smallsetminus E^d(R)\}.
  \end{alignat*}
  Call a set $R\subset X^d$ a \emph{region} if $R$ is finite and
  if $R^\complement$ is connected.
\end{definition}


\begin{lemma}
  \label{lem:FBClemma}
  Let $R\subset X^d$ be a finite set, $f$
  a height function,
  and $T:=T(f)$.
  Then
  \begin{enumerate}
    \item\label{lem:fbc_finite} $\Omega(R,f)$ and $\Theta(R,T)$ are finite sets,
    \item The map $g\mapsto T(g)$ restricts to an injection from
    $\Omega(R,f)$ to $\Theta(R,T)$,
    \item\label{lem:fbc_bijection} If $R$ is a region,
    then the restricted map
  in the previous statement is a bijection.
  \end{enumerate}
\end{lemma}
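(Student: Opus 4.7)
The plan is to handle the three claims in turn. For finiteness, note first that every edge in $E^d(R)$ has an endpoint in $R$, so $|E^d(R)| \leq (2d+2)|R| < \infty$; then $\Theta(R, T)$ is finite because its elements agree with $T$ on the complement of this finite edge set. For $\Omega(R, f)$, the Lipschitz condition~\eqref{eq:Lipschitz_generalized} applied across the boundary of $R$ yields, for any $g \in \Omega(R, f)$, any $\x \in R$ and any $\y \in R^\complement$, the bounds $f(\y) - \|\y - \x\|_+ \leq g(\x) \leq f(\y) + \|\x - \y\|_+$. Each $g(\x)$ therefore lies in a bounded set of integers, and since $R$ is finite there are only finitely many candidate functions.

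For the injection, I first check that the map lands in $\Theta(R, T)$: any edge $e \in E^d \smallsetminus E^d(R)$ has both endpoints in $R^\complement$, so $g = f$ at both endpoints and hence $e \in T(g)$ if and only if $e \in T$. Injectivity then follows because $T(g)$ determines $\nabla g$ while $g|_{R^\complement} = f|_{R^\complement}$ fixes $g$ at any $\x_0 \in R^\complement$ (a nonempty set since $R$ is finite and $X^d$ is infinite); connectedness of $(X^d, E^d)$ does the rest.

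Surjectivity is the step that requires the region hypothesis. Given $Y \in \Theta(R, T)$, I would form the flow $\alpha_Y$ via equation~\eqref{eq:definition_of_a_flow}. By the tiling property $\alpha_Y$ integrates to zero along every simplicial loop, and by the Cayley graph presentation of $(X^d, E^d)$ recalled in Subsection~\ref{subsection:tiling_of_a_stepped_surface} this upgrades to conservativity on every closed path. Hence $\alpha_Y = \nabla g$ for an integer-valued function $g$, unique once we fix $g(\x_0) := f(\x_0)$ at a chosen $\x_0 \in R^\complement$. Since $\alpha_Y = \nabla f$ on every edge with both endpoints in $R^\complement$ (precisely the statement that $Y \smallsetminus E^d(R) = T \smallsetminus E^d(R)$), the function $g - f$ has gradient identically zero on $R^\complement$; the connectedness of $R^\complement$ then promotes this to $g \equiv f$ on all of $R^\complement$.

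The main obstacle is exactly this last step: without the region hypothesis one would only conclude $g = f$ on the connected component of $\x_0$ in $R^\complement$, while different components could acquire a nonzero additive offset through $R$. With the region hypothesis no such problem arises. The remaining verifications are routine: $\nabla g \in \{1, -d\}$ by construction of $\alpha_Y$, and the parity condition $g([\x]) \equiv (\x, \n) \pmod{d+1}$ is preserved along every edge (since $1 \equiv -d \pmod{d+1}$) and holds at $\x_0$; in particular $g(\0) \in (d+1)\Z$, so $g \in \Omega(R, f)$ and $T(g) = Y$, completing the proof.
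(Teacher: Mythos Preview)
Your proof is correct and follows essentially the same line as the paper's. The only cosmetic differences are that the paper deduces finiteness of $\Omega(R,f)$ from the injection into the finite set $\Theta(R,T)$ rather than via a direct Lipschitz bound, and it assumes without loss of generality that $\mathbf{0}\notin R$ (using $\Phi^{-1}(f(\mathbf{0}),Y)$) instead of choosing an arbitrary basepoint $\mathbf{x}_0\in R^\complement$; the surjectivity argument via conservativity of $\alpha_Y$ and connectedness of $R^\complement$ is identical.
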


\begin{proof}
  Without loss of generality, $R$ does not contain $\mathbf 0$.
  If $g\in\Omega(R,f)$ then we must have $g(\mathbf 0)=f(\mathbf 0)$
  and $T(g)\smallsetminus E^d(R)=T(f)\smallsetminus E^d(R)$.
  The map $g\mapsto T(g)$ restricts to an injection because
  the gradient $\nabla g$ can be reconstructed from $T(g)$,
  which is sufficient for reconstructing $g$ as the constant is determined from $g(\0)=f(\0)$.
  The logarithm of $|\Theta(R,T)|$
  is bounded by $|E^d(R)|\log 2<\infty$.
  We have now proven the first two assertions of the lemma.

  Next, we prove that the same restriction map is also surjective whenever
  $R$ is a region.
  Fix $Y\in \Theta(R,T)$
  and define $g:=\Phi^{-1}(f(\mathbf 0),Y)$.
  It suffices to show that $g\in\Omega(R,f)$,
  that is, that $g(\mathbf x)=f(\mathbf x)$ for all
  $\mathbf x\in R^\complement$.
  Let $\mathbf p$ denote a path from $\mathbf 0$
  to $\mathbf  x$ through $(X^d\smallsetminus R,E^d\smallsetminus E^d(R))$;
  such a path exists by definition of a region.
  Then $g(\mathbf x)-f(\mathbf 0)$ and $ f(\mathbf x)-f(\mathbf 0)$ can both
  be calculated in terms of integrals of
    $\nabla g$ and $\nabla f$ respectively over the path $\p$.
    Since $Y\smallsetminus E^d(R)=T\smallsetminus E^d(R)$,
    these gradients are equal on the edges of $\p$,
    which proves that $g(\x)=f(\x)$.
\end{proof}

Fix a finite set $R\subset X^d$
and a height function $f\in\Omega$.
Write $f^\pm$ for the pointwise minimum and
maximum over all height functions in the finite set $\Omega(R,f)$.
These are also height function by virtue of
Lemma~\ref{lemma:aux},
and clearly $\Omega(R,f)=\{g\in\Omega:f^-\leq g\leq f^+\}$.
The same lemma implies the following result.

\begin{lemma}
  \label{lem:onemorewayforFBC}
  Fix a finite set $R\subset X^d$ and a height function $f$. Write $f^\pm$
  as in the preceding discussion and
  define $F^\pm:=\Xi(f^\pm)$.
  Then $\Xi$ restricts to a bijection from
  $\Omega(R,f)$ to
  $\{F\in\Psi:F^-\subset F\subset F^+\}$.
\end{lemma}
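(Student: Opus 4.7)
The statement is essentially an immediate corollary of the bijectivity of $\Xi$ together with Lemma~\ref{lemma:aux}\ref{lemma:aux:orderwell}. My plan is as follows.

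First, recall two facts already established. (i) The map $\Xi:\Omega\to\Psi$ is a bijection. (ii) The text preceding the lemma notes that
\[
  \Omega(R,f)=\{g\in\Omega:f^-\leq g\leq f^+\},
\]
which follows because any $g\in\Omega(R,f)$ is by definition pinned to $f$ on $R^\complement$ and therefore so are $f^-$ and $f^+$, while the order $f^-\leq g\leq f^+$ holds pointwise by definition of the pointwise minimum and maximum.

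Next, I would translate this characterisation across $\Xi$. By Lemma~\ref{lemma:aux}\ref{lemma:aux:orderwell} we have the equivalences
\[
  f^-\leq g \iff F^-\subset \Xi(g),\qquad g\leq f^+ \iff \Xi(g)\subset F^+.
\]
Combining with (ii) gives
\[
  g\in\Omega(R,f) \iff F^-\subset \Xi(g)\subset F^+.
\]
In other words, $\Xi$ maps $\Omega(R,f)$ into $\{F\in\Psi:F^-\subset F\subset F^+\}$, and a stepped surface $F$ in the target set pulls back via $\Xi^{-1}$ to an element of $\Omega(R,f)$. Since $\Xi$ is bijective on all of $\Omega$, its restriction is automatically injective, and the equivalence above gives surjectivity onto the target set. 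This yields the claimed bijection.

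There is no serious obstacle here: the only point worth being careful about is verifying that $f^\pm$ really do agree with $f$ on $R^\complement$ (so that the order sandwich correctly encodes the boundary condition), and this is immediate from the definition of $\Omega(R,f)$ together with Lemma~\ref{lemma:aux}, which guarantees $f^\pm\in\Omega$. Everything else is a formal transport of the sandwich $f^-\leq g\leq f^+$ through the order-preserving bijection $\Xi$.
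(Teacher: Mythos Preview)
Your proposal is correct and matches the paper's approach exactly: the paper does not give a separate proof but simply states that ``the same lemma implies the following result,'' referring to Lemma~\ref{lemma:aux}, which is precisely what you invoke. Your care in checking that $f^\pm$ agree with $f$ on $R^\complement$ (so that the sandwich $f^-\leq g\leq f^+$ characterises $\Omega(R,f)$) is the only point needing verification, and you handle it correctly.
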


  Next, we define Boltzmann measures.
  The uniform probability measures on
  $\Omega(R,f)$ and $\Theta(R,T)$ are examples of Boltzmann measures.
The introduction of  Boltzmann measures allows us to increase the relative probability of tilings containing certain edges.

  \begin{definition}
    Let $R$ be a region, $f$ a height function, and $T:=T(f)$ a tiling.
    A positive real function $w$ on $E^d(R)$ is called a \emph{weight function}.
    Let $\mathbb P_{w}$ be the probability measure on the set $\Theta(R,T)$
    such that
    $\mathbb P_w(Y)\propto \prod\nolimits_{e\in Y\cap E^d(R)} w(e)$ for any $Y\in\Theta(R,T)$,
    that is,
    \[
    \mathbb P_w(Y):=\frac{1}{Z_w} \prod_{e\in Y\cap E^d(R)} w(e)
    \qquad  \text{where}\qquad
    Z_w:=
    \sum_{Y\in \Theta(R,T)}\prod_{e\in Y\cap E^d(R)} w(e).
    \]
    The probability measure $\mathbb P_{w}$ is called a \emph{Boltzmann measure}
    and the normalising constant $Z_{w}$ is called the \emph{partition function}.
    The measure $\mathbb P_{w}$ is also considered a probability measure on the sample space
    $\Omega(R,f)$ by defining $\mathbb P_w(g):=\mathbb P_w(T(g))$.
    Write $\mathbb P$ for $\mathbb P_w$ with $w$ identically equal to $1$,
    and write $Z$ for the corresponding partition function.
    Observe that $Z=|\Omega(R,f)|=|\Theta(R,T)|$.
    The definition of $Z_w$ makes sense also when $w$ takes complex values.
  \end{definition}

  We prove that $Z_w$ equals the
  Cayley hyperdeterminant of a suitable hypermatrix in Section~\ref{sec:Kasteleyn}.


\subsection{The periodic setting}
\label{subsection:PLSD}

Write $H^*$ for the natural dual space of the
$d$-dimensional real vector space $H$.
A linear form $s\in H^*$ is called a \emph{slope}.

\begin{definition}
  Periodic boundary conditions are characterised by a pair $(L,s)$,
  where $L\subset X^d$ is a full-rank sublattice and $s\in H^*$ a slope.
  A function $f:X^d\to\mathbb R$ is called
  \emph{$(L,s)$-periodic} if, for any $\mathbf x\in X^d$ and $\mathbf y\in L$,
  \[f(\mathbf x+\mathbf y)=f(\mathbf x)+s(\mathbf y).\]
  Write
  $\Omega(L,s)$ for the set of $(L,s)$-periodic height functions that map $\mathbf 0$ to $0$.
  Call a pair $(L,s)$ \emph{valid} if $\Omega(L,s)$ is nonempty.
\end{definition}

It is not a priori clear which periodic boundary conditions $(L,s)$ are valid.
If $f$ is a height function, then write $f|_L$
and $s|_L$ for the restrictions of $f$ and $s$ to $L\subset X^d\subset H$
respectively.
First, every function $f\in \Omega(L,s)$ must satisfy
$f|_L=s|_L$, so if $s|_L$ does not extend to a height function then
$\Omega(L,s)$ is empty.
On the other hand, if $s|_L$ extends to a height function,
then the minimum amongst all possible extensions
is $(L,s)$-periodic.
Therefore $(L,s)$ is valid if and only if
$s|_L$ extends to a height function.
Lemma~\ref{lem:NEW_Kirszbraun} imposes a Lipschitz condition and
a parity condition on $s|_L$.
Clearly $s|_L$ is Lipschitz if and only if $s$ is Lipschitz.

\begin{definition}
  Write $\mathcal S$ for the set of slopes in $H^*$
  which are Lipschitz, that is,
  \[
  \mathcal S:=\{s\in H^*:\max\nolimits_is(\g_i)\leq 1\}.
  \]
\end{definition}

\begin{proposition}
  The set $\mathcal S\subset H^*$ is a closed $d$-simplex
  with extreme points $(s^i)_{1\leq i\leq d+1}$,
  where each slope $s^i$ satisfies, for any $j$,
   \[s^i(\mathbf g_j):=\begin{cases}-d&\text{if $i= j$,}\\1&\text{if $i\neq j$.}\end{cases}\]
\end{proposition}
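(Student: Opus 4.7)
The plan is to identify $\mathcal{S}$ as the convex hull of the $d+1$ slopes $s^i$, exploiting the key relation $\mathbf{g}_1+\dots+\mathbf{g}_{d+1}=\0$ (which holds because $\sum_i \e_i=\n\in\operatorname{Ker}P$). This single relation is the engine of the whole argument: it forces $\sum_i s(\mathbf{g}_i)=0$ for every $s\in H^*$, so the $d+1$ inequalities $s(\mathbf{g}_i)\le 1$ cutting out $\mathcal S$ are not independent in the naive sense, and instead $\mathcal S$ sits as a $d$-simplex inside the $d$-dimensional space $H^*$.

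First I would check that each candidate extreme point $s^i$ is well-defined as an element of $H^*$: the prescribed values $s^i(\mathbf{g}_j)$ satisfy $\sum_j s^i(\mathbf{g}_j)=-d+d\cdot 1=0$, which is the only relation among the $\mathbf{g}_j$'s, so $s^i$ extends uniquely and linearly to $H$. Since $\max_j s^i(\mathbf{g}_j)=1$, each $s^i$ lies in $\mathcal S$. Convexity of $\mathcal S$ (as the intersection of halfspaces) then gives $\operatorname{conv}\{s^1,\dots,s^{d+1}\}\subset\mathcal S$.

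For the reverse inclusion, I would introduce the explicit candidate barycentric coordinates
\[
\lambda_i:=\frac{1-s(\mathbf{g}_i)}{d+1}, \qquad i=1,\dots,d+1,
\]
for any $s\in\mathcal S$. Non-negativity is immediate from $s(\mathbf{g}_i)\le 1$; summing gives $\sum_i \lambda_i=\frac{(d+1)-\sum_i s(\mathbf{g}_i)}{d+1}=1$ thanks to the relation $\sum_i s(\mathbf{g}_i)=0$. A short computation verifies $\sum_i \lambda_i s^i(\mathbf{g}_j)=\sum_{i\ne j}\lambda_i-d\lambda_j=(1-\lambda_j)-d\lambda_j=1-(d+1)\lambda_j=s(\mathbf{g}_j)$, so $s=\sum_i \lambda_i s^i$ on the spanning family $\{\mathbf{g}_j\}$, hence on all of $H$. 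This shows $\mathcal S=\operatorname{conv}\{s^1,\dots,s^{d+1}\}$ and closedness is automatic.

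Finally, to conclude that $\mathcal S$ is genuinely a $d$-simplex with the $s^i$ as extreme points, I would verify affine independence: suppose $\sum_i \mu_i s^i=0$ with $\sum_i \mu_i=0$; evaluating at $\mathbf{g}_j$ yields $\sum_{i\ne j}\mu_i-d\mu_j=-\mu_j-d\mu_j=-(d+1)\mu_j=0$, forcing each $\mu_j=0$. Equivalently, the uniqueness of barycentric coordinates follows directly from the explicit formula for the $\lambda_i$. There is no real obstacle here; the only thing to be careful about is using the constraint $\sum_i s(\mathbf{g}_i)=0$ consistently, since this is what reconciles having $d+1$ seemingly independent inequality constraints in a $d$-dimensional space.
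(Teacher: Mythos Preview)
Your argument is correct and complete. The paper states this proposition without proof, treating it as an elementary fact; your explicit barycentric-coordinate computation $\lambda_i=(1-s(\mathbf g_i))/(d+1)$ together with the verification of affine independence fills the gap cleanly.
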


Introduce also the parity condition for the following result.

\begin{lemma}
Suppose given periodic boundary conditions $(L,s)$.
Then $(L,s)$ is valid if and only if
$s\in\mathcal S$ and
$s([\mathbf x])\equiv (\x,\n)\mod d+1$ for every $[\mathbf x]\in L\subset X^d$.
In particular, if $L\subset (d+1)X^d$,
then $(\x,\n)\in (d+1)\Z$ for every $[\mathbf x]\in L$,
and under this extra condition, $(L,s)$ is valid if and only if
$s\in\mathcal S$ and $s(\mathbf x)\in(d+1)\mathbb Z$ for every $\mathbf x\in L$.
\end{lemma}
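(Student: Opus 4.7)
The plan is to apply the second assertion of Lemma~\ref{lem:NEW_Kirszbraun} with $R:=L$ and $f:=s|_L$, building directly on the reduction recorded in the paragraph preceding the statement: $(L,s)$ is valid if and only if $s|_L$ extends to a height function.

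For necessity, I would argue as follows. If $s|_L$ extends to some $\bar f\in\Omega$, then $\bar f$ is Lipschitz on $X^d$ and satisfies the parity condition at every vertex. Restricting to $L$ and using $\bar f|_L=s|_L$ yields the parity condition $s([\x])\equiv(\x,\n)\bmod d+1$ on $L$; the Lipschitz property of $s|_L$, combined with the equivalence ``$s|_L$ Lipschitz iff $s$ Lipschitz'' noted just before the lemma, forces $s\in\mathcal S$.

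For sufficiency I would run Lemma~\ref{lem:NEW_Kirszbraun} in the forward direction. The parity condition makes $s|_L$ integer valued, since $(\x,\n)\in\Z$; the hypothesis $s\in\mathcal S$ makes $s$, and hence $s|_L$, Lipschitz. Lemma~\ref{lem:NEW_Kirszbraun} then produces a height function extension of $s|_L$, so $(L,s)$ is valid.

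The ``in particular'' clause is a direct computation: if $L\subset(d+1)X^d$, then each $[\x]\in L$ can be written $\x=(d+1)\y+k\n$ with $\y\in\Z^{d+1}$ and $k\in\Z$, so $(\x,\n)=(d+1)(\y,\n)+k(d+1)\in(d+1)\Z$, and the parity condition collapses to $s(\x)\in(d+1)\Z$. The only step that is not pure bookkeeping is the implication ``$s|_L$ Lipschitz $\Rightarrow s$ Lipschitz'' used in the necessity direction; this is stated as clear before the lemma and follows from linearity of $s$ together with $L$ being of full rank, by a short scaling argument applied to the inequality $s(\z)\leq\|\z\|_+$ for $\z\in L$. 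I do not expect any other step to present difficulty.
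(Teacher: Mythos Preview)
Your proposal is correct and follows essentially the same approach as the paper: the paper does not give a separate proof of this lemma, but rather derives it from the discussion immediately preceding it (the reduction to extending $s|_L$ to a height function, the appeal to Lemma~\ref{lem:NEW_Kirszbraun}, and the remark that $s|_L$ is Lipschitz iff $s$ is), which is exactly what you spell out. Your handling of the ``in particular'' clause and the scaling/density argument for the Lipschitz equivalence are fine elaborations of points the paper leaves implicit.
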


If $L=n(d+1)X^d$ for some $n\in\mathbb N$,
then $(L,s)$ is valid if and only if
$s\in \mathcal S$ and
\[s(n(d+1)\mathbf g_i)=n(d+1)s(\mathbf g_i)\in (d+1)\mathbb Z\]
for every $1\leq i\leq d+1$,
that is, $s(\mathbf g_i)\in \mathbb Z/n$.

\begin{definition}
  Write $L_n:=n(d+1)X^d$ for every $n\in\mathbb N$.
  Define \[\mathcal S_n:=\left\{s\in\mathcal S:\text{$s(\mathbf g_i)\in\mathbb Z/n$ for every $1\leq i\leq d+1$}\right\}.\]
  In other words, $\mathcal S_n$ is precisely the set of slopes $s$
  such that $(L_n,s)$ is valid.
\end{definition}

Remark that $\mathcal S_n$ converges to $\mathcal S$ in the Hausdorff metric,
in the sense that
every slope $s\in \mathcal S$ can be approximated by a sequence
of slopes $(s_n)_{n\in\mathbb N}$ where  $s_n\in \mathcal S_n$ for each $n$.

\subsection{Symmetries of the periodic setting}
For $\mathbf x\in X^d$, write $\theta_\mathbf x$
for the map $H\to H,\,\mathbf y\mapsto \mathbf y+\mathbf x$.
This map is called a \emph{shift},
and it is also clearly a symmetry of $X^d$.
Write $\Theta(L)$
for the group $\{\theta_\mathbf x:\mathbf x\in L\}$
for any sublattice $L$ of $X^d$,
and write $\Theta:=\Theta(X^d)$.

If $f$ is a function defined on either $H$ or $X^d$,
then write $\theta f$ for the function defined by
$\theta f(\mathbf x):=f(\theta \mathbf x)$.
If $f$ is a height function and $\theta\in\Theta$,
then define the height function $\tilde\theta f$ by $\tilde \theta f:=\theta f-f(\theta\mathbf 0)+f(\mathbf 0)$.
In other words, $\tilde\theta f$ is the unique height function such that
$(\tilde\theta f)(\mathbf 0)=f(\mathbf 0)$
and $\theta T(\theta f)=T(f)$.
The map $\tilde\theta:\Omega\to\Omega$ is bijective,
and $\tilde\theta$ restricts to a bijection from $\Omega(L,s)$
to $\Omega(L,s)$ for any periodic boundary conditions $(L,s)$.

Each height function $f\in \Omega(L,s)$
is characterised by
the $\Theta(L)$-invariant set
 $T(f)\subset E^d$.
This implies in particular that
the set $\Omega(L,s)$ is finite, because
$|\Omega(L,s)|\leq 2^{|E^d/\Theta(L)|}<\infty$.

\begin{lemma}
  \label{lem:expectation_periodic}
Pick valid periodic boundary conditions $(L,s)$,
write $\mathbb P$ for the uniform probability measure on $\Omega(L,s)$,
and write $f$ for the random function in $\Omega(L,s)$.
Then $\tilde\theta f\sim f$ in $\mathbb P$ for any $\theta\in\Theta$.
Moreover, $\mathbb Ef(\mathbf x)=s(\mathbf x)$ for every $\mathbf x\in X^d$.
\end{lemma}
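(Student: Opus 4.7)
The plan is to handle the two assertions in order. First I would observe that the map $\tilde\theta:\Omega(L,s)\to\Omega(L,s)$ is a bijection of a finite set, as recorded explicitly in the paragraph preceding the statement. Since $\P$ is the uniform measure on this finite set, every self-bijection preserves $\P$, and this immediately yields $\tilde\theta f\sim f$ with no further work.

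For the identity $\E f(\x)=s(\x)$, the idea is to set $h(\x):=\E f(\x)$ and exploit the distributional symmetry just established to show that $h$ is a group homomorphism $X^d\to\R$. Concretely, I would specialise $\theta=\theta_{\y}$ and unfold the definition of $\tilde\theta$: since every $f\in\Omega(L,s)$ satisfies $f(\0)=0$, one has $(\tilde\theta_{\y}f)(\x)=f(\x+\y)-f(\y)$. Taking expectations and using $\tilde\theta_{\y}f\sim f$ would give
\[
h(\x+\y)=h(\x)+h(\y)\qquad\text{for all }\x,\y\in X^d,
\]
noting $h(\0)=0$ from the same normalisation.

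To finish, I would pin down the homomorphism $h$ by observing that every $f\in\Omega(L,s)$ is $(L,s)$-periodic with $f(\0)=0$, hence $f|_L=s|_L$ pointwise and therefore $h$ agrees with $s$ on $L$. Since $L$ is a full-rank sublattice of $X^d$, the quotient $X^d/L$ is finite, so for each $\x\in X^d$ there is a positive integer $n$ with $n\x\in L$; then
\[
n\,h(\x)=h(n\x)=s(n\x)=n\,s(\x),
\]
forcing $h(\x)=s(\x)$ on all of $X^d$.

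I do not expect a genuine obstacle: both parts reduce to the invariance of a uniform measure under a bijection together with a short divisibility argument in the last step. The only subtlety worth flagging is the normalisation $f(\0)=0$ baked into $\Omega(L,s)$, which is exactly what collapses $(\tilde\theta_{\y}f)(\x)$ to a clean increment and what forces $h$ to be additive rather than merely affine.
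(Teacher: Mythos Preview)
Your proof is correct and follows essentially the same route as the paper: invariance of the uniform measure under the bijection $\tilde\theta$ for the first part, then additivity of $h=\E f(\cdot)$ combined with agreement on the full-rank sublattice $L$ for the second. The paper phrases the final step as extending the additive map to a linear form in $H^*$ which is then pinned down by its values on $L$, whereas you use the equivalent torsion argument $n\x\in L\Rightarrow nh(\x)=ns(\x)$; both are the same idea.
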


\begin{proof}
  The first assertion is obvious as $\tilde\theta:\Omega(L,s)\to\Omega(L,s)$
  is a bijection and $\mathbb P$ is uniform on this set.
  The first assertion implies that the map $\mathbb E(f(\cdot)):X^d\to\mathbb R$
  is additive.
  Therefore it must extend to a linear form in $H^*$.
  Now $L$ is full-rank and
  $\mathbb P(f(\mathbf x)=s(\mathbf x))=1$ for every $\mathbf x\in L$,
  and therefore $\mathbb E(f(\cdot))$ must extend to the linear form $s\in \mathcal S\subset H^*$.
\end{proof}

\subsection{Monotonicity of the random function}
\label{subsec:mono}

Monotonicity in boundary conditions is often an essential
property for understanding the macroscopic behaviour of the system.
See Lemmas~3 and~4 in \cite{Linde}
for a proof of the following theorem---note that these lemmas apply both to
fixed boundary conditions in the general Boltzmann setting,
as well as to periodic boundary conditions.
The proof simply says that the Glauber dynamics
preserve the monotonicity and mix to the correct distribution.

\begin{theorem}[Monotonicity]
  \label{stabmon}
  Let $R$ be a region,
   let $b_1,b_2\in\Omega$, and fix a weight function $w$.
  Write $\mathbb P_1$ and $\mathbb P_2$ for the Boltzmann measures with weight $w$
  on $\Omega(R,b_1)$ and $\Omega(R,b_2)$ respectively.
  Write $a_-$ and $a_+$ for the infimum and supremum of $(b_1-b_2)|_{R^\complement}$
  respectively.
  Then there exists a probability distribution $\mathbb P$
  on the pair $(f_1,f_2)\in \Omega(R,b_1)\times\Omega(R,b_2)$
  with marginals $\mathbb P_1$ and $\mathbb P_2$
  such that
   $a_-\leq f_1-f_2\leq a_+$
  almost surely.

  Now let $(L,s)$ denote a valid pair of periodic boundary
  conditions,
   let $b_1,b_2\in\Omega(L,s)$,
  and fix $R\subset X^d$.
  Write $\P_1$ and $\P_2$
  for the uniform probability measure on $\Omega(L,s)$
  conditioned on $f|_R=b_1|_R$
  and $f|_R=b_2|_R$ respectively.
  Write $a_-$ and $a_+$ for the infimum and supremum of $(b_1-b_2)|_R$
  respectively.
  Then there exists a probability distribution $\mathbb P$
  on the pair $(f_1,f_2)\in \Omega(L,s)^2$
  with marginals $\mathbb P_1$ and $\mathbb P_2$
  such that
   $a_-\leq f_1-f_2\leq a_+$
   almost surely.
\end{theorem}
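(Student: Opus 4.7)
My approach is to build the coupling via monotone heat-bath Glauber dynamics started from initial configurations that already satisfy the desired inequality pointwise. I will describe the fixed boundary case in detail; the periodic case follows the same script.

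\textbf{Initial configurations.} The height function parity forces $b_1(\x)\equiv b_2(\x)\equiv (\x,\n)\pmod{d+1}$, so $a_\pm$ will be multiples of $d+1$, and the shifted functions $b_2+a_\pm$ are therefore again height functions. I would set $f_2^0:=b_2$ and
\[
f_1^0:=(b_1\wedge(b_2+a_+))\vee(b_2+a_-),
\]
which is a height function by Lemma~\ref{lemma:aux}. On $R^\complement$, the defining inequalities $b_2+a_-\le b_1\le b_2+a_+$ give $f_1^0=b_1$, so $f_1^0\in\Omega(R,b_1)$, and by construction $a_-\le f_1^0-f_2^0\le a_+$ everywhere on $X^d$.

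\textbf{Coupled dynamics and monotonicity.} I would then run parallel heat-bath dynamics on $(f_1,f_2)$: at each step, pick $\x\in R$ uniformly and an independent uniform $U\in[0,1]$, and resample each $f_i(\x)$ by using $U$ to invert the conditional Boltzmann CDF of $f_i(\x)$ given $f_i|_{X^d\smallsetminus\{\x\}}$. Since the Boltzmann weight $\prod_{e\in T(f)}w(e)$ depends on $f$ only through $\nabla f$, it is invariant under global vertical shifts by elements of $(d+1)\Z$; in particular, for any such $c$, the shifted height function $f_2+c$ would evolve under the common randomness in exactly the same way as $f_2$ itself. A standard monotonicity argument for the heat-bath sampler on height functions, relying on the distributive lattice structure of $\Omega$ (Lemma~\ref{lemma:aux}) and on the gradient-invariance of the weights, would then ensure that any pointwise inequality $g\le h$ between two coupled height functions survives each update. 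Applying this to the pairs $(f_1,f_2+a_+)$ and $(f_2+a_-,f_1)$ would preserve $a_-\le f_1-f_2\le a_+$ at every step. Each $\Omega(R,b_i)$ is finite (Lemma~\ref{lem:FBClemma}) and irreducible under single-site updates (Lemma~\ref{lem:localMove}, passing through $f\wedge g$ when neither configuration dominates the other), so the chain is ergodic with unique stationary distribution $\P_i$, and any weak limit of the joint law of $(f_1^t,f_2^t)$ would supply the required coupling. The periodic case is handled identically, with updates chosen uniformly in $X^d/L$ away from the conditioning set and shifted $(L,s)$-periodic configurations playing the role of $b_2+a_\pm$.

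\textbf{Main obstacle.} The principal technical point is the stochastic monotonicity of the heat-bath update: coordinatewise dominance of the neighbour values of $\x$ must lift to stochastic dominance of the conditional law of $f(\x)$. This is the familiar FKG-type monotonicity for Boltzmann measures on distributive lattices, and here it hinges on Lemma~\ref{lemma:aux} together with the observation that the weights act only through the gradient, hence are compatible with the lattice operations on $\Omega$. Everything else (irreducibility of the local-move graph, finite-state ergodicity, and the appropriate extension of Lemma~\ref{lem:localMove} to the periodic quotient) is routine.
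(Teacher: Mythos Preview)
Your approach is correct and matches the paper's own: the paper does not spell out the argument but states that ``the Glauber dynamics preserve the monotonicity and mix to the correct distribution,'' citing Lemmas~3 and~4 of \cite{Linde}, which is exactly the coupled heat-bath scheme you describe. Your identification of the key technical point---that the single-site conditional law depends on the neighbouring heights only through the gradient, so the flip probability is the same for $f_1$ and for any shift $f_2+c$, making the common-$U$ update order-preserving---is precisely what drives the argument.
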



\subsection{Pointwise concentration inequalities}
\label{subsec_Pointwise_concentration_inequalities}

\begin{theorem}
  \label{theorem:Azuma}
  Let $R\subset X^d$ denote a finite set,
   let $b\in\Omega$, and fix a weight function $w$.
  Write $\mathbb P$ for the Boltzmann measure with weight $w$
  on $\Omega(R,b)$.
  Fix $\x\in X^d$.
  Then the following inequalities hold true:
  \begin{enumerate}
    \item
    \label{AH_ineq_VAR}
     $\operatorname{Var} f(\x)\leq (d+1)^2 n$,
    \item
    \label{AH_ineq_UPPER}
    $\P(f(\x)-\mu\geq(d+1)a)\leq \exp - \frac {a^2}{2n}$ for all $a\geq 0$
    whenever $n>0$,
    \item
    \label{AH_ineq_LOWER}
    $\P(f(\x)-\mu\leq(d+1)a)\leq \exp - \frac {a^2}{2n}$ for all $a\leq 0$
    whenever $n>0$,
  \end{enumerate}
  where $n=d_{(X^d,E^d)}(\x,R^\complement)$
  and $\mu=\E f(\x)$.

  Now let $(L,s)$ denote a valid pair of periodic boundary
  conditions, and write $\P$
  for the uniform probability measure on $\Omega(L,s)$.
  Then~\eqref{AH_ineq_VAR}--\eqref{AH_ineq_LOWER}
  remain true for the choices $n=d_{(X^d,E^d)}(\x,L)\leq d_{(X^d,E^d)}(\x,\0)$
  and $\mu=\E f(\x)=s(\x)$.
\end{theorem}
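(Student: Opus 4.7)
The plan is to prove all three inequalities simultaneously via a Doob martingale argument along a shortest path from $\x$ to the boundary, combined with monotonicity. Azuma-Hoeffding then yields the tail bounds, while orthogonality of martingale differences gives the variance bound.

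First I would fix a shortest path $(\p_0, \p_1, \dots, \p_n)$ in $(X^d, E^d)$ with $\p_n = \x$ and $\p_0 \in R^\complement$ (respectively $\p_0 \in L$ in the periodic case). By construction, $n = d_{(X^d,E^d)}(\x, R^\complement)$ (respectively $d_{(X^d,E^d)}(\x, L)$). The case $n=0$ is trivial since then $f(\x)$ is deterministic: equal to $b(\x)$ under fixed boundary conditions, or equal to $s(\x)$ under periodic boundary conditions by $(L,s)$-periodicity and $f(\0)=0$. So assume $n \geq 1$.

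Next I would set $\mathcal{F}_k := \sigma(f(\p_1), \dots, f(\p_k))$ and define the Doob martingale $M_k := \E[f(\x) \mid \mathcal{F}_k]$. Then $M_0 = \mu$ and $M_n = f(\x)$. To bound $|M_k - M_{k-1}|$ by $d+1$, observe that, conditionally on $\mathcal{F}_{k-1}$, the Lipschitz and parity conditions force $f(\p_k) \in \{f(\p_{k-1}) + 1,\, f(\p_{k-1}) - d\}$—just two possible values, differing by $d+1$. Writing $a$ and $b$ for the two conditional expectations $\E[f(\x) \mid \mathcal{F}_{k-1}, f(\p_k) = f(\p_{k-1})+1]$ and $\E[f(\x) \mid \mathcal{F}_{k-1}, f(\p_k) = f(\p_{k-1})-d]$ respectively, Theorem~\ref{stabmon} applied to the region $R \setminus \{\p_1, \dots, \p_{k-1}\}$ (or its periodic analogue) with boundary data differing only at $\p_k$ by $d+1$ yields $0 \leq a - b \leq d+1$. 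Since $M_k \in \{a, b\}$ and $M_{k-1}$ is a convex combination, this gives $|M_k - M_{k-1}| \leq d+1$ almost surely.

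From the bounded increments, the three conclusions follow immediately: the variance bound (\ref{AH_ineq_VAR}) comes from orthogonality of martingale differences, $\operatorname{Var} f(\x) = \sum_{k=1}^n \E[(M_k - M_{k-1})^2] \leq n(d+1)^2$; the one-sided tail bounds (\ref{AH_ineq_UPPER}) and (\ref{AH_ineq_LOWER}) are direct applications of Azuma-Hoeffding with increment bound $d+1$ and the substitution $t = (d+1)a$.

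The main obstacle—really the only nontrivial step—is verifying that the martingale increments are bounded by $d+1$. This requires carefully invoking the monotone coupling of Theorem~\ref{stabmon} in the \emph{conditional} Boltzmann measures obtained after revealing $f(\p_1), \dots, f(\p_{k-1})$; the point is that each such conditional measure is itself a Boltzmann measure on a smaller region (with boundary data now including the revealed heights), and the coupling applies uniformly across conditioning because the only difference in the two boundary data comes from the single vertex $\p_k$, whose possible values differ by exactly $d+1$. Under periodic boundary conditions, the same argument runs with the periodic version of the monotone coupling and the path terminating at any $\p_0 \in L$, where $f(\p_0) = s(\p_0)$ is deterministic.
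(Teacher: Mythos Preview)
Your proposal is correct and follows essentially the same approach as the paper: a Doob martingale along a shortest path from the boundary to $\x$, with increments bounded by $d+1$ via the monotone coupling of Theorem~\ref{stabmon}, followed by Azuma--Hoeffding. The paper's proof is in fact much terser than yours---it simply names the martingale and asserts the increment bound---so you have filled in precisely the details the paper omits; the only minor slip is that the region in your monotonicity step should be $R\setminus\{\p_1,\dots,\p_k\}$ rather than $R\setminus\{\p_1,\dots,\p_{k-1}\}$, since $\p_k$ must lie on the boundary for the two boundary data to differ there.
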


\begin{proof}
This is a simple consequence of
monotonicity
(Theorem~\ref{stabmon})
and the Azuma-Hoeffding inequality.
Focus on fixed boundary conditions;
the proof for periodic boundary conditions is entirely analogous.
Let $\p=(\p_k)_{0\leq k\leq n}$
denote a path of shortest length from
$R^\complement$ to $\x$.
Write $(f_k)_{0\leq k\leq n}$
for the martingale such that
$f_k$ equals the expectation
of $f(\x)$ conditional on the values of $f$ on the vertices
in $\{\p_0,\dots,\p_k\}$.
By Theorem~\ref{stabmon},
we have $|f_k-f_{k+1}|\leq d+1$.
The theorem now follows from the Azuma-Hoeffding inequality.
\end{proof}

\section{The cluster boundary swap}
\label{sec:CBS}

In the seminal work~\cite{SHEFFIELD},
Sheffield introduces \emph{cluster swapping}.
This technique is related to the double
dimer model,
where for uniform probability measures,
the orientation of each loop is uniformly random
in the two states, independently of any other structure that is present.
In this section, we introduce the \emph{cluster boundary swap}.
The cluster boundary swap is a special case of the cluster swap,
adapted and optimised for
the special nature of the model under consideration.
Its properties are reminiscent of the double dimer model,
because, conditional on the geometrical structure involving boundaries
and double edges, the orientation of each boundary
is uniformly random in its two states.

\subsection{The boundary graph and the level set decomposition}

In Subsection~\ref{subsection:tiling_of_a_stepped_surface}
it was proven
 that every height function $f$ is characterised by the pair
$\Phi(f)=(f(\mathbf 0),T(f))$.
In this section, $f_1$ and $f_2$ denote height functions,
and we write $(a_i,T_i):=\Phi(f_i)$
for $i\in\{1,2\}$.
The difference function $f_1-f_2$ is denoted by $g$.
The goal of this subsection is to understand the level set structure of $g$.

\begin{lemma}\label{important}
Let $\mathbf s\in R^d$ be a rooted simplicial loop. As one walks along $\mathbf s$,
\begin{enumerate}
  \item The function $f_1$ moves up by $1$ exactly $d$ times,
  \item \label{important_two}The function $f_1$ moves down by $d$ exactly once,
  \item \label{important_three}Either $g$ remains constant, or it changes value twice,
  \item \label{important_four}If $g$ is not constant, then the difference between its two values is $d+1$.
\end{enumerate}
\end{lemma}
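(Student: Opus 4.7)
The plan is to exploit the closure of the simplicial loop together with the constraint $\nabla f_i \in \{-d, 1\}$, and to derive everything from a simple arithmetic count on the $d+1$ steps.

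First I would note that any rooted simplicial loop $\s = (\s_0, \ldots, \s_{d+1})$ closes in $X^d$, because its total increment in $\Z^{d+1}$ is $\e_{\xi(1)} + \cdots + \e_{\xi(d+1)} = \n$, which is $\0$ in $X^d$. Consequently, for any height function $f$, the sum of the gradients $\nabla f(\s_{k-1}, \s_k)$ over $1 \leq k \leq d+1$ equals $f(\s_{d+1}) - f(\s_0) = 0$. Since each gradient lies in $\{-d, 1\}$, if $a$ is the number of $+1$ steps and $b$ the number of $-d$ steps, then $a+b = d+1$ and $a - bd = 0$. Solving gives $b=1$ and $a=d$. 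Applied to $f_1$, this is exactly statements~\ref{important} and \ref{important_two}.

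For parts~\ref{important_three} and~\ref{important_four}, I would analyse the difference gradient $\nabla g = \nabla f_1 - \nabla f_2$ step by step. Since both $\nabla f_1$ and $\nabla f_2$ take values in $\{-d, 1\}$, the possible values of a single step of $\nabla g$ are $0$, $d+1$, or $-(d+1)$, with a nonzero value occurring precisely at those positions $k$ where $f_1$ and $f_2$ disagree about whether to go up or down. By parts~\ref{important} and~\ref{important_two} applied to each $f_i$, each of $f_1$ and $f_2$ has exactly one down-step along the loop. If these two down-steps occur at the same index $k$, then $\nabla g$ vanishes at every step and $g$ is constant along $\s$. Otherwise, the down-steps occur at two distinct indices, $\nabla g$ is nonzero at exactly those two positions, and it takes the values $+(d+1)$ at one of them (where $f_1$ goes up and $f_2$ goes down) and $-(d+1)$ at the other. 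This shows $g$ changes value exactly twice, which is~\ref{important_three}.

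Finally, for~\ref{important_four}, the closure of the loop forces $\sum_k \nabla g = 0$, so the two nonzero jumps must be $+(d+1)$ and $-(d+1)$ in some order. Walking around the loop, $g$ therefore attains exactly two distinct values whose difference is $d+1$. There is no real obstacle here; the whole lemma reduces to the arithmetic identity $d \cdot 1 + 1 \cdot (-d) = 0$ together with the observation that each $f_i$ contributes exactly one down-step, so the only mildly delicate point is just to notice that one must handle separately the degenerate case where the two down-steps coincide.
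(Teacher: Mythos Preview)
Your argument is correct and is exactly the reasoning the paper has in mind: the paper simply asserts that the lemma ``follows immediately from the observations in Subsection~\ref{subsection:tiling_of_a_stepped_surface}'', namely that $\nabla f$ integrates to zero along any simplicial loop and takes values in $\{-d,1\}$, which forces precisely one down-step per height function. You have merely made this explicit, including the case split on whether the two down-steps coincide.
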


The lemma follows immediately from the observations in Subsection~\ref{subsection:tiling_of_a_stepped_surface}.

Write $A\ominus B$ for the symmetric difference of arbitrary sets $A$ and $B$.


\begin{figure}
\begin{center}
  \begin{tikzpicture}[x=1cm,y=1cm,scale=1]
   \draw[ very thin] (0:2cm) -- (1*72:2cm) -- (2*72:2cm) -- (3*72:2cm) -- (4*72:2cm) -- cycle;
   \draw[very thick] (2*72:2cm) -- (3*72:2cm);
   \draw[very thick] (0*72:2cm) -- (4*72:2cm);

   \newcommand{\pentalable}[3]{
    \fill (#1*72:2cm) circle (0.1);
    \node at (#1*72:1.6cm) {$#2$};
    \node at (#1*72:2.6cm) {$#3$};
   }
   \pentalable{0}{5}{\quad\mathbf x_g^-(e_2)}
   \pentalable{1}{5}{}
   \pentalable{2}{5}{\mathbf x_g^-(e_1)}
   \pentalable{3}{10}{\mathbf x_g^+(e_1)}
   \pentalable{4}{10}{\mathbf x_g^+(e_2)}

   \node at (2.5*72:2.1cm) {$e_1$};
   \node at (4.5*72:2.15cm) {$e_2$};

\end{tikzpicture}

\caption[A simplicial loop]{
The values of $g=f_1-f_2$ along a simplicial loop; $d=4$
}

\label{fig:sl}

\end{center}
\end{figure}

\begin{definition}
  Define the graph $G_g=(V_g,E_g)$ as follows.
  Its vertex set $V_g$ is given by
  \[V_g:=T_1\ominus T_2=\{e\in E^d:\text{$g$ is not constant on $e$}\}\subset E^d,\]
  and two vertices $e_1,e_2\in V_g\subset E^d$
  are neighbours if some simplicial loop travels through both $e_1$
  and $e_2$.
  The graph $G_g$ is called the \emph{boundary graph}.
  For $e\in V_g$, write $\mathbf x_g^-(e),\mathbf x_g^+(e)\in X^d$ for the vertices contained in $e\subset X^d$ on which
  $g$ takes the smaller value and the larger value respectively---see Figure~\ref{fig:sl}.
  For any $C\subset V_g$, we write $\mathbf x_g^\pm(C):=\{\mathbf x_g^\pm(e):e\in C\}$.
\end{definition}

For example, if $d=2$, then one can identify each edge of the triangular lattice
with the lozenge obtained by removing that edge.
The set $V_g$ is then precisely the set of edges of lozenges which appear in exactly one
of the two configurations.
Two edges in $V_g$ are neighbours if they belong to the same triangle of the triangular lattice.
Each connected component of $G_g$ corresponds to the set of edges of the triangular lattice
crossed by a nontrivial loop of the double dimer model.

\begin{lemma}\label{twocomponents}
  Let $C\subset V_{g}$ be a connected component of $G_g$.
  Then $(X^d,E^d\smallsetminus C)$ consists of two connected components,
  one containing $\mathbf x_g^-(C)$, and the other containing $\mathbf x_g^+(C)$.
  Moreover, each of $\mathbf x_g^-(C)$ and $\mathbf x_g^+(C)$ is contained
  in a connected component of the graph $(X^d,E^d\smallsetminus V_{ g})$.
\end{lemma}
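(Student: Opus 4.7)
The plan is to prove the two claims in turn, starting with the easier \emph{moreover} assertion. For any two edges $e_1,e_2\in C$ that are neighbours in $G_g$, they lie on a common simplicial loop $\mathbf{s}$; by Lemma~\ref{important}, $\mathbf{s}$ contains exactly two $V_g$-edges, which must be $e_1$ and $e_2$, and on the remaining $d-1$ edges of $\mathbf{s}$ the function $g$ is constant. The loop $\mathbf{s}$ therefore splits into two arcs of non-$V_g$ edges, one connecting $\mathbf{x}_g^-(e_1)$ to $\mathbf{x}_g^-(e_2)$ and the other connecting $\mathbf{x}_g^+(e_1)$ to $\mathbf{x}_g^+(e_2)$. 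Iterating over the $G_g$-connected set $C$, I conclude that $\mathbf{x}_g^-(C)$ and $\mathbf{x}_g^+(C)$ each lie in a single connected component of $(X^d,E^d\smallsetminus V_g)$, and the two components are distinct because $g$ is constant on each such component but takes different values on the two sides of any edge of $C$.

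For the main assertion, I would introduce $h_C:X^d\to\Z$ defined as the net signed number of $C$-crossings along any path from a fixed base vertex, counting a crossing from $\mathbf{x}_g^-(e)$ to $\mathbf{x}_g^+(e)$ as $+1$. Well-definedness amounts to checking that the signed count vanishes on every simplicial loop, since the simplicial loops generate the relators in the Cayley-graph presentation of $X^d$: a loop that meets $V_g$ does so in exactly two edges lying in a common $G_g$-component, so either both are in $C$ (the $+1$ and $-1$ contributions cancel) or neither is. A direct gradient check then verifies that $f_1^{(C)}:=f_1-(d+1)h_C$ is itself a height function, because the jumps of $h_C$ across oriented $C$-edges precisely convert the gradient $+1$ of $f_1$ into $-d$ and vice versa. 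The modified pair $(f_1,f_1^{(C)})$ has difference $g^{(C)}=(d+1)h_C$ with $V_{g^{(C)}}=C$, and since the neighbour relation is a property of the underlying simplicial loops alone, $G_{g^{(C)}}$ coincides with the connected graph obtained by restricting $G_g$ to $C$, so it is a single connected component.

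This bootstrap reduces the problem to the case $V_g=C$. In that reduced setting I would argue that $h_C$ takes only two values: its range is a set of consecutive integers since $\nabla h_C\in\{-1,0,1\}$, and if it contained three consecutive levels then $C$ would contain edges whose endpoints have values $(0,1)$ as well as edges whose endpoints have values $(1,2)$, but by Lemma~\ref{important} no simplicial loop can carry $V_{g^{(C)}}$-edges of two different level types, so $C$ would split into disjoint $G_{g^{(C)}}$-components, contradicting connectedness. Having normalised $h_C$ to take values in $\{0,1\}$, each level set is internally connected in $(X^d,E^d\smallsetminus C)$: if $h_C^{-1}(0)$ decomposed into pieces $U_1,U_2,\dots$, the boundary edge sets $\partial U_i\subseteq C$ would be pairwise non-adjacent in $G_{g^{(C)}}$ by the same simplicial-loop argument used in the first paragraph, again contradicting connectedness of $C$. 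The main technical subtlety lies in the bootstrapping step that replaces $(f_1,f_2)$ by $(f_1,f_1^{(C)})$ to eliminate the $V_g\smallsetminus C$ contributions; once that device is in place, everything reduces to a careful but repetitive application of Lemma~\ref{important}.
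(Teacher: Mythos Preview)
Your proof is correct, but it takes a considerably longer route than the paper's.  After your first paragraph (which matches the paper), the paper finishes in two short strokes: since the endpoints of $C$-edges are exactly $\mathbf x_g^-(C)\cup\mathbf x_g^+(C)$ and every connected component of $(X^d,E^d\smallsetminus C)$ must be incident to some edge of $C$, there are at most two components; and since every simplicial loop meets $C$ an even number of times, the Cayley-relator argument from Subsection~\ref{subsection:tiling_of_a_stepped_surface} shows that \emph{every} closed walk meets $C$ evenly, so $(X^d,E^d\smallsetminus C)$ cannot be connected.  That is the whole argument.

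Your alternative builds the integer-valued crossing function $h_C$, produces the auxiliary height function $f_1^{(C)}=f_1-(d+1)h_C$, and reduces to the case $V_g=C$ before analysing level sets.  This works, and it is in fact a preview of the single-boundary case of the cluster boundary swap (Lemma~\ref{lem:construction_of_the_CBS}): your $f_1^{(C)}$ is exactly the second coordinate of $(f_1,f_2)\ominus C$ restricted to keep $f_1$ fixed.  The trade-off is that you are importing machinery that the paper develops \emph{after} this lemma, whereas the paper's mod-$2$ argument needs nothing beyond the relator structure already used in Section~\ref{sec:formal_intro}.  In particular, once you know $h_C$ is well-defined, you could skip the bootstrap entirely: $h_C$ is constant on components of $(X^d,E^d\smallsetminus C)$ and changes across every $C$-edge, so the two-component conclusion follows from your first paragraph together with the observation that all $C$-endpoints already lie in at most two components.
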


\begin{proof}
  Suppose that the $G_g$-vertices $e_1$ and $e_2$ are neighbours in the graph $G_g$;
  write $\mathbf s$ for a simplicial loop
  passing through both $e_1$ and $e_2$.
  Then $\mathbf s$ contains no other edges in $V_g$ by Proposition~\ref{important},~\ref{important_three},
  and therefore
   $\mathbf x_g^-(e_1)$ and $\mathbf x_g^-(e_2)$
   are connected
  in the graph $(X^d,E^d\smallsetminus V_g)$;
  see also Figure~\ref{fig:sl}.
  Induct on this argument to see that
   $\mathbf x_g^-(C)$ is contained in a connected component of $(X^d,E^d\smallsetminus V_{ g})$.
  Identical reasoning applies to the set $\mathbf x_g^+(C)$,
  and we also learn that each of $\mathbf x_g^\pm(C)$ is contained in a connected component
  of $(X^d,E^d\smallsetminus C)$.

  The sets $\mathbf x_g^\pm(C)$ cover all the endpoints of edges in $C$,
  and therefore two possibilities remain:
  either the graph $(X^d,E^d\smallsetminus C)$ is connected,
  or it consists of two connected components,
  with one containing $\mathbf x_g^-(C)$, and the other containing $\mathbf x_g^+(C)$.
  To establish the lemma we must exclude the first possibility.
  Every simplicial loop intersects $C$ an even number of times.
  The group theory arguments that proved
  that the flow in Subsection~\ref{subsection:tiling_of_a_stepped_surface}
  was conservative, imply here
  that any closed walk
  through $(X^d,E^d)$ intersects
  $C$ an even number of times.
  This proves that $(X^d,E^d\smallsetminus C)$ is not connected.
\end{proof}

\begin{definition}
  A \emph{$g$-level set} 
  is a connected component of the graph $(X^d,E^d\smallsetminus V_g)$.
  A \emph{$g$-boundary} 
  is a connected component of the graph $G_g=(V_g,E_g)$.
  The $g$-level sets are considered subsets of $X^d$,
  and the $g$-boundaries are considered subsets of $V_g\subset E^d$.
  If $E$ is a $g$-boundary, then write
  $X_g^\pm(E)$ for the $g$-level set containing $\mathbf x_g^\pm(E)$.
  The \emph{level set decomposition of $g$} or $\operatorname{LSD}(g)$ is an undirected graph,
    where the vertices are the $g$-level sets
   and the edges are the $g$-boundaries.
   The $g$-boundary $E$ connects the $g$-level sets $X_g^-(E)$ and
   $X_g^+(E)$.
   Write $g$ for the graph homomorphism $g:\operatorname{LSD}(g)\to (d+1)\mathbb{Z}$
   that assigns the value $g(X)$ to a $g$-level set $X$.
   The vector field $\nabla g$ directs the edges in $\operatorname{LSD}(g)$:
   it orients each $g$-boundary $E$ from $X_g^-(E)$ to
   $X_g^+(E)$.
   Write $(\operatorname{LSD}(g),\nabla g)$ for this directed graph.
\end{definition}

If $d=2$, then the $g$-boundaries correspond exactly to the loops of the double dimer model.
The $g$-level sets correspond to the connected components of $\mathbb R^2$
with all loops of the double dimer model removed.
 In Figure~\ref{fig:bs} we see an example of this graph.
 Each $g$-level set
 contracts into a single $\operatorname{LSD}(g)$-vertex.
 The $\operatorname{LSD}(g)$-edges are comprised of the $g$-boundaries separating
 the $g$-level sets.

\begin{lemma}
  \label{lem:subtleBigTreeLemma}
  The graph $\operatorname{LSD}(g)$ is well-defined and a tree.
\end{lemma}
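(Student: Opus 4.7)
The plan is to lean entirely on Lemma~\ref{twocomponents}, which already does the heavy lifting: removing any single $g$-boundary $E$ from $(X^d,E^d)$ yields exactly two connected components, one containing each of $\x_g^\pm(E)$. The three things to verify are (i) well-definedness of $\operatorname{LSD}(g)$, (ii) connectedness, and (iii) acyclicity.

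For well-definedness I would first note that every $g$-level set is connected in $(X^d,E^d\smallsetminus V_g)$, hence \emph{a fortiori} in $(X^d,E^d\smallsetminus E)$ for any $g$-boundary $E$; so each level set lies entirely in one of the two components given by Lemma~\ref{twocomponents}. In particular $X_g^-(E)$ and $X_g^+(E)$ are distinct, so $E$ is a genuine edge of $\operatorname{LSD}(g)$ rather than a loop. To exclude parallel edges I would suppose $E'\neq E$ were another $g$-boundary joining the same pair of level sets; then the edges making up $E'$ lie in $E^d\smallsetminus E$ yet connect vertices across the two distinct components of $(X^d,E^d\smallsetminus E)$, contradicting Lemma~\ref{twocomponents}.

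Connectedness is then easy: given level sets $X_1,X_2$, pick a path in $(X^d,E^d)$ between representative vertices; each maximal sub-path that avoids $V_g$ stays inside a single $g$-level set, and each $V_g$-edge traversed belongs to some $g$-boundary, so the path translates directly into a walk in $\operatorname{LSD}(g)$ from $X_1$ to $X_2$. For acyclicity I would show every edge of $\operatorname{LSD}(g)$ is a bridge. Fix $E$ and let $A^\pm$ be the two components from Lemma~\ref{twocomponents}. By the first paragraph every level set sits inside a single $A^\varepsilon$, and any other $g$-boundary consists of edges in $E^d\smallsetminus E$, forcing its two endpoint level sets to lie on the same side of the partition. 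Hence deleting $E$ from $\operatorname{LSD}(g)$ disconnects the level sets in $A^-$ from those in $A^+$, so $E$ is a bridge; a graph in which every edge is a bridge is a forest, and together with connectedness this yields the tree property.

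I do not expect a serious obstacle here: Lemma~\ref{twocomponents} already encodes the essential geometric fact, and the remaining work is organisational. The only spot that requires care is the parallel-edges argument, where one must be precise that distinct $g$-boundaries are disjoint as edge subsets of $E^d$ (they are distinct connected components of $G_g$), so that invoking Lemma~\ref{twocomponents} on one boundary constrains the other.
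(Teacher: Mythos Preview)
Your proposal is correct and follows the same route as the paper: both deduce the tree property from Lemma~\ref{twocomponents} by showing that every $g$-boundary is a bridge of $\operatorname{LSD}(g)$. The paper's proof is a one-liner that leaves connectedness and the exclusion of loops and parallel edges implicit, whereas you spell these out; your parallel-edges argument is in fact already subsumed by the bridge argument (parallel edges would form a $2$-cycle), but including it separately does no harm.
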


\begin{proof}
  It follows from Lemma~\ref{twocomponents} that every
  $\nabla g$-directed
  $\operatorname{LSD}(g)$-edge has a well-defined
  startpoint and endpoint, and that removing an edge disconnects the graph.
\end{proof}

\subsection{The cluster boundary swap}


\begin{figure}
\begin{center}
\begin{tikzpicture}[x=1cm,y=1cm,scale=1.2]
  \begin{scope}[shift={(0,0)}]
    \node[anchor=north] at (2.5,0) {$g$ and $(\operatorname{LSD}(g),\nabla g)$};
  \begin{scope}
    \clip (0,0) rectangle (5,5);
    \fill[Aquamarine!40] (0,0) rectangle (5,5);
    \fill[Aquamarine!70] plot [smooth cycle] coordinates {(0.5,0.5) (0.5,2)  (1.5,2) (1.5,0.5)};
    \draw[very thick,fill=Aquamarine!70] plot [smooth cycle] coordinates {(2,4.5)(4.5,4.5)(4.5,2)(2,2.5)};
    \fill[Aquamarine] plot [smooth cycle] coordinates {(3,2.5)(4.3,2.5)(4.3,4.2)(3.5,4.2)};

    \node[anchor=north] at (4.5,2) {$M$};

    \fill (1,3) circle (0.05);
    \node[anchor=south] at (1,3) {$0$};
    \fill (2.4,2.9) circle (0.05);
    \node[anchor=south] at (2.4,2.9) {$3$};
    \fill (3.8,3) circle (0.05);
    \node[anchor=south] at (3.8,3) {$6$};
    \fill (1,1.8) circle (0.05);
    \node[anchor=north] at (1,1.8) {$3$};
    \begin{scope}[very thick]
      \draw[->-=.4] (1,3) to [bend right]  (2.4,2.9) ;
      \draw[->-=.6] (2.4,2.9) to [bend right] (3.8,3);
      \draw[->-=.5] (1,3) to (1,1.8);
      \draw[dotted] (1,3) to [bend left] (-1,3);
    \end{scope}
  \end{scope}
  \end{scope}

  \begin{scope}[shift={(5.5,0)}]
    \node[anchor=north] at (2.5,0) {$g'$ and $(\operatorname{LSD}(g'),\nabla g')$};
  \begin{scope}
    \clip (0,0) rectangle (5,5);
    \fill[Aquamarine!40] (0,0) rectangle (5,5);
    \fill[Aquamarine!70] plot [smooth cycle] coordinates {(0.5,0.5) (0.5,2)  (1.5,2) (1.5,0.5)};
    \draw[very thick,fill=Aquamarine!10] plot [smooth cycle] coordinates {(2,4.5)(4.5,4.5)(4.5,2)(2,2.5)};
    \fill[Aquamarine!40] plot [smooth cycle] coordinates {(3,2.5)(4.3,2.5)(4.3,4.2)(3.5,4.2)};

    \node[anchor=north] at (4.5,2) {$M$};

    \fill (1,3) circle (0.05);
    \node[anchor=south] at (1,3) {$0$};
    \fill (2.4,2.9) circle (0.05);
    \node[anchor=south] at (2.4,2.9) {$-3$};
    \fill (3.8,3) circle (0.05);
    \node[anchor=south] at (3.8,3) {$0$};
    \fill (1,1.8) circle (0.05);
    \node[anchor=north] at (1,1.8) {$3$};
    \begin{scope}[very thick]
      \draw[->-=.6] (2.4,2.9) to [bend left] (1,3);
      \draw[->-=.6] (2.4,2.9) to [bend right] (3.8,3);
      \draw[->-=.5] (1,3) to (1,1.8);
      \draw[dotted] (1,3) to [bend left] (-1,3);
    \end{scope}
  \end{scope}
  \end{scope}
\end{tikzpicture}

\caption[The level set decomposition and a cluster boundary swap]{
  The level set decomposition and a cluster boundary swap by $M$;  $d=2$.
}

\label{fig:bs}

\end{center}
\end{figure}
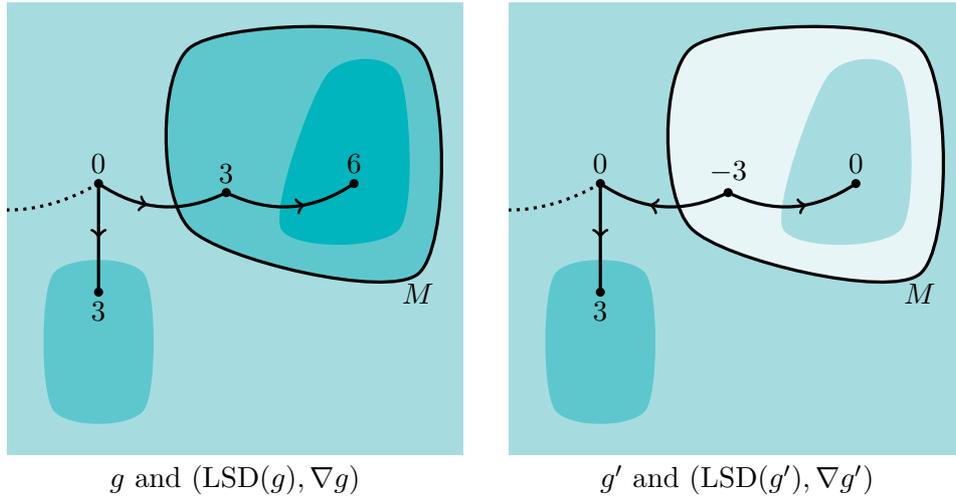

\begin{lemma}
  \label{lem:construction_of_the_CBS}
  Let $M\subset V_g\subset E^d$ be a union of $g$-boundaries.
  Then the sets $T_i':=T_i\ominus M$ are tilings for $i\in\{1,2\}$.
  Write $f'_1$ and $f_2'$ for the unique height functions
  such that $\Phi(f'_i)=(a_i,T_i')$ and define $g'=f_1'-f_2'$.
  Then $G_{g'}=G_g$ and $\operatorname{LSD}(g')=\operatorname{LSD}(g)$.
  Moreover, $\nabla g'$ and $\nabla g$ are the same except that the $g$-boundaries contained in $M$ have reversed orientation,
  that is, $\nabla g'=(-1)^{1_M}\cdot\nabla g$.
\end{lemma}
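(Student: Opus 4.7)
The plan is to verify the three assertions in order. \emph{First}, we check that $T_1'$ and $T_2'$ are tilings by confirming that each simplicial loop meets $T_i'$ exactly once. Let $\s\in R^d$. By Lemma~\ref{important}, either $g$ is constant on $\s$, in which case $\s\cap V_g=\emptyset$ and a fortiori $\s\cap M=\emptyset$; or $g$ is non-constant and $\s$ contains exactly two edges $e_1,e_2\in V_g$. In the latter case, $e_1$ and $e_2$ are neighbours in $G_g$ by definition, hence belong to a common $g$-boundary, and since $M$ is a union of $g$-boundaries, either $\{e_1,e_2\}\subset M$ or $\{e_1,e_2\}\cap M=\emptyset$. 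Writing $\s\cap T_i=\{e^*_i\}$: if $\s\cap M=\emptyset$ then $\s\cap T_i'=\{e^*_i\}$; if $\s\cap M=\{e_1,e_2\}$ then $\s\cap T_i'=\{e_1,e_2\}\setminus\{e^*_i\}$, again a singleton. Hence $|\s\cap T_i'|=1$ in every case, so $T_i'\in\Theta$ and $f_i'$ is well-defined.

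\emph{Second}, elementary set algebra gives
$$V_{g'}=T_1'\ominus T_2'=(T_1\ominus M)\ominus(T_2\ominus M)=T_1\ominus T_2=V_g,$$
and the boundary graph $G_g$ is determined by $V_g$ and the set of simplicial loops alone, so $G_{g'}=G_g$. Consequently the $g'$-boundaries coincide with the $g$-boundaries, the $g'$-level sets with the $g$-level sets, and $\operatorname{LSD}(g')=\operatorname{LSD}(g)$ as undirected graphs.

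\emph{Third}, we verify $\nabla g'=(-1)^{1_M}\nabla g$ edge by edge. For $e\notin M$, the symmetric difference leaves the membership of $e$ in each $T_i$ unchanged, so $\nabla f_i'(e)=\nabla f_i(e)$ and $\nabla g'(e)=\nabla g(e)$. For $e\in M\subset V_g$, exactly one of $T_1,T_2$ contains $e$; say $e\in T_1\setminus T_2$, so after the swap $e\in T_2'\setminus T_1'$. In the orientation where $\nabla f_1(e)=-d$, we have $\nabla f_2(e)=1$, $\nabla f_1'(e)=1$, $\nabla f_2'(e)=-d$, and therefore $\nabla g(e)=-(d+1)=-\nabla g'(e)$; the case $e\in T_2\setminus T_1$ is symmetric. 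This establishes all three assertions.

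The only delicate point is the first step: the hypothesis that $M$ is a \emph{union} of entire $g$-boundaries (rather than an arbitrary subset of $V_g$) is used essentially, since it guarantees that on every simplicial loop the two $V_g$-edges are simultaneously flipped or simultaneously fixed, which is precisely what preserves the tiling constraint $|\s\cap T_i'|=1$. The remaining assertions reduce to elementary set algebra and a local gradient computation.
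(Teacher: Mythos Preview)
Your proof is correct and follows essentially the same approach as the paper's: the tiling check via the two cases for $\s\cap M$, the symmetric-difference identity $V_{g'}=V_g$, and the gradient sign-flip on $M$ all match. The only cosmetic difference is that the paper packages the third step as the one-line flow identity $\nabla g'=\alpha_{T_1\ominus M}-\alpha_{T_2\ominus M}=(-1)^{1_M}(\alpha_{T_1}-\alpha_{T_2})$, whereas you unpack it edge by edge; and in the first step the paper observes directly that the two $M$-edges on $\s$ are one from $T_1$ and one from $T_2$, which gives $\s\cap T_1'=\s\cap T_2$ in the swapped case.
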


\begin{proof}
 First claim that $T_1'$ and $T_2'$ are tilings.
 We focus on $T_1'$.
 Let $\mathbf s$ be a simplicial loop,
 and abuse notation by writing $\mathbf s$ also for the set of edges
  crossed by this loop.
  It suffices to prove that $|T_1'\cap\mathbf s|=1$.
 Now either $\mathbf s\cap M$ is empty, or contains
 two edges, one from $T_1$ and one from $T_2$.
 In the former case we have
 $T_1'\cap\mathbf s=T_1\cap\mathbf s$ and consequently $|T_1'\cap\mathbf s|=1$.
 In the latter case, we have
 $T_1'\cap \mathbf s=T_2\cap\mathbf s$ and consequently $|T_1'\cap \mathbf s|=1$, as desired.
 This proves the claim.
 The appropriate functions $f_1'$ and $f_2'$ exist because
 $\Phi$ is a bijection from $\Omega$
 to $(d+1)\Z\times\Theta$.
 Next, \[V_{g'}:=T_1'\ominus T_2'=(T_1\ominus M)\ominus (T_2\ominus M)=T_1\ominus T_2 = V_g,\]
   and consequently $G_{g'}=G_g$ and
    $\operatorname{LSD}(g')=\operatorname{LSD}(g)$.
Recall the definition of $\alpha_T$
in terms of $T$ in~\eqref{eq:definition_of_a_flow}.
We have
\[\nabla g'=\alpha_{T_1'}-\alpha_{T_2'}=\alpha_{T_1\ominus M}-\alpha_{T_2\ominus M}=(-1)^{1_M}\cdot (\alpha_{T_1}-\alpha_{T_2})=(-1)^{1_M}\cdot\nabla g;\]
this follows directly from
the fact that $M\subset T_1\ominus T_2$
and from the definition of $\alpha _T$.
\end{proof}

\begin{definition}
  \label{def:CBS_Official_Def}
  Define
  \begin{alignat*}{3}
    &(T_1,T_2)\ominus M&&:=(T_1',T_2')&&=(T_1\ominus M,T_2\ominus M),\\
    &(f_1,f_2)\ominus M&&:=(f_1',f_2')&&=\big(\Phi^{-1}(a_1,T_1\ominus M),\Phi^{-1}(a_2,T_2\ominus M)\big),
  \end{alignat*}
  whenever these are related as in the previous lemma.
  Write $(f_1,f_2)\sim (f_1',f_2')$ whenever $(f_1',f_2')=(f_1,f_2)\ominus M$
  for some union of $g$-level sets $M$,
  in which case we say that the two pairs \emph{differ by a cluster boundary swap}.
  The relation $\sim$ is an equivalence relation;
  write $[(f_1,f_2)]$ for the equivalence class of $(f_1,f_2)$.
\end{definition}

\begin{remark}
 \begin{enumerate}
   \item If $(f'_1,f_2')=(f_1,f_2)\ominus M$, then $f_1'+f_2'=f_1+f_2$;
   a cluster boundary swap does not change the sum of the two involved height functions.
   To see that this is the case, observe that $M$ is a subset of $T_1\ominus T_2$,
   and therefore $1_{T_1\ominus M}+1_{T_2\ominus M}=1_{T_1}+1_{T_2}$
   and
   \[\nabla f_1+\nabla f_2=\alpha_{T_1}+\alpha_{T_2}=\alpha_{T_1\ominus M}+\alpha_{T_2\ominus M}=\nabla f_1'+\nabla f_2'.\]
  \item
    The cluster boundary swap was formalised
    in terms of the height functions
    $f_1$ and $f_2$.
    The cluster boundary swap should however
    be understood as an operation on the gradients
    $\nabla f_1$ and $\nabla f_2$
    of these height functions.
    This gradient operation is made into an operation
    on the non-gradient height functions by
    choosing the vertex $\0\in X^d$
    as a reference vertex at which
    the height is held constant.
   %
 \end{enumerate}
\end{remark}

Figure~\ref{fig:bs} illustrates a cluster boundary swap.
The thick contour is the set $M$,
and the two difference functions $g$ and $g'$
are related by $g=f_1-f_2$ and $g'=f_1'-f_2'$
where $(f_1',f_2'):=(f_1,f_2)\ominus M$.
Swapping by $M$ effectively inverts the orientation of the corresponding $g$-boundary.
One can swap any union of $g$-boundaries.
Therefore one can direct the edges of $\operatorname{LSD}(g)$ in any desired way.
We obtain the following theorem.

\begin{theorem}
  \label{thm:lekkerpropo}
  The relation $\sim$ is an equivalence relation on $\Omega^2$.
  The elements in the equivalence class of $(f_1,f_2)$
  correspond naturally to the graph homomorphisms from the tree $\operatorname{LSD}(g)$
  to $(d+1)\mathbb{Z}$ that map $\mathbf 0$ to $g(\mathbf 0)$.
\end{theorem}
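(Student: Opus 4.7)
The plan is to prove the two assertions of the theorem in turn. For the fact that $\sim$ is an equivalence relation, I would verify the three axioms directly. Reflexivity is immediate by taking $M=\emptyset$ in Definition~\ref{def:CBS_Official_Def}. Symmetry uses Lemma~\ref{lem:construction_of_the_CBS}, which guarantees $G_{g'}=G_g$: the very same $M$ is then a union of $g'$-boundaries, and a second application of $\ominus M$ recovers $(f_1,f_2)$ because symmetric difference is self-inverse at the level of the tilings $T_i$. For transitivity, suppose $(f_1',f_2')=(f_1,f_2)\ominus M_1$ and $(f_1'',f_2'')=(f_1',f_2')\ominus M_2$; the coincidence of $g$- and $g'$-boundaries shows that $M_2$ is also a union of $g$-boundaries, hence so is the symmetric difference $M_1\ominus M_2$, and the associativity of $\ominus$ on the tilings gives $(f_1,f_2)\ominus(M_1\ominus M_2)=(f_1'',f_2'')$.

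For the second assertion, I would construct an explicit bijection between $[(f_1,f_2)]$ and the set of graph homomorphisms $h\colon\operatorname{LSD}(g)\to(d+1)\Z$ sending the level set of $\mathbf 0$ to $g(\mathbf 0)$. Given $(f_1',f_2')\sim(f_1,f_2)$, the candidate map sends this pair to $g':=f_1'-f_2'$ viewed on level sets. This is well defined and is a graph homomorphism: since $V_{g'}=V_g$, the difference $g'$ is constant on each $g$-level set, and by Lemma~\ref{important},~\ref{important_four} the values on adjacent level sets of $\operatorname{LSD}(g)$ differ by exactly $d+1$. The required normalisation $g'(\mathbf 0)=g(\mathbf 0)$ follows from the remark below Definition~\ref{def:CBS_Official_Def}, which records that a cluster boundary swap preserves both $a_1$ and $a_2$, and hence their difference.

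Injectivity of this map is the easy direction: given $g'$, the identity $f_1'+f_2'=f_1+f_2$ from the same remark recovers $(f_1',f_2')$ uniquely. Surjectivity is where the tree structure of $\operatorname{LSD}(g)$, established in Lemma~\ref{lem:subtleBigTreeLemma}, enters essentially. Given any homomorphism $h$ as above, I would define $M$ as the union of those $g$-boundaries on which $h$ orients the corresponding tree edge oppositely to $g$; the swap by $M$ produces $g'$ with $\nabla g'=(-1)^{1_M}\nabla g$ matching the orientations prescribed by $h$ on every edge of $\operatorname{LSD}(g)$. Since $g'$ and $h$ agree at the level set of $\mathbf 0$ and $\operatorname{LSD}(g)$ is a tree, integrating along the unique path to any given vertex yields $g'=h$ everywhere.

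The main obstacle, I expect, is the surjectivity step: one must check that an arbitrary assignment of orientations to the edges of $\operatorname{LSD}(g)$ is globally consistent with some choice of $g'$, something that would fail if $\operatorname{LSD}(g)$ contained a cycle, because then the orientations along that cycle would be subject to a non-trivial integration constraint. The tree property from Lemma~\ref{lem:subtleBigTreeLemma} dispenses with this constraint cleanly, reducing everything else to straightforward bookkeeping with the identities of Lemma~\ref{lem:construction_of_the_CBS}.
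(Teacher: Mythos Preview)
Your proposal is correct and follows essentially the same approach as the paper, which treats the theorem as an immediate consequence of Lemma~\ref{lem:construction_of_the_CBS} and the tree property of Lemma~\ref{lem:subtleBigTreeLemma}, summarised in the single sentence ``one can direct the edges of $\operatorname{LSD}(g)$ in any desired way''. Your write-up simply spells out the bijection and the equivalence-relation axioms in more detail than the paper does, but the underlying argument---identifying homomorphisms with choices of edge orientations on the tree and realising each choice via the swap $\ominus M$---is identical.
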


\section{The variance and covariance structure}
\label{sec:varcovar}

This section is dedicated to a straightforward application of Theorem~\ref{thm:lekkerpropo} in the fixed boundary setting.

\begin{theorem}
  \label{thm:varianceboundofficial}
  Let $R$ be a region not containing $\mathbf 0$, let $b$ be a height function, and let $w:E^d(R)\to(0,\infty)$ be a weight function.
  Denote the Boltzmann measure on $\Omega(R,b)$ with weight $w$ by $\mathbb P_w$.
  Abuse notation by writing $\mathbb P_w$ for $\mathbb P_w\times\mathbb P_w$;
  write $(f_1,f_2)$ for the pair of random functions in this measure,
  and write $g:=f_1-f_2$.
  Also write $f$ for $f_1$.
  Then for any $\mathbf x\in X^d$, we have
  \[\operatorname{Var}_w f(\mathbf x)=\frac12(d+1)^2\mathbb E_w d_{\operatorname{LSD}(g)}(\mathbf 0,\mathbf x)
  .\]
  In other words, the variance of $f(\mathbf x)$ in $\mathbb P_w$ equals
  $\frac12(d+1)^2$ times the $\mathbb P_w$-expectation of the number of
  $g$-boundaries that separate $\mathbf x$ from $\mathbf 0$.
\end{theorem}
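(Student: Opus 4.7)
The plan is to express $\operatorname{Var}_w f(\mathbf{x})$ in terms of $\mathbb{E}_w g(\mathbf{x})^2$ and then evaluate the latter using the cluster boundary swap. Since $f_1$ and $f_2$ are i.i.d.\ under $\mathbb{P}_w \times \mathbb{P}_w$, the symmetry $(f_1,f_2) \stackrel{d}{=} (f_2,f_1)$ gives $\mathbb{E}_w g(\mathbf{x}) = 0$, and then
\[
\operatorname{Var}_w f(\mathbf{x}) = \tfrac12\, \operatorname{Var}_w\!\bigl(f_1(\mathbf{x})-f_2(\mathbf{x})\bigr) = \tfrac12\, \mathbb{E}_w g(\mathbf{x})^2.
\]
So it suffices to prove $\mathbb{E}_w g(\mathbf{x})^2 = (d+1)^2\, \mathbb{E}_w d_{\operatorname{LSD}(g)}(\mathbf{0},\mathbf{x})$.

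The key observation is that the product Boltzmann weight is invariant under the cluster boundary swap. Indeed, if $(f_1',f_2') = (f_1,f_2) \ominus M$ with $M \subset V_g$, then every $e \in M$ belongs to exactly one of $T_1, T_2$ and this membership is swapped in $T_1', T_2'$, while for $e \notin M$ the tilings are unchanged. Hence
\[
\prod_{e \in T_1'} w(e) \prod_{e \in T_2'} w(e) \;=\; \prod_e w(e)^{1_{T_1}(e)+1_{T_2}(e)} \;=\; \prod_{e \in T_1} w(e) \prod_{e \in T_2} w(e),
\]
where all products range over $E^d(R)$. Because $g$ vanishes on $R^\complement$, one has $V_g \subset E^d(R)$, so the swap also preserves the fixed boundary conditions and acts within $\Omega(R,b)^2$. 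I conclude that, conditional on the equivalence class $[(f_1,f_2)]$, the pair $(f_1,f_2)$ is uniformly distributed on this class. By Theorem~\ref{thm:lekkerpropo} this is the same as orienting each $g$-boundary by an independent fair coin flip.

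To finish, $g$ descends to a graph homomorphism $\operatorname{LSD}(g) \to (d+1)\mathbb{Z}$ with $g(\mathbf{0})=0$, since $\mathbf{0} \in R^\complement$. Let $X_0, X_1, \ldots, X_n$ denote the unique path in the tree $\operatorname{LSD}(g)$ from the $g$-level set containing $\mathbf{0}$ to the one containing $\mathbf{x}$, with connecting $g$-boundaries $E_1, \ldots, E_n$; here $n = d_{\operatorname{LSD}(g)}(\mathbf{0},\mathbf{x})$. Since $g$ is a graph homomorphism, $g(\mathbf{x}) = \sum_{k=1}^n (d+1)\epsilon_k$, where $\epsilon_k \in \{-1,+1\}$ records the orientation of $E_k$ along the path. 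By the previous paragraph, conditional on the equivalence class the $\epsilon_k$ are i.i.d.\ Rademacher, so
\[
\mathbb{E}_w\!\bigl[\, g(\mathbf{x})^2 \,\bigm|\, [(f_1,f_2)] \,\bigr] \;=\; (d+1)^2\, n.
\]
Since $n$ is a function of the equivalence class alone, taking the outer expectation and combining with the opening identity yields the theorem.

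The main obstacle is the weight-invariance step, together with the small verification that the cluster boundary swap preserves $\Omega(R,b)^2$; the key point is the observation $V_g \subset E^d(R)$. Once these are established, the rest is an application of Theorem~\ref{thm:lekkerpropo} and a second-moment computation along a deterministic path in the random tree $\operatorname{LSD}(g)$ with i.i.d.\ Rademacher increments.
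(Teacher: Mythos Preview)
Your proof is correct and follows essentially the same approach as the paper: reduce to $\tfrac12\mathbb{E}_w g(\mathbf{x})^2$, show that the product Boltzmann weight is constant on each equivalence class $[(f_1,f_2)]$ via $1_{T_1}+1_{T_2}=1_{T_1'}+1_{T_2'}$, check that the class lies in $\Omega(R,b)^2$, and then invoke Theorem~\ref{thm:lekkerpropo} to compute the conditional second moment as $(d+1)^2$ times the tree distance. The only cosmetic difference is that the paper phrases the boundary-preservation step as ``$R^\complement$ is connected and contains $\mathbf 0$, hence lies in a single $g$-level set, which is fixed by any swap,'' whereas you phrase it via $V_g\subset E^d(R)$; these are the same observation, and your version implicitly uses the connectedness of $R^\complement$ (part of the definition of a region) together with the anchoring $f_i'(\mathbf 0)=f_i(\mathbf 0)$ built into the swap.
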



\begin{proof}
  The random variables $f_1(\mathbf x)$ and $f_2(\mathbf x)$ are i.i.d.,
  and therefore
  \[\operatorname{Var}_w f(\mathbf x)=\frac12\operatorname{Var}_w(f_1(\mathbf x)-f_2(\mathbf x))=\frac12\mathbb E_w(f_1(\mathbf x)-f_2(\mathbf x))^2=\frac12\mathbb E_w g(\mathbf x)^2.\]
  It suffices to prove that
  $\mathbb E_w g(\mathbf x)^2 =(d+1)^2\mathbb E_w d_{\operatorname{LSD}(g)}(\mathbf 0,\mathbf x)$.
  In fact, we make the stronger claim that
  \[\mathbb E_w(g(\mathbf x)^2|[(f_1,f_2)])=(d+1)^2d_{\operatorname{LSD}(g)}(\mathbf 0,\mathbf x).\]
  The left hand side is $\sigma([(f_1,f_2)])$-measurable by definition.
  For the right hand side,
  observe that the graph $\operatorname{LSD}(g)$
  is constant on each equivalence class $[(f_1,f_2)]$,
  which means that $d_{\operatorname{LSD}(g)}(\mathbf 0,\mathbf x)$
  is also $\sigma([(f_1,f_2)])$-measurable.
  The proof of the claim relies on Theorem~\ref{thm:lekkerpropo}.

  Assert first that
  $[(f_1,f_2)]\subset\Omega(R,b)^2$
  whenever $(f_1,f_2)\in\Omega(R,b)^2$.
  The set $R^\complement$ is connected by the definition of a region,
  and it contains $\mathbf 0$.
  Therefore $R^\complement$ is contained in the $g$-level set
  containing $\mathbf 0$.
  A cluster boundary swap does not alter the values of $f_1$ and $f_2$
  on this $g$-level set,
  and therefore $f_1$, $f_2$, $f_1'$, $f_2'$, and $b$
  all assume the same values on $R^\complement$
  provided that $(f_1',f_2')\sim (f_1,f_2)$ and $(f_1,f_2)\in\Omega(R,b)^2$.
  This proves the assertion.
  Next, assert that $\mathbb P_w$ conditioned on $[(f_1,f_2)]$
  is uniform on $[(f_1,f_2)]$.
  To see that this is the case, observe that
  \[\mathbb P_w((f_1,f_2))\propto \prod_{e\in E^d(R)} w(e)^{(1_{T(f_1)}+1_{T(f_2)})(e)}.\]
  Now $1_{T(f_1)}+1_{T(f_2)}=1_{T(f_1')}+1_{T(f_2')}$
  whenever $(f_1',f_2')\sim(f_1,f_2)$, which proves the assertion.

  Theorem~\ref{thm:lekkerpropo} now provides the distribution
  of the function $g$ in the measure $\mathbb P_w$ conditioned
  on $[(f_1,f_2)]$.
  In particular,
  as $\operatorname{LSD}(g)$ is a tree,
  the distribution of $g(\mathbf x)$
  is given by summing the outcomes of
  $d_{\operatorname{LSD}(g)}(\mathbf 0,\mathbf x)$
  fair coin flips, each worth $\pm (d+1)$.
  It is well-known that the expectation of the square
  of this random variable is
  $(d+1)^2d_{\operatorname{LSD}(g)}(\mathbf 0,\mathbf x)$,
  which proves the claim.
\end{proof}

In fact, the exact same calculation works for the covariance of $f(\mathbf x)$ with $f(\mathbf y)$.

\begin{theorem}
  \label{thm:covarianceboundofficial}
  Work in the setting of the previous theorem.
  Then for any $\mathbf x,\mathbf y\in X^d$, we have
  \[\operatorname{Cov}_w(f(\mathbf x),f(\mathbf y))=\frac12(d+1)^2\mathbb E_w d_{\operatorname{LSD}(g)}(\mathbf 0,\mathbf z)
  \]
  where $\mathbf z$ is the last $\operatorname{LSD}(g)$-vertex of the $\operatorname{LSD}(g)$-path from $\mathbf 0$ to $\mathbf x$
  that also appears in the $\operatorname{LSD}(g)$-path from $\mathbf 0$ to $\mathbf y$.
  In other words, the covariance of $(f(\mathbf x),f(\mathbf y))$ in $\mathbb P_w$ equals
  $\frac12(d+1)^2$ times the expectation of the number of
  $g$-boundaries that separate both $\mathbf x$ and $\mathbf y$ from $\mathbf 0$.
\end{theorem}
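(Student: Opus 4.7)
The plan is to mirror the proof of Theorem~\ref{thm:varianceboundofficial}, swapping the squaring step for a bilinear/polarisation step. Because $f_1$ and $f_2$ are i.i.d.\ under $\P_w$ (read as $\P_w\times\P_w$), expansion gives
\[
\E_w[(f_1(\x)-f_2(\x))(f_1(\y)-f_2(\y))] \;=\; 2\operatorname{Cov}_w(f(\x),f(\y)),
\]
so it suffices to show $\E_w[g(\x)g(\y)] = (d+1)^2\,\E_w d_{\operatorname{LSD}(g)}(\mathbf 0,\z)$. I would then condition on the cluster-boundary-swap equivalence class $[(f_1,f_2)]$. Exactly the two preparatory facts used in Theorem~\ref{thm:varianceboundofficial} carry over verbatim: first, $[(f_1,f_2)]\subset\Omega(R,b)^2$ because $R^\complement$ is connected, contains $\mathbf 0$, and therefore lies inside a single $g$-level set on which no cluster boundary swap alters $f_1$ or $f_2$; second, $\P_w$ conditioned on $[(f_1,f_2)]$ is uniform on that class, since the Boltzmann weight depends only on $1_{T(f_1)}+1_{T(f_2)}$, which is preserved by cluster boundary swaps.

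Conditional on $[(f_1,f_2)]$, Theorem~\ref{thm:lekkerpropo} says that $g$ is distributed as a uniformly random graph homomorphism from the tree $\operatorname{LSD}(g)$ into $(d+1)\Z$ that sends $\mathbf 0$ to $0$, or equivalently as follows: to each edge of $\operatorname{LSD}(g)$ one attaches an independent fair $\pm(d+1)$ coin flip, and for any $\operatorname{LSD}(g)$-vertex $\mathbf v$ the value $g(\mathbf v)$ is the signed sum of the coin outcomes along the unique tree path from $\mathbf 0$ to $\mathbf v$. Since $\operatorname{LSD}(g)$ is a tree (Lemma~\ref{lem:subtleBigTreeLemma}), the paths from $\mathbf 0$ to $\x$ and from $\mathbf 0$ to $\y$ agree on an initial common segment ending precisely at $\z$ as defined in the theorem, and thereafter branch into edge-disjoint subpaths. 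Writing $S_0$, $S_1$, $S_2$ for the sums of coin outcomes along the $\mathbf 0\to\z$, $\z\to\x$, and $\z\to\y$ subpaths respectively, we have $g(\x)=S_0+S_1$ and $g(\y)=S_0+S_2$; the three sums $S_0,S_1,S_2$ are independent because they depend on disjoint sets of edges, and each has mean zero as a centered sum of fair $\pm(d+1)$ coin flips.

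Expanding therefore gives
\[
\E_w\bigl[g(\x)g(\y)\,\bigm|\,[(f_1,f_2)]\bigr] \;=\; \E[(S_0+S_1)(S_0+S_2)] \;=\; \E[S_0^2] \;=\; (d+1)^2\, d_{\operatorname{LSD}(g)}(\mathbf 0,\z),
\]
the last equality being the variance of a centered sum of $d_{\operatorname{LSD}(g)}(\mathbf 0,\z)$ fair $\pm(d+1)$ coin flips. Taking the outer expectation and dividing by $2$ yields the stated identity. There is no real obstacle: all the substantive structural work was done in Theorem~\ref{thm:lekkerpropo}, and the only new point is the tree-theoretic identification of $\z$ as the branching vertex of the two paths from $\mathbf 0$, which is immediate from the definition of $\z$ and the tree structure of $\operatorname{LSD}(g)$.
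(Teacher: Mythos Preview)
Your proposal is correct and follows essentially the same approach as the paper: the paper likewise reduces to $\operatorname{Cov}_w(f(\x),f(\y))=\tfrac12\E_w[g(\x)g(\y)]$, conditions on $[(f_1,f_2)]$, and then writes $(g(\x),g(\y))\sim(A+X,A+Y)$ with $A,X,Y$ independent sums of fair $\pm(d+1)$ coins along the three subpaths $\mathbf 0\to\z$, $\z\to\x$, $\z\to\y$---exactly your $S_0,S_1,S_2$. The only cosmetic difference is notation.
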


\begin{proof}
  Again, we have
  $\operatorname{Cov}_w(f(\mathbf x),f(\mathbf y))=\frac12\operatorname{Cov}_w(g(\mathbf x),g(\mathbf y))$,
  and we prove that
  \[\mathbb E_w (g(\mathbf x)g(\mathbf y)|[(f_1,f_2)])=(d+1)^2d_{\operatorname{LSD}(g)}(\mathbf 0,\mathbf z).\]
  The conditioned measure $\mathbb P_w$ directs the edges of $\operatorname{LSD}(g)$
  independently and uniformly at random,
  as in the previous theorem.
  Thus, under the conditioned measure $\mathbb P_w$, we
  have
  \[(g(\mathbf x),g(\mathbf y))\sim (A+X,A+Y),\]
  where $A$, $X$, and $Y$ are independent, where $A$ is determined by summing the outcome of $d_{\operatorname{LSD}(g)}(\mathbf 0,\mathbf z)$
  fair independent coin flips each valued $\pm(d+1)$,
  where $X$ is determined by flipping $d_{\operatorname{LSD}(g)}(\mathbf z,\mathbf x)$ coins,
  and where $Y$ is determined by flipping $d_{\operatorname{LSD}(g)}(\mathbf z,\mathbf y)$ coins.
  This proves the assertion.
\end{proof}

\section{Generalisation of the Kasteleyn theory}
\label{sec:Kasteleyn}

Consider fixed boundary conditions $(R,f)$ and $(R,T)$ with $R$ a region and $T=T(f)$.
The goal of this section is to show that $Z=|\Omega(R,f)|=|\Theta(R,T)|$
equals the Cayley hyperdeterminant of the adjacency hypermatrix of a suitably defined hypergraph.
In fact, we have no trouble in generalising to Boltzmann measures;
we show that one can insert the weights $w$ into the adjacency hypermatrix
so that the Cayley hyperdeterminant equals $Z_w$.
The hypergraph, which we shall denote by $(U^d,H^d)$,
is dual to the simplicial lattice $(X^d,E^d)$.
Recall that $U^d$ is the set of unrooted simplicial loops that was introduced earlier.
In dimension $d=2$ we recover exactly the theory of the dimer model on the hexagonal lattice.
(See Figure~\ref{fig:2d}\ref{fig:2d:hex}.)

\subsection{The dual of the simplicial lattice}
\label{subsec:convention}
In this subsection we define the hypergraph $(U^d,H^d)$.
Consider first the collection of simplicial loops.
If $\mathbf s=(\mathbf s_k)_{0\leq k\leq d+1}\in R^d$ is a rooted simplicial loop then $\mathbf s$ is characterised
by its starting point $\mathbf s_0\in X^d$
and the permutation $\xi\in S_{d+1}$
which describes in which order the increments
$(\g_i)_i$ appear.
This automatically gives a bijection from
$R^d$ to $X^d\times S_{d+1}$.
Let us agree to index each unrooted loop $\mathbf s\in U^d$ (by default)
such that $\mathbf s_1=\mathbf s_0+\mathbf g_{d+1}$.
There is a unique way of doing so, because
the increment $\mathbf g_{d+1}$ appears exactly once in each loop.
With this convention,
each unrooted loop $\mathbf s\in U^d$ is characterised
by its starting point $\mathbf s_0$ and the order $\xi\in S_d$ in which the remaining increments
$\{\mathbf g_1,\dots,\mathbf g_d\}$ appear in the path after the first increment.
By adopting the convention we obtain a bijection from $U^d$ to $X^d\times S_d$.
We identify the unrooted loop $\mathbf s$ with its image under the bijection,
so that every pair $(\mathbf x,\xi)\in X^d\times S_d$ denotes also an unrooted simplicial loop.

\begin{definition}
  For any $e\in E^d$, write $h(e)$ for the set of unrooted simplicial loops that traverse $e$.
\end{definition}

Write $e=\{\mathbf x,\mathbf x+\mathbf g_j\}\in E^d$ and
let us make a number of observations about the set $h(e)$.
First, the assignment $e\mapsto h(e)$ is injective, because
the edge $e$ is the only edge that is traversed by all loops in $h(e)$.
Secondly, there are precisely $d!$ unrooted simplicial loops that traverse $e$, since
they correspond to the $d!$ ways that we can order
the $d$ increments $(\mathbf g_i)_{i\neq j}$
that we need to walk back to $\mathbf x$ from $\mathbf x+\mathbf g_j$.
Therefore $h(e)$ contains $d!$ unrooted loops.
Finally, if $\mathbf s^1,\mathbf s^2\in h(e)$
are distinct loops identified with the pairs
$(\mathbf x^1,\xi^1),(\mathbf x^2,\xi^2)\in X^d\times S_d$,
then the permutations $\xi^1,\xi^2\in S_d$ must be distinct.
Conclude that for every $\xi\in S_d$, there is a unique
$\mathbf x\in X^d$ such that $(\mathbf x,\xi)\in h(e)$.

The reason that we introduced the map $h$ is the following.
A set $T\subset E^d$ is a tiling if and only if $h(T)$
is a partition of $U^d$, the set of simplicial loops.
Once could rephrase this statement by saying that $h(T)$ is a perfect matching of the hypergraph
$(U^d,h(E^d))$.

\begin{definition}
  Write $H^d$ for the set $\{h(e):e\in E^d\}$.
  The hypergraph $(U^d,H^d)$ is called the \emph{dual hyperlattice}
  or simply the \emph{dual} (of the simplicial lattice).
\end{definition}

\begin{lemma}
  \label{lem:hBijects}
  The map $h:T\mapsto \{h(e):e\in T\}$ is a bijection from $\Theta$ to the set of perfect matchings of the hypergraph $(U^d,H^d)$.
\end{lemma}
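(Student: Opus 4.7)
The plan is to exhibit the asserted bijection directly from the definitions, leveraging the equivalence already observed in the paragraph preceding the definition of $(U^d,H^d)$: namely, that $T\subset E^d$ is a tiling if and only if $h(T)$ partitions $U^d$.

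First, I would verify that $h(T)$ is a perfect matching of $(U^d,H^d)$ whenever $T\in\Theta$. By definition of a tiling, $|T\cap\mathbf{s}|=1$ for every (unrooted) simplicial loop $\mathbf{s}$. Since an edge $e\in E^d$ is traversed by $\mathbf{s}$ precisely when $\mathbf{s}\in h(e)$, this condition is equivalent to saying that each $\mathbf{s}\in U^d$ belongs to exactly one hyperedge of the form $h(e)$ with $e\in T$. That is exactly the statement that the hyperedges in $h(T)$ partition $U^d$, which is the definition of a perfect matching of $(U^d,H^d)$.

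Next, injectivity of $T\mapsto h(T)$ follows from the injectivity of $e\mapsto h(e)$: indeed, the paragraph preceding the definition of $(U^d,H^d)$ observes that $e$ is the unique edge traversed by every loop in $h(e)$, so $h(e)$ determines $e$. In particular, $h(T_1)=h(T_2)$ forces $T_1=T_2$. For surjectivity, let $\mathcal{M}$ be a perfect matching of $(U^d,H^d)$; every hyperedge in $\mathcal{M}\subset H^d$ has the form $h(e)$ for a unique $e\in E^d$, so setting $T:=\{e\in E^d:h(e)\in\mathcal{M}\}$ gives $h(T)=\mathcal{M}$. Reversing the translation of the first paragraph, the partition property of $\mathcal{M}$ means $|T\cap\mathbf{s}|=1$ for every $\mathbf{s}\in U^d$, so $T\in\Theta$.

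There is really no substantive obstacle here: the lemma is essentially a reformulation of the already-noted equivalence between the tiling condition and the partition property of $h(T)$, together with the easy injectivity of $h$. The only conceptual point worth recording is the identification of an unrooted loop with the set of edges it traverses, which makes the equivalence $\mathbf{s}\in h(e)\iff e\in \mathbf{s}$ an immediate tautology.
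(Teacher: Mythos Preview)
Your argument is correct and is exactly the reformulation the paper has in mind: the paper does not give a separate proof of this lemma, having already observed in the preceding paragraph that $T\in\Theta$ if and only if $h(T)$ partitions $U^d$, and that $e\mapsto h(e)$ is injective. Your write-up simply makes these observations explicit and checks surjectivity, which is immediate from the same equivalence.
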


Note that $(U^d,H^d)$ is really dual to $(X^d,E^d)$
because the map $h$ is a bijection from $E^d$ to $H^d$.
The hyperlattice is $d!$-uniform, because every hyperedge $h(e)$ contains $d!$ elements.
It is also $d!$-partite with the partition
$\{X^d\times \{\xi\}:\xi\in S_d\}$,
because every hyperedge $h(e)$ contains one loop in each member $X^d\times\{\xi\}$.
The $d!$-partite structure of the dual of the simplicial lattice
is special and it is a feature that distinguishes the simplicial lattice from other lattices
(in particular, the author is not aware of a similar construction for the square lattice in dimension larger than two).
The $d!$-partite structure enables us to generalise the Kasteleyn theory.

\subsection{The approach suggested by the classical dimer theory}

The purpose of this section is to demonstrate that the
size of $Z=|\Omega(R,f)|=|\Theta(R,T)|$
equals the Cayley hyperdeterminant of a suitable adjacency hypermatrix.
First recall how this works in the Kasteleyn theory for the dimer model on the hexagonal lattice.
If $d=2$ then $d!=2$,
that is, $(U^d,H^d)$ is a regular bipartite graph.
In fact, it is really the planar dual of the triangular lattice:
the hexagonal lattice.
The vertex set $U^d$ is split into its two parts:
a set of black and a set of white vertices.
A dimer cover (that is, a perfect matching of the graph) matches each black vertex to one white vertex.
This is illustrated by Figure~\ref{fig:2d}\ref{fig:2d:hex}.
The dimer cover is thus encoded by a bijective map $\sigma$ from the set of black vertices
to the set of white vertices;
each dimer is of the form $\{b,\sigma(b)\}$ with $b$
ranging over the set of black vertices.
To calculate the number of dimer covers, one needs to count the bijections
$\sigma$ from the black vertices to the white that produce a dimer cover.
If $K$ is an $n\times n$ matrix,
then $\operatorname{Det}K$ is defined as (this is the Leibniz formula)
\begin{equation}
  \label{eq:Leibniz}
  \operatorname{Det}K=\sum_{\sigma\in S_n}\left[\operatorname{Sign}\sigma\prod_{k=1}^nK(k,\sigma(k))\right].
\end{equation}
If the matrix $K$ is suitably chosen,
then the term in the square brackets reduces to the indicator function of the event
that $\sigma$ encodes a dimer cover, in which case $\operatorname{Det}K$ equals
the number of dimer covers.
This is the Kasteleyn theory for dimer models.
These observations suggest the following approach, consisting of four steps:
\begin{enumerate}
  \item \label{encodement} First, encode each tiling as a tuple
  of bijections.
  It turns out that we need $d!-1$ bijections in each tuple,
  because the graph $(U^d,H^d)$ is $d!$-partite, and because we need one bijection for each colour beyond the first.
  This is Lemma~\ref{generalNewBijection}.
  \item \label{finiteEncodement} Second, we show that applying fixed boundary conditions
  fixes the bijections at all but a finite number of points.
  Each tiling $Y\in \Theta(R,T)$ is thus encoded as a
  $(d!-1)$-tuple of bijections between finite sets. This is Lemma~\ref{lem:FBCnewBijection}.
  \item \label{encodementInArray} Third, we define a rank $d!$ adjacency hypermatrix $K$ and construct the Cayley hyperdeterminant
  for this hypermatrix, such that the each nonzero term in the sum in the definition of
  $\operatorname{Det} K$ corresponds to a tiling $Y\in \Theta(R,T)$.
  These are Definitions~\ref{KastArray} and~\ref{genDet}.
  \item \label{encodementSameSign} Finally, each nonzero term in this sum takes value $1$ or $-1$.
  This is due to the signs that appear in the determinant formula
  (note that the sign also appears in~\eqref{eq:Leibniz}).
  It takes some effort to show that all nonzero terms have the same sign.
  This is Lemma~\ref{sameSignLemma}.
\end{enumerate}
Once this has all been done, it is clear that  $Z=|\Omega(R,f)|=|\Theta(R,T)|=\pm\operatorname{Det}K$.
The dual hyperlattice $(U^d,H^d)$  plays a crucial role in the analysis.
Fix, throughout the remainder of this section,
an enumeration $\{\xi^1,\dots,\xi^{d!}\}=S_d$.

\subsection{The Kasteleyn theory in dimension $d\geq 2$}
We start with Step~\ref{encodement} of the proposed approach.
Let $Y\in\Theta$ be a tiling of $(X^d,E^d)$,
 so that $h(Y)$ is a perfect matching of $(U^d,H^d)$.
Each hyperedge $h(e)\in h(Y)$ contains one simplicial loop in each of the $d!$
 parts of the partition of $U^d$.
 The bijections corresponding to $Y$ are
 the unique maps
 \begin{equation}
   \label{eqSigmasDef}
   \sigma_i:X^d\times \{\xi^1\}\to X^d\times \{\xi^{i}\}
 \end{equation}
 such that
 \begin{equation}
   \label{eenElementVanSigma}
   \{\mathbf s,\sigma_2(\mathbf s),\sigma_3(\mathbf s),\dots,\sigma_{d!}(\mathbf s)\}\in h(Y)
  \end{equation}
   for every unrooted simplicial loop $\mathbf s\in X^d\times \{\xi^1\}$.
   All elements of $ h(Y)$ are given by ranging $\mathbf s$ over $X^d\times \{\xi^1\}$
   in~\eqref{eenElementVanSigma}.
   This is completely analogous to the dimer model.

 Suppose given arbitrary bijections $(\sigma_i)_{2\leq i\leq d!}$ as in~\eqref{eqSigmasDef}.
 Then the set of sets of simplicial loops
 \begin{equation}
   \label{sigmaPartition}
   \left\{\{\mathbf s,\sigma_2(\mathbf s),\dots,\sigma_{d!}(\mathbf s)\}:\mathbf s\in X^d\times \{\xi^1\}\right\}
 \end{equation}
 is automatically a partition of $U^d=X^d\times S_d$,
 because each map $\sigma_i$ is a bijection
 and therefore each loop $(\mathbf x,\xi^i)$ appears precisely once.
 Conclude that~\eqref{sigmaPartition} is a perfect matching of $(U^d,H^d)$ if and only if~\eqref{sigmaPartition}
 is a subset of the hyperedge set $H^d$.
 This yields the following result: Step~\ref{encodement} of the suggested approach.

  \begin{lemma}
    \label{generalNewBijection}
    The set of $(d!-1)$-tuples of bijections
    \[\left(\sigma_i:X^d\times \{\xi^1\}\to X^d\times \{\xi^i\}\right)_{2\leq i\leq d!}\quad\text{satisfying}
  \quad\{\mathbf s,\sigma_2(\mathbf s),\dots,\sigma_{d!}(\mathbf s)\}\in H^d\]
    for every loop $\mathbf s\in X^d\times \{\xi^1\}$
    is in bijection with the perfect matchings of $(U^d,H^d)$.
    The perfect matching of a tuple (under this bijection) is given by ranging $\mathbf s$ over $X^d\times \{\xi^1\}$;
    this is precisely the set in~\eqref{sigmaPartition}.
  \end{lemma}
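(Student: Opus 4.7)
The plan is to exhibit the two maps between the tuples and the matchings, each following directly from the construction given in the discussion preceding the lemma, and then verify that they are mutual inverses. The $d!$-partite structure of $(U^d,H^d)$, i.e.\ that every hyperedge contains exactly one vertex in each part $X^d\times\{\xi^i\}$, will be the decisive ingredient; it is already established in Subsection~\ref{subsec:convention}.

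For the forward direction, given a perfect matching $M$ of $(U^d,H^d)$, I would define $\sigma_i(\mathbf s)$ to be the unique vertex of $M$-neighbourhood of $\mathbf s$ that lies in $X^d\times\{\xi^i\}$. This is well-defined because every $\mathbf s\in X^d\times\{\xi^1\}$ is contained in exactly one hyperedge of $M$, and that hyperedge contains exactly one vertex of each colour. The resulting map is a bijection: injectivity and surjectivity both follow from the same observation applied to the colour $\xi^i$ instead of $\xi^1$.

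For the reverse direction, given a tuple $(\sigma_i)_{2\le i\le d!}$ satisfying the hyperedge condition, I would form the collection in~\eqref{sigmaPartition} and check that it is a perfect matching. Each set in the collection is a hyperedge by hypothesis, so I only need to verify that the collection partitions $U^d$. Since $U^d=\bigsqcup_{i=1}^{d!}X^d\times\{\xi^i\}$, it suffices to show that each $X^d\times\{\xi^i\}$ is partitioned, and this is immediate: for $i=1$ the index $\mathbf s$ ranges over $X^d\times\{\xi^1\}$ exactly once, and for $i\ge 2$ bijectivity of $\sigma_i$ ensures each element is hit exactly once.

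The two constructions are manifestly inverse to one another: starting from a matching, reading off the $\sigma_i$ and then reassembling the hyperedges recovers the original matching hyperedge-by-hyperedge, and starting from a tuple, building the matching and then extracting the coordinate maps returns the same $\sigma_i$. I do not expect any real obstacle; the only place one must be careful is to use the $d!$-partiteness to conclude that a hyperedge containing $\mathbf s\in X^d\times\{\xi^1\}$ meets every other part in exactly one vertex, which is exactly the content of the discussion of $h(e)$ in Subsection~\ref{subsec:convention}.
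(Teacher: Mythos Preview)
Your proposal is correct and follows essentially the same argument as the paper: the paper's proof is the discussion immediately preceding the lemma, which constructs the maps in both directions exactly as you do, using the $d!$-partiteness of $(U^d,H^d)$ to get well-definedness and bijectivity of the $\sigma_i$, and bijectivity of the $\sigma_i$ to see that~\eqref{sigmaPartition} is a partition. You are slightly more explicit about verifying that the two constructions are mutually inverse, but the content is the same.
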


Now consider Step~\ref{finiteEncodement} of the suggested approach.
Suppose given a region $R$ and a tiling $T$,
and consider a tiling $Y\in \Theta(R,T)$.
  By definition, $Y\in \Theta(R,T)$ if and only if
  $Y\smallsetminus E^d(R)=T\smallsetminus E^d(R)$.
  Therefore all loops traversing $T\smallsetminus E^d(R)$
  must be matched in the same way as in $T$,
  and the loops traversing $T\cap E^d(R)$
  can be matched differently.
  However,
  the loops that are matched differently are not
  allowed to produce new hyperedges outside the set $h(E^d(R))$,
  since we want $Y\smallsetminus E^d(R)=T\smallsetminus E^d(R)$.
  We first need to identify,
  for each part of the partition
  $\{X^d\times \{\xi^i\}:1\leq i\leq d!\}$
  of $U^d$,
  the set of loops traversing
  $T\cap E^d(R)$, that is, the loops that are allowed to be matched differently.
  This motivates the following definition.

  \begin{definition}
    Define, for a fixed region $R$ and a fixed tiling $T$,
    and for $1\leq i\leq d!$,
    \begin{align*}
      X_i&:=\{(\mathbf x,\xi^i):\text{the loop $(\mathbf x,\xi^i)$ intersects $T\cap E^d(R)$}\}\\
      &\phantom{:}=\{(\mathbf x,\xi^i):\text{the loop $(\mathbf x,\xi^i)$ does not intersect $T\smallsetminus E^d(R)$}\}\subset X^d\times \{\xi^i\}.
    \end{align*}
  \end{definition}

  Observe that $|X_i|=|T\cap E^d(R)|$,
  and therefore the sets $X_i$ all have the same, finite size.
  The sets $X_i$ contain the loops that are allowed to match differently.
  This is Step~\ref{finiteEncodement} of the suggested approach.

  \begin{lemma}
    \label{lem:FBCnewBijection}
    Let $R$ be a region and $T$ a tiling.
    The set of $(d!-1)$-tuples of bijections \[\left(\sigma_i:X_1\to X_i\right)_{2\leq i\leq d!}\qquad\text{satisfying}\qquad
    \{\mathbf s,\sigma_2(\mathbf s),\dots,\sigma_{d!}(\mathbf s)\}\in h(E^d(R))\]
    for every loop $\mathbf s\in X_1$
    is in bijection with the set of perfect matchings $h(Y)$ corresponding to tilings
     $Y\in\Theta(R,T)$.
    The perfect matching of a tuple (under this bijection) is
    \[\textstyle\left\{\{\mathbf s,\sigma_2(\mathbf s),\dots,\sigma_{d!}(\mathbf s)\}:\mathbf s\in X_1\middle\}\cup \middle \{h(e):e\in T\smallsetminus E^d(R)\right\}.\]
  \end{lemma}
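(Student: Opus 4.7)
The plan is to derive this lemma as a direct corollary of Lemma~\ref{generalNewBijection} together with the translation of the boundary condition $Y\smallsetminus E^d(R)=T\smallsetminus E^d(R)$ into a statement about the global bijections $(\sigma_i^Y)$ associated with $h(Y)$. The key preliminary observation is the identity
\[
h(Y)\cap h(E^d(R))^\complement=h(T)\cap h(E^d(R))^\complement
\qquad\Longleftrightarrow\qquad
Y\in\Theta(R,T),
\]
which follows because $h:E^d\to H^d$ is a bijection. Together with the definition of $X_i$ (loops which in $T$ are matched via an edge of $T\cap E^d(R)$), this identity pins down exactly where the tuples for $Y$ and $T$ may differ.

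For the forward direction, starting from $Y\in\Theta(R,T)$, I would apply Lemma~\ref{generalNewBijection} to both $Y$ and $T$ to obtain global bijection tuples $(\sigma_i^Y)$ and $(\sigma_i^T)$. For any loop $\mathbf s=(\mathbf x,\xi^1)$ in the complement of $X_1$, the hyperedge of $h(T)$ containing $\mathbf s$ uses an edge of $T\smallsetminus E^d(R)$, hence also belongs to $h(Y)$, which forces $\sigma_i^Y(\mathbf s)=\sigma_i^T(\mathbf s)$ for every $i$. By complementation, $\sigma_i^Y$ sends $X_1$ into $X_i$, and the restriction $\sigma_i^Y|_{X_1}\colon X_1\to X_i$ is a bijection whose associated hyperedges lie in $h(E^d(R))$. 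For the reverse direction, given any tuple $(\sigma_i\colon X_1\to X_i)_{2\leq i\leq d!}$ satisfying the constraint, I extend each $\sigma_i$ to a global bijection $\tilde\sigma_i\colon X^d\times\{\xi^1\}\to X^d\times\{\xi^i\}$ by gluing it with $\sigma_i^T$ on the complement of $X_1$; this is well-defined because $\sigma_i^T$ restricts to a bijection between those complements. The extended tuple satisfies the hypothesis of Lemma~\ref{generalNewBijection}, since on $X_1$ the hyperedge lies in $h(E^d(R))\subset H^d$ and on the complement it agrees with the $T$-hyperedge. The resulting perfect matching $h(Y)$ differs from $h(T)$ only inside $h(E^d(R))$, so $Y\in\Theta(R,T)$, and the two constructions are evidently mutually inverse.

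The step requiring the most care is the gluing in the reverse direction: I must check that $\sigma_i^T$ indeed restricts to a bijection between $X^d\times\{\xi^1\}\smallsetminus X_1$ and $X^d\times\{\xi^i\}\smallsetminus X_i$. This reduces to the observation that a hyperedge $\{\mathbf s,\sigma_2^T(\mathbf s),\dots,\sigma_{d!}^T(\mathbf s)\}=h(e)$ of $h(T)$ has $e\in T\smallsetminus E^d(R)$ if and only if each of its constituent loops lies in the complement of the corresponding $X_j$, which is immediate from the definition of $X_j$. Everything else in the proof is a bookkeeping exercise once this partition compatibility and the identification with Lemma~\ref{generalNewBijection} are in place.
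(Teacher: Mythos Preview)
Your proposal is correct and follows essentially the same approach as the paper. The paper does not give a formal proof of this lemma; it is presented as the conclusion of the preceding discussion, which observes that $Y\in\Theta(R,T)$ forces all loops not in the $X_i$ to be matched as in $T$ while the loops in the $X_i$ may be matched differently provided the resulting hyperedges lie in $h(E^d(R))$. Your write-up is a careful formalisation of exactly this reasoning via Lemma~\ref{generalNewBijection}, and the ``gluing'' step you flag as delicate is precisely the content of the paper's observation (just below the definition of $X_i$) that $|X_i|=|T\cap E^d(R)|$ for all $i$, i.e.\ that the hyperedges of $h(T)$ partition each part $X^d\times\{\xi^i\}$ compatibly into $X_i$ and its complement.
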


  The Kasteleyn hypermatrix and its determinant are now straightforwardly defined.

  \begin{definition}\label{KastArray}
    Let $R$ be a region and $T$ a tiling.
    Define
    \[K:X_1\times \dots\times X_{d!}\to\{0,1\},\,(\mathbf s_1,\dots,\mathbf s_{d!})\mapsto 1\left(\{\mathbf s_1,\dots,\mathbf s_{d!}\}\in h(E^d(R))\right),\]
    where $1(\cdot )$ equals one if the statement inside holds true and zero otherwise.
    The map $K$ is called the \emph{Kasteleyn hypermatrix}.
  \end{definition}

  From this definition and the previous lemma it follows that
  \[|\Theta(R,T)|=\sum_{\sigma_2:X_1\to X_2,\dots,\sigma_{d!}:X_1\to X_{d!}}\left[\prod_{\mathbf s\in X_1}K(\mathbf s,\sigma_2(\mathbf s),\dots,\sigma_{d!}(\mathbf s))\right],\]
  where the sum is over bijective maps only.
  This because the product produces a $1$ if the tuple $(\sigma_i)_{2\leq i\leq d!}$ corresponds to an element of $\Theta(R,T)$
  and zero otherwise.
  Recall that $|X_1|=\dots=|X_{d!}|=|T\cap E^d(R)|$ and write $n$ for this finite number.
  To simplify notation we identify each set $X_i$ with $[n]:=\{1,\dots,n\}$,
  so that the previous equality is written
  \begin{equation}
    \label{firstDetLikeForm}
    |\Theta(R,T)|=\sum_{\sigma_2,\dots,\sigma_{d!}\in S_n}\left[\prod_{k=1}^n K(k,\sigma_2(k),\dots,\sigma_{d!}(k))\right].
  \end{equation}
  The expression on the right looks similar to the definition of the determinant of a matrix,
  and if we insert the signs of the permutations then we obtain precisely the Cayley hyperdeterminant.

  \begin{definition}
    \label{genDet}
    Suppose given a map $A:[n]^m\to\mathbb C$
    for some $n\in \mathbb N$, $m\in2\mathbb N$.
    Define
    \begin{alignat}{2}\label{firstDetDef}
      \operatorname{Det}A&:=&&\sum_{\sigma_2,\dots,\sigma_m\in S_n}
      \left(
      \left[\prod_{i=2}^m \operatorname{Sign}\sigma_i\right]\left[\prod_{k=1}^n A(\phantom{\sigma_1(}k\phantom{)},\sigma_2(k),\dots,\sigma_m(k))\right]
      \right)
    \\
      \label{bigDetExpans}
      &\phantom{:}=\frac{1}{n!}&&\sum_{\sigma_1,\dots,\sigma_m\in S_n}
      \left(
      \left[\prod_{i=1}^m \operatorname{Sign}\sigma_i\right]\left[\prod_{k=1}^n A(\sigma_1(k),\sigma_2(k),\dots,\sigma_m(k))\right]
      \right).
    \end{alignat}
    This expression is called the \emph{Cayley hyperdeterminant} of $A$.
  \end{definition}
  The equality in the definition is straightforwardly verified, and it requires $m$ to be even.
  If we replace $A$ by $K$ in~\eqref{firstDetDef}
  then~\eqref{firstDetLikeForm} and~\eqref{firstDetDef} are the same,
  except that some signs appear in~\eqref{firstDetDef}
  that do not appear in~\eqref{firstDetLikeForm}.
  We conclude that the nonzero terms of the sum in~\eqref{firstDetDef} correspond
  precisely to the elements of $\Theta(R,T)$.
  This is Step~\ref{encodementInArray} of the proposed approach.
  In order to prove that
  $|\Theta(R,T)|=\pm \operatorname{Det}K$,
  it suffices to show that all terms of the sum in the definition of
  $\operatorname{Det}K$ have the same sign (this is Step~\ref{encodementSameSign}).

  \begin{lemma}
    \label{sameSignLemma}
    Let $R$ be a region and let $T$ be a tiling.
    Write $K$ for the Kasteleyn hypermatrix.
    Then all nonzero terms in the sum in the definition of
    $\operatorname{Det}K$ have the same sign.
  \end{lemma}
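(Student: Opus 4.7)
The plan is to prove sign-invariance of the nonzero terms in $\operatorname{Det}K$ under a \emph{local move} (Subsection~\ref{subsec:localmove}), and then invoke connectivity of $\Theta(R,T)$ under such moves. For the first step I would fix any $b\in\Omega$ with $T(b)=T$, so that by Lemma~\ref{lem:FBClemma} the set $\Theta(R,T)$ is in bijection with $\Omega(R,b)$. For any $f,f'\in\Omega(R,b)$, the meet $f\wedge f'$ again lies in $\Omega(R,b)$ by Lemma~\ref{lemma:aux}, and Lemma~\ref{lem:localMove} provides monotone sequences of local moves from $f$ down to $f\wedge f'$ and then up to $f'$. It therefore suffices to show that each local move at some $\x\in R$ preserves the sign of the corresponding term in the sum.

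A local move at $\x$ swaps the $d+1$ ``entering'' hyperedges $\{h(\{\x-\e_i,\x\}):1\le i\le d+1\}$ for the $d+1$ ``leaving'' hyperedges $\{h(\{\x,\x+\e_j\}):1\le j\le d+1\}$; the loops affected are exactly those in the set $L_\x$ of the $(d+1)!$ simplicial loops through $\x$, which splits into $d!$ colour classes $L_\x^{(k)}:=L_\x\cap(X^d\times\{\xi^k\})$ of size $d+1$ each. A short argument using $\x\in R$ and the defining property of $\Theta(R,T)$ shows that $L_\x^{(k)}\subseteq X_k$ for every $k$: since the pre-move tiling-edge of a loop in $L_\x$ must be incident to $\x$ and hence in $E^d(R)$, the $T$-tiling-edge of the same loop cannot lie outside $E^d(R)$ without contradicting uniqueness. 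Consequently, both pre- and post-move bijections $\sigma_i$ restrict to bijections from $L_\x^{(1)}$ to $L_\x^{(i)}$ and differ by a permutation $\pi_i$ of $L_\x^{(1)}$; the sign of the $\operatorname{Det}K$-term is multiplied by $\prod_{i=2}^{d!}\operatorname{Sign}(\pi_i)$, so it remains to verify that this product equals $+1$.

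For the sign computation, I would parametrise the colour-$\xi$ loops through $\x$ by their position $k\in\{0,1,\dots,d\}$ along the loop; a direct calculation from the convention $\mathbf{s}_1=\mathbf{s}_0+\g_{d+1}$ yields entering- and leaving-edge indices $a_k^\xi,b_k^\xi\in\{1,\dots,d+1\}$ satisfying the cyclic relation $b_k^\xi=a_{k+1 \bmod d+1}^\xi$. The map $\phi_\xi:a_k^\xi\mapsto b_k^\xi$ is therefore a $(d+1)$-cycle on $\{1,\dots,d+1\}$ whose sign $(-1)^d$ is independent of $\xi$. Identifying each loop in $L_\x^{(1)}$ with its leaving-edge index, one verifies that $\pi_i$ acts as $\phi_{\xi^i}\circ\phi_{\xi^1}^{-1}$, so $\operatorname{Sign}(\pi_i)=(-1)^d\cdot(-1)^d=+1$ for every $i$, and the product is trivially $+1$. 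The main technical obstacle is this clean identification of $\pi_i$ as a composition of two $(d+1)$-cycles: once the bookkeeping of entering and leaving edges across colour classes is organised, the sign verification is immediate. The argument specialises, when $d=2$, to the well-known observation that the local move on the hexagonal lattice induces a $3$-cycle of sign $+1$, recovering the classical fact that no Kasteleyn weighting is needed.
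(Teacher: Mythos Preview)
Your proposal is correct and follows essentially the same approach as the paper's proof: reduce to local moves via Lemma~\ref{lem:localMove} (through the meet $f\wedge f'$), and show that a single local move at $\x$ multiplies each $\sigma_i$ by a permutation that is the composition of two $(d+1)$-cycles, hence of sign $+1$. The paper makes this explicit by defining shift permutations $\delta_i:X_i\to X_i$ (each a single $(d+1)$-cycle on the loops through $\x$) and verifying $\sigma_i''=\delta_i\circ\sigma_i'\circ\delta_1^{-1}$; your $\phi_\xi$ is precisely this $\delta$ written in the entering/leaving-index coordinates, and your $\pi_i$ is the conjugate $(\sigma_i')^{-1}\delta_i\sigma_i'\delta_1^{-1}$. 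Your explicit justification that $L_\x^{(k)}\subseteq X_k$ is a detail the paper leaves implicit, so on that point your write-up is slightly more careful.
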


  \begin{proof}
    Let $R$, $T$ and $K$ be as in the lemma.
    We want to show that all terms of the sum in~\eqref{firstDetDef} (with $A$ replaced with $K$)
    have the same sign.
    The idea is to show that the sign is invariant under making a local move as defined in
    Subsection~\ref{subsec:localmove}.
    Write $f$ for the unique height function such that $\Phi(f)=(0,T)$.
    The nonzero terms in~\eqref{firstDetDef} correspond bijectively (through the bijections that we have set up
    in
    Lemma~\ref{lem:FBClemma},~\ref{lem:fbc_bijection}
    and in Lemma~\ref{lem:FBCnewBijection})
    to the height functions in $\Omega(R,f)$.
    We pick two height functions
    $f',f''\in \Omega(R,f)$ and prove that the corresponding terms in~\eqref{firstDetDef} have the same sign.
    By Lemma~\ref{lem:localMove},
    we may assume, without loss of generality,
    that $f''=f'+(d+1)\cdot 1_\mathbf x$ for some $\mathbf x\in R$.
    Let $T',T''\in\Theta(R,T)$ be the tilings corresponding to $f',f''$ respectively.
    Recall that
    \[T'=\left\{\{\mathbf y,\mathbf y+\mathbf g_i\}\in E^d:\nabla f'(\mathbf y,\mathbf y+\mathbf g_i)=-d\right\},\]
    and for $T''$ we have an identical expression in terms of $f''$.
    Remark that $f''=f'$ except at the point $\mathbf x$,
    and therefore $\nabla f=\nabla f'$ except at the edges incident to $\mathbf x$.
    Since $f''=f'+(d+1)\cdot 1_\mathbf x$ and since both $f'$ and $f''$ are height functions,
    we must have
    \begin{align*}
    &\nabla f'(\mathbf x,\mathbf x+\mathbf g_i)=\nabla f''(\mathbf x-\mathbf g_i,\mathbf x)=1,
    \\
    &\nabla f'(\mathbf x-\mathbf g_i,\mathbf x)=\nabla f''(\mathbf x,\mathbf x+\mathbf g_i)=-d,
  \end{align*}
    and therefore
    \begin{equation}
      \label{defNenNApostr}
      \begin{split}
        &T'\smallsetminus T'' = \{\{\mathbf x,\mathbf x-\mathbf g_i\}:1\leq i\leq d+1\},\\
      &  T''\smallsetminus T' = \{\{\mathbf x,\mathbf x+\mathbf g_i\}:1\leq i\leq d+1\}.
      \end{split}
    \end{equation}
    This means that all loops are matched the same (in the matchings $h(T')$ and $h(T'')$),
    except for the loops traversing the vertex $\mathbf x$.
    In order to prove the lemma, we work out the effect of this difference on the signs in~\eqref{firstDetDef}.
    Let $(\sigma_i')_{2\leq i\leq d!}$ denote the bijections from Lemma~\ref{lem:FBCnewBijection}
    corresponding to $T'$.
    This means that
    \[\{\{\mathbf s,\sigma_2'(\mathbf s),\dots,\sigma_{d!}'(\mathbf s)\}:\mathbf s\in X_1\}=h(T'\cap E^d(R)).\]
    Define, for each $1\leq i\leq d!$, the bijections
    \[\delta_i:X_i\to X_i,\,(\mathbf x,\xi^i)\mapsto \begin{cases}
    (\mathbf x+\mathbf g_j,\xi^i)&\text{if $(\mathbf x,\xi^i)$ traverses $\{\mathbf x-\mathbf g_j,\mathbf x\}$ for some $j$},\\
    (\mathbf x,\xi^i)&\text{if $(\mathbf x,\xi^i)$ does not traverse $\mathbf x$}.
    \end{cases}\]
    Note that $\delta_i$ is a permutation consisting of one cycle of length $d+1$.
    Claim that
    \begin{equation}
      \label{eq:setofbriefinterest}
    \{\{\delta_1(\mathbf s),\delta_2\circ \sigma_2'(\mathbf s),\dots,\delta_{d!}\circ \sigma_{d!}'(\mathbf s)\}:\mathbf s\in X_1\}=h(T''\cap E^d(R)).
    \end{equation}
    To support the claim,
    recall~\eqref{defNenNApostr}
    and
    observe simply that
    \begin{multline*}
      \{\delta_1(\mathbf s),\delta_2\circ \sigma_2'(\mathbf s),\dots,\delta_{d!}\circ \sigma_{d!}'(\mathbf s)\}\\
      =\begin{cases}
    h(\{\mathbf x,\mathbf x+\mathbf g_i\})&\text{if $\{\mathbf s, \sigma_2'(\mathbf s),\dots, \sigma_{d!}'(\mathbf s)\}=h(\{\mathbf x,\mathbf x-\mathbf g_i\})$ for some $i$},\\
    \{\mathbf s, \sigma_2'(\mathbf s),\dots, \sigma_{d!}'(\mathbf s)\}&\text{otherwise}.
    \end{cases}
  \end{multline*}
  This proves the claim.
  Since $\delta_1:X_1\to X_1$ is a bijection,
  the sets in~\eqref{eq:setofbriefinterest}
  are equal to
  \[
    \{\{\mathbf s,\delta_2\circ \sigma_2'\circ\delta_1^{-1}(\mathbf s),\dots,\delta_{d!}\circ \sigma_{d!}'\circ\delta_1^{-1}(\mathbf s)\}:\mathbf s\in X_1\}
  \]
    This implies that
    the bijections from Lemma~\ref{lem:FBCnewBijection}
    corresponding to $T''$ are, for $2\leq i\leq d !$,
    \[\sigma_i''=\delta_i\circ \sigma_i'\circ \delta_1^{-1}:X_1\to X_i.\]
    Now note that $\operatorname{Sign}\delta_i=(-1)^{d}$
    (since $\delta_i$ is a cycle of length $d+1$).
    Conclude that $\operatorname{Sign}\delta_i \cdot \operatorname{Sign}\delta_1^{-1}=1$,
    and therefore $\sigma_i'$ and $\sigma_i''$ have the same sign in~\eqref{firstDetDef}, for all $i$.
  \end{proof}

  We have completed the final step of the approach that was suggested by the Kasteleyn
  theory for dimer covers. This yields the following theorem.

  \begin{theorem}
    \label{thm:KAST}
    Let $R$ be a region, let $f$ be a height function, and let $T=T(f)$.
    Write $K$ for the Kasteleyn hypermatrix.
    Then $Z=|\Omega(R,f)|=|\Theta(R,T)|=\pm\operatorname{Det}K$.
  \end{theorem}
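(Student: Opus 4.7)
The plan is to combine the four preparatory results — Lemmas~\ref{generalNewBijection}, \ref{lem:FBCnewBijection}, and~\ref{sameSignLemma}, together with Lemma~\ref{lem:FBClemma},~\ref{lem:fbc_bijection} — that are carefully assembled in the text. After this groundwork the theorem is essentially a bookkeeping exercise, and I would present it as such.

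First I would invoke Lemma~\ref{lem:FBCnewBijection} to recast the counting problem: the elements of $\Theta(R,T)$ correspond bijectively to $(d!-1)$-tuples of bijections $(\sigma_i\colon X_1\to X_i)_{2\le i\le d!}$ such that every image set $\{\s,\sigma_2(\s),\dots,\sigma_{d!}(\s)\}$ belongs to $h(E^d(R))$. Fixing, for each $i$, an enumeration of $X_i$ that identifies it with $[n]$, the hyperedge membership condition is precisely what the indicator $K(k,\sigma_2(k),\dots,\sigma_{d!}(k))$ records, and so
\[
  |\Theta(R,T)| = \sum_{\sigma_2,\dots,\sigma_{d!}\in S_n} \prod_{k=1}^n K(k,\sigma_2(k),\dots,\sigma_{d!}(k)),
\]
a sum with $0/1$ summands.

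Next I would compare this unsigned count with the Cayley hyperdeterminant of Definition~\ref{genDet}, which is the very same sum weighted by $\prod_{i=2}^{d!}\operatorname{Sign}\sigma_i$. By Lemma~\ref{sameSignLemma}, every nonzero term of the signed sum carries a common sign $\varepsilon\in\{+1,-1\}$, so $\operatorname{Det} K = \varepsilon\,|\Theta(R,T)|$ and hence $|\Theta(R,T)| = \pm\operatorname{Det} K$. Lemma~\ref{lem:FBClemma},~\ref{lem:fbc_bijection} supplies the remaining identity $|\Omega(R,f)| = |\Theta(R,T)|$, which relies on the assumption that $R$ is a region; concatenating these equalities yields $Z=|\Omega(R,f)|=|\Theta(R,T)|=\pm\operatorname{Det} K$.

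The substantive content is fully concentrated in Lemma~\ref{sameSignLemma}, whose proof uses Lemma~\ref{lem:localMove} to reduce to local moves and then exhibits an explicit $(d+1)$-cycle $\delta_i$ of sign $(-1)^d$ on each colour class, so that the signs cancel pairwise between $\delta_i$ and $\delta_1^{-1}$. The theorem itself therefore contributes no genuinely new difficulty. The one point that warrants a moment's care is that the enumerations $X_i\simeq[n]$ used to define $K$ could in principle introduce a spurious sign, but since any reparametrisation multiplies every nonzero term by the \emph{same} fixed sign, it only shuffles the ambient $\pm$ in the conclusion and leaves the statement intact.
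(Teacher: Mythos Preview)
Your proposal is correct and mirrors the paper's own argument exactly: the theorem is presented there as the immediate consequence of the four preparatory steps (Lemma~\ref{lem:FBCnewBijection} for the bijection with tuples, Definition~\ref{genDet} for the signed sum, Lemma~\ref{sameSignLemma} for the common sign, and Lemma~\ref{lem:FBClemma},~\ref{lem:fbc_bijection} for $|\Omega(R,f)|=|\Theta(R,T)|$), with no additional ideas required. Your closing remark about reparametrisations of the $X_i$ only shifting the ambient $\pm$ is a nice clarification that the paper leaves implicit.
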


\subsection{Boltzmann measures}
  Recall the definition of a Boltzmann measure in Subsection~\ref{subsec:fbc}.
  The number $|\Omega(R,f)|=|\Theta(R,T)|$ equals the partition function $Z$ of the uniform probability measures
  on $\Omega(R,f)$ and $\Theta(R,T)$.
  The Kasteleyn theory is easily generalised to Boltzmann measures
  by inserting the weights into the
  Kasteleyn hypermatrix.

  \begin{definition}
    Let $R$ be a region, $T$ a tiling, and $w:E^d(R)\to \mathbb C$ any (complex-valued) weight function.
    Define
    \begin{align*}
    K_w:{}&X_1\times \dots\times X_{d!}\to \mathbb C,\\
    &(\mathbf s_1,\dots,\mathbf s_{d!})\mapsto 1\left(\{\mathbf s_1,\dots,\mathbf s_{d!}\}\in h(E^d(R))\right)\cdot w\left(h^{-1}(\{\mathbf s_1,\dots,\mathbf s_{d!}\})\right).
  \end{align*}
    The map $K_w$ is called the \emph{weighted Kasteleyn hypermatrix}.
  \end{definition}

  By comparing the definition of $Z_w$ with the definition of the Cayley hyperdeterminant,
  and taking into account Lemma~\ref{sameSignLemma}, it is readily verified
  that $Z_w=\pm\operatorname{Det}K_w$.

\subsection{Explicit calculations}

Two natural questions arise, which unfortunately we are not able to answer.
First, one may ask if it is possible to find a simple expression for the partition function in the
case of the torus,
in order to derive a closed-form formula for the surface tension.
This does not appear to be the case.
The Cayley hyperdeterminant is invariant under a change of basis,
and it is therefore meaningful to perform a Fourier transform on $K$.
The transformed hypermatrix is ``diagonal'',
in the sense that its rank-$d!$ hyperdeterminant
reduces to the permanent of a hypermatrix of rank $d!-1$.
This leads to the desired formula only in dimension two.
Second, one may ask if it is possible to derive results on dimer-dimer correlations.
The absence of a natural inverse for $K$ leads the author to believe
that also this second question is more difficult in the general case.


\section{Gradient Gibbs measures}
\label{sec:gibbs}


In previous sections we introduced fixed boundary conditions
and periodic boundary conditions, which enabled us to study probability measures
on finite subsets of $\Omega$.
This section introduces \emph{shift-invariant gradient Gibbs measures},
which prove to be an effective tool for studying the large-scale behaviour of the model.
While gradient Gibbs measures are interesting in their own right,
their main purpose here are their use in the proof of strict convexity of
the surface tension in Section~\ref{sec:strict}.

\subsection{Definition}

Write $f$ for the random function in $\Omega$.
Define for any $R\subset X^d$,
\begin{alignat*}{4}
  &\mathcal F&&:=\sigma(f(\mathbf x):\mathbf x\in X^d),
    &&\mathcal F_R&&:=\sigma(f(\mathbf x):\mathbf x\in R),\\
  &\mathcal F^\nabla&&:=\sigma(f(\mathbf x)-f(\mathbf y):\mathbf x,\mathbf y\in X^d),
    \qquad
    &&\mathcal F^\nabla_R&&:=\sigma(f(\mathbf x)-f(\mathbf y):\mathbf x,\mathbf y\in R).
\end{alignat*}
Note that $\mathcal F^\nabla_R=\mathcal F^\nabla \cap \mathcal F_R$ is finite whenever $R$ is finite
because it is generated by finitely many random variables, each taking finitely many values.
Write $\mathcal P(\Omega,\mathcal X)$ for the collection of probability measures
on the measurable space $(\Omega,\mathcal X)$ for any $\sigma$-algebra $\mathcal X$ on $\Omega$.
Probability measures in $\mathcal P(\Omega,\mathcal F^\nabla)$ are called \emph{gradient measures}.

A gradient measure $\mu\in\mathcal P(\Omega,\mathcal F^\nabla)$
is called \emph{shift-invariant} whenever $\mu(\tilde\theta A)=\mu(A)$
for any $A\in\mathcal F^\nabla$ and $\theta\in\Theta$,
where $\tilde\theta A:=\{\tilde\theta f:f\in A\}$.
In other words, a gradient measure $\mu$
is shift-invariant whenever $\nabla f$ and $\theta\nabla f$
have the same law in $\mu$ for every $\theta\in\Theta$.
The set of shift-invariant gradient measures
is denoted by $\mathcal P_\Theta(\Omega,\mathcal F^\nabla)$.
If  $\mu\in \mathcal P_\Theta(\Omega,\mathcal F^\nabla)$,
then it follows from shift-invariance that
the map $\mu(f(\cdot)-f(\mathbf 0)):X^d\to\mathbb R$
is additive over $X^d$.
Therefore there exists a unique $s\in H^*$
such that
$s(\mathbf x)= \mu(f(\mathbf  x)-f(\mathbf 0))$
for every $\mathbf x\in X^d$,
and we must have $s\in\mathcal S$ because $s(\mathbf g_i)=\mu(f(\mathbf g_i)-f(\mathbf 0))\leq 1$ for every $1\leq i\leq d+1$.
Write $s(\mu)$ for $s$, the \emph{slope} of $\mu\in\mathcal P_\Theta(\Omega,\mathcal F^\nabla)$.

Let $(L,s)$ denote valid periodic boundary conditions
and let $\mu$ denote the probability measure that is uniformly random in the finite set $\Omega(L,s)$.
Lemma~\ref{lem:expectation_periodic}
implies
that $\mu$ restricts to a shift-invariant gradient measure
of slope $s(\mu)=s$.

Let us now introduce the notion of a Gibbs measure.
Fix a measure $\mu\in\mathcal P(\Omega,\mathcal F)$.
The measure $\mu$ is called a \emph{Gibbs measure}
if for every finite $R\subset X^d$,
the distribution of $f$ in $\mu$ is the same
as the distribution of a sample $f$
obtained by
first sampling $g$ from $\mu$,
then sampling $f$ from $\Omega(R,g)$
uniformly at random.
The definition is formalised in terms of specifications and the Dobrushin-Lanford-Ruelle (DLR) equations.
For each finite $R\subset X^d$, let $\gamma_R$ denote
the probability kernel from $(\Omega,\mathcal F_{R^\complement})$ to $(\Omega,\mathcal F)$
such that for any $f\in\Omega$, the probability measure $\gamma_R(\cdot,f)$ is uniform in $\Omega(R,f)$.
It is obvious from the definition that $\Omega(R,f)$ is invariant under changing the values of
$f$ on $R$,
so that $\gamma_R(A,\cdot)$ is indeed $\mathcal F_{R^\complement}$-measurable for every $A\in\mathcal F$.
The kernels $\gamma_R$ satisfy the consistency condition;
if $S\subset R$, then
$\gamma_R\gamma_{S}=\gamma_R$.
The collection of probability kernels $\gamma_R$ is called a specification,
and a measure $\mu\in\mathcal P(\Omega,\mathcal F)$
is called a \emph{Gibbs measure} if $\mu$ satisfies the DLR equation
\begin{equation}
  \label{eq:DLR}
  \mu=\mu\gamma_R
\end{equation}
for each finite $R\subset X^d$.
This is equivalent to our previous, informal description.
By the consistency condition it is sufficient
to check the DLR equations for an increasing exhaustive sequence
of finite subsets of $X^d$.
%
Each kernel $\gamma_R$ restricts to a kernel from
$(\Omega,\mathcal F^\nabla_{R^\complement})$ to $(\Omega,\mathcal F^\nabla)$.
We shall write $\gamma_R^\nabla$ for this restriction.
%
A gradient measure $\mu\in\mathcal P(\Omega,\mathcal F^\nabla)$ is called a
\emph{gradient Gibbs measure} if
\[\mu=\mu\gamma_R^\nabla\]
for each finite subset $R$ of $X^d$.


\subsection{Existence and concentration}

\begin{theorem}
  \label{thm:existence_big_Gibbs_measures}
  For each slope $s\in\mathcal S$,
  there is a shift-invariant gradient Gibbs measure $\mu\in\mathcal P_\Theta(\Omega,\mathcal F^\nabla)$
  of slope $s$ such that, for any $\mathbf x,\mathbf y\in X^d$,
  we have the bounds
  \begin{enumerate}
    \item
    \label{thm:shift_inv:one}
    $\operatorname{Var}_\mu (f(\mathbf y)-f(\mathbf x))\leq (d+1)^2 n$,
    \item $\mu(f(\mathbf y)-f(\mathbf x)-s(\mathbf y-\mathbf x)\geq (d+1)a )\leq \exp -\frac{a^2}{2n}$ for all $a\geq 0$ whenever $n>0$,
    \item
    \label{thm:shift_inv:three}
    $\mu(f(\mathbf y)-f(\mathbf x)-s(\mathbf y-\mathbf x)\leq (d+1)a )\leq \exp -\frac{a^2}{2n}$ for all $a\leq 0$ whenever $n>0$,
  \end{enumerate}
  where $n=d_{(X^d,E^d)}(\mathbf x,\mathbf y)$.
\end{theorem}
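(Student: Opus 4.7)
I would obtain $\mu$ as a weak subsequential limit of uniform periodic measures. Using the approximation property of $\mathcal S_N$ noted after Definition of $\mathcal S_n$, pick a sequence $s_N\in\mathcal S_N$ with $s_N\to s$, and let $\mu_N$ denote the uniform probability measure on the finite set $\Omega(L_N,s_N)$, viewed as a gradient measure. By Lemma~\ref{lem:expectation_periodic}, each $\mu_N$ is $\Theta$-shift-invariant as a gradient measure and has slope $s_N$. Since the gradient $\nabla f$ takes values in the compact product space $\{-d,1\}^{E^d}$, the sequence $(\mu_N)$ is automatically tight in the weak topology on gradient measures and admits a subsequential weak limit $\mu$.

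Next, I would verify the four desired properties of $\mu$. Shift-invariance is preserved under weak limits because $\Theta$ acts continuously on the gradient configuration space. For the slope, each increment $\nabla f([\mathbf x],[\mathbf x+\mathbf e_i])$ is a bounded continuous function of the gradient, so its expectation passes to the weak limit and $\mu(\nabla f)=\lim_N s_N = s$. For the concentration bounds~\ref{thm:shift_inv:one}--\ref{thm:shift_inv:three}: by shift-invariance of $\mu_N$, the law of $f(\mathbf y)-f(\mathbf x)$ under $\mu_N$ coincides with the law of $f(\mathbf y-\mathbf x)-f(\mathbf 0)$, and Theorem~\ref{theorem:Azuma} applied at the point $\mathbf y-\mathbf x$ yields the asserted bounds for $\mu_N$ as soon as $N$ is large enough that $d_{(X^d,E^d)}(\mathbf y-\mathbf x,L_N)\geq d_{(X^d,E^d)}(\mathbf x,\mathbf y)$. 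The two tail inequalities then pass to the weak limit directly, and the variance bound passes by bounded convergence, since the Lipschitz condition gives the uniform almost-sure bound $|f(\mathbf y)-f(\mathbf x)|\leq (d+1)\,d_{(X^d,E^d)}(\mathbf x,\mathbf y)$.

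The main obstacle is verifying the gradient DLR equation $\mu=\mu\gamma_R^\nabla$ for each finite $R\subset X^d$. The strategy is that for $N$ large enough that $R$ lies inside a fundamental domain of $L_N$, the identity $\mu_N=\mu_N\gamma_R^\nabla$ holds exactly: conditioning the uniform measure $\mu_N$ on the exterior gradient values produces the uniform measure on $\Omega(R,f)$ by Lemma~\ref{lem:FBClemma}. It then suffices to argue that the operation $\nu\mapsto\nu\gamma_R^\nabla$ is continuous on the space of gradient measures in the weak topology. This holds because $\gamma_R^\nabla$ is a finite-range Markov kernel: it reads only finitely many boundary gradients (those indexed by the edges in $E^d(R)$ together with a finite collar around $R$) and outputs a distribution on the finite set $\Omega(R,\cdot)$ that depends continuously on this finite input. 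Taking the weak limit along the chosen subsequence then yields $\mu=\mu\gamma_R^\nabla$, completing the construction.
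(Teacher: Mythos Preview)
Your overall strategy matches the paper's: take $\mu$ as a subsequential limit of the uniform measures $\mu_N$ on $\Omega(L_N,s_N)$ with $s_N\to s$, and verify each property in the limit. The arguments for compactness, shift-invariance, slope, and the concentration bounds are correct and essentially identical to the paper's.

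The gap is in your DLR verification. Your claim that $\mu_N=\mu_N\gamma_R^\nabla$ holds exactly once $R$ fits in a fundamental domain is false. Samples from $\mu_N$ are $L_N$-periodic; conditioning on $\mathcal F_{R^\complement}^\nabla$ reveals the gradient on every translate $R+\mathbf y$ with $\mathbf y\in L_N\smallsetminus\{\mathbf 0\}$, and periodicity then determines the gradient on $R$ itself. The conditional law of $\mu_N$ given $\mathcal F_{R^\complement}^\nabla$ is therefore a point mass, not the uniform measure on $\Omega(R,f)$, and Lemma~\ref{lem:FBClemma} (which concerns non-periodic height functions) does not help. The paper is explicit about this: ``The distribution $\mu_n$ is not invariant under resampling $f$ on $R$.''

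The correct argument, which the paper supplies, is that $\mu_N$ \emph{is} invariant under resampling on $R$ together with the same modification on every periodic copy $R+\mathbf y$. For $N$ large enough that these copies lie outside a fixed finite window $S\supset R\cup\partial R$, this periodic resampling agrees with $\gamma_R$ on $\mathcal F_S$, so $\mu_N\gamma_R|_{\mathcal F_S}=\mu_N|_{\mathcal F_S}$. Your continuity argument then applies at the level of each finite $\mathcal F_S^\nabla$, which is all that is needed.
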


The \emph{topology of local convergence} or \emph{$\mathcal L$-topology} on $\mathcal P(\Omega,\mathcal X)$
is the coarsest topology that makes the evaluation map $\mu\mapsto\mu(A)$
continuous for every finite $R\subset X^d$ and for any $A\in\mathcal X\cap\mathcal F_R$.
Constructing (gradient) Gibbs measures on $\mathcal P(\Omega,\mathcal X)$ is much easier whenever choosing $\mathcal X=\mathcal F^\nabla$
and not $\mathcal X=\mathcal F$, because $\mathcal F_R^\nabla$ is finite for finite $R\subset X^d$---see the following
lemma.

\begin{lemma}
  The set $\mathcal P(\Omega,\mathcal F^\nabla)$ is compact in the topology
   of local convergence.
\end{lemma}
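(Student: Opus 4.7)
The plan is to identify $\mathcal P(\Omega,\mathcal F^\nabla)$ with the space of Borel probability measures on a suitable compact metrizable space, and then invoke the standard weak-$*$ compactness of such spaces. First I would work not with $\Omega$ itself but with the space $\Omega^\nabla$ of admissible gradients, namely the set of maps $\alpha:E^d\to\{1,-d\}$ that integrate to zero around every simplicial loop. By Subsection~\ref{subsection:tiling_of_a_stepped_surface} these are exactly the flows of the form $\nabla f$ for $f\in\Omega$. Equipped with the product topology inherited from $\{1,-d\}^{E^d}$, the space $\Omega^\nabla$ is compact: $\{1,-d\}^{E^d}$ is compact metrizable by Tychonoff (since $E^d$ is countable), and the conservativity condition cuts out a closed subset, being an intersection of constraints each depending on only finitely many coordinates.

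Next I would match the topologies. The map $f\mapsto\nabla f$ induces a natural bijection between $\mathcal P(\Omega,\mathcal F^\nabla)$ and the set $\mathcal P(\Omega^\nabla)$ of Borel probability measures on $\Omega^\nabla$: every $A\in\mathcal F^\nabla$ is of the form $\{f:\nabla f\in B\}$ for some Borel $B\subset\Omega^\nabla$, and conversely every such pullback belongs to $\mathcal F^\nabla$. Under this identification, a basic open set of the topology of local convergence, namely one given by convergence of $\mu(A)$ for $A\in\mathcal F^\nabla\cap\mathcal F_R$ with $R$ finite, corresponds to convergence of $\nu(B)$ for a cylinder event $B$ in $\Omega^\nabla$. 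These cylinders are clopen, so their indicators are continuous. Since $\Omega^\nabla$ is a closed subset of a Cantor-type space, continuous functions on it are uniform limits of cylinder functions; hence convergence on cylinder events is equivalent to weak convergence. Thus the topology of local convergence on $\mathcal P(\Omega,\mathcal F^\nabla)$ coincides with the weak-$*$ topology on $\mathcal P(\Omega^\nabla)$.

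With these identifications in place, compactness is standard: $\mathcal P(X)$ is weak-$*$ compact whenever $X$ is a compact metrizable space, by Banach--Alaoglu applied to $C(X)^*$ together with the fact that probability measures form a weak-$*$ closed subset. Alternatively, one can bypass the identification and work directly: for each finite $R$ the $\sigma$-algebra $\mathcal F_R^\nabla$ is finite, so restrictions of $\mu\in\mathcal P(\Omega,\mathcal F^\nabla)$ live in a finite-dimensional simplex $\Delta_R$, and Tychonoff gives compactness of $\prod_R\Delta_R$; the image of $\mathcal P(\Omega,\mathcal F^\nabla)$ is cut out by the closed consistency condition, and the main content—the extension of a consistent family to a countably additive measure on $\mathcal F^\nabla$—is exactly what the compactness of $\Omega^\nabla$ delivers. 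The only real subtlety is this extension/countable additivity step; everything else reduces to keeping track of definitions.
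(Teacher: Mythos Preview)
Your proof is correct and takes a genuinely different route from the paper's. The paper proceeds by a direct diagonalisation: given a sequence $(\mu_n)$, it uses the finiteness of each $\mathcal F_{\Gamma_m}^\nabla$ (for an exhaustive sequence of finite sets $\Gamma_m$) to extract, for each $m$, a subsequence convergent on that finite algebra, then diagonalises across $m$ and invokes the Kolmogorov extension theorem to assemble the limiting measure. Your primary argument instead identifies $(\Omega,\mathcal F^\nabla)$ with the Borel structure of a compact metrizable space $\Omega^\nabla$, matches the topology of local convergence with the weak topology, and appeals to Banach--Alaoglu.

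Both arguments are sound. Yours is more conceptual and places the result within general topological measure theory, at the cost of having to verify the identification of topologies; the paper's is more self-contained and elementary. Your alternative sketch (Tychonoff on products of finite-dimensional simplices plus an extension step) is in fact the paper's proof in slightly different dress: the diagonalisation is the sequential form of Tychonoff, and the Kolmogorov extension theorem is exactly the ``extension of a consistent family to a countably additive measure'' that you flag as the one real step. One cosmetic point: when you describe $\Omega^\nabla$ as maps $E^d\to\{1,-d\}$ you are implicitly orienting the undirected edges; it would be tidier to identify $\Omega^\nabla$ with the set $\Theta$ of tilings inside $\{0,1\}^{E^d}$, which is how the paper encodes gradient data, but this changes nothing of substance.
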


\begin{proof}
  The proof is entirely straightforward.
  Let $(\mu_n)_{n\in\mathbb N}$ denote a sequence of measures in $\mathcal P(\Omega,\mathcal F^\nabla)$ and let
   $(\Gamma_m)_{m\in\mathbb N}$ denote an increasing exhaustive sequence
  of finite subsets of $X^d$.
  Fix $m\in\mathbb N$.
  The $\sigma$-algebra
   $\mathcal F_{\Gamma_m}^\nabla$ is finite and therefore there exists a subsequence $(k_n)_{n\in\mathbb N}\subset \mathbb N$
  such that $\mu_{k_n}$ converges on  $\mathcal F_{\Gamma_m}^\nabla$   as $n\to\infty$.
  By a standard diagonalisation argument we may assume that convergence
 occurs for all $m\in\mathbb N$.
  The limiting measure exists by the Kolmogorov extension theorem.
\end{proof}


\begin{proof}[Proof of Theorem~\ref{thm:existence_big_Gibbs_measures}]
  Let $s\in\mathcal S$ be the slope of interest.
  Let $(s_n)_{n\in\mathbb N}$ be a sequence of slopes
  converging to $s$ with $s_n\in \mathcal S_n$ for every $n$.
  Write $\mu_n$ for the uniform probability measure on
  $\Omega(L_n,s_n)$, for every $n\in\mathbb N$.
  Each measure $\mu_n$ restricts to a shift-invariant gradient measure
  in $\mathcal P_\Theta(\Omega,\mathcal F^\nabla)$,
  and $s(\mu_n)=s_n$.

  Now apply the previous lemma to obtain a subsequence $(k_n)_{n\in\mathbb N}$
  along which the sequence of gradient measures $(\mu_n)_{n\in\mathbb N}$ converges in the topology of local convergence,
  say to $\mu\in\mathcal P(\Omega,\mathcal F^\nabla)$.
  The limit $\mu$ must be shift-invariant as all measures $(\mu_n)_{n\in\mathbb N}$ are
  shift-invariant.
  At each vertex $\mathbf x\in X^d$ we have
  \[
    \mu(f(\mathbf x)-f(\mathbf 0))
    =\lim_{n\to\infty}\mu_{k_n}(f(\mathbf x)-f(\mathbf 0))
    =\lim_{n\to\infty}s_{k_n}(\mathbf x)
    =s(\mathbf x),\]
  which means that $s(\mu)=s$.
  One shows similarly that \eqref{thm:shift_inv:one}--\eqref{thm:shift_inv:three} follow
  from Theorem~\ref{theorem:Azuma}.

  It suffices to prove that the gradient measure $\mu$ is a Gibbs measure,
  that is,
  that
  $\mu\gamma_R^\nabla=\mu$ for every finite $R\subset X^d$.
  Fix a finite subset $R\subset X^d$.
  Now suppose that $\mu\gamma_R^\nabla$ equals $\mu$
  on $\mathcal F_S^\nabla$
  for any finite $S\subset X^d$.
  Then the two measures must be the same, by
  the uniqueness statement of the Kolmogorov extension theorem.
  It thus suffices to prove that $\mu\gamma_R^\nabla$ equals $\mu$
  on $\mathcal F_S^\nabla$
  for any finite $S\subset X^d$.
  We may assume that $R\subset S$ and $\partial R\subset S$ by expanding $S$ if necessary.
  By using shift-invariance,
  we may finally assume that $\0\not\in S$.

  We make the stronger claim that already in the non-gradient setting and before taking limits, we have
  \begin{equation}
    \label{eq:nonGradientStrongerClaim}
    \mu_n\gamma_R|_{\mathcal F_S}=\mu_n|_{\mathcal F_S}
  \end{equation}
  for $n$ sufficiently large.
    The distribution $\mu_n$ is not invariant under resampling $f$
  on $R$.
  However, if $R+\x$ and $R+\y$
  are disjoint and not adjacent for
  any $\x,\y\in L_n$ distinct,
  then $\mu_n$ is invariant under resampling $f$
  on $R$, then translating
  this modification to $R+\x$
  for each $\x\in L_n\smallsetminus\{\0\}$.
  Thus, if $n$ is so large that $S$
  and $R+\x$ are disjoint for any $\x\in L_n\smallsetminus\{\0\}$,
  then~\eqref{eq:nonGradientStrongerClaim}
  holds true because the additional modifications
  do not affect the values of $\mu_n$ on the $\sigma$-algebra $\mathcal F_S$.
  This proves the claim.
\end{proof}

For any $n\in\mathbb N$, let $\Pi_n$ denote a \emph{centred box} of sides $2n$,
that is,
\[\Pi_n:=\{a_1\mathbf g_1+\dots+a_d\mathbf g_d:-n\leq a_1,\dots,a_d<n\}\subset X^d.\]
Note that $|\Pi_n|=(2n)^d$.
\label{eq:pi_n_def}

\begin{proposition}
  \label{propo:limitFlatness}
  Let $\mu$ denote a measure of Theorem~\ref{thm:existence_big_Gibbs_measures} of slope $s\in\mathcal S$.
   Then $\mu$-almost surely
  \begin{equation}
    \label{eq:flat_conc}
    \lim_{n\to\infty}\frac 1n \| (f-f(\mathbf 0))|_{\Pi_n}-s|_{\Pi_n}\|_\infty = 0.
    \end{equation}
\end{proposition}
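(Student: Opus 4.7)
The plan is to show that the claimed flat concentration follows directly from the Gaussian-type tail bounds in items~\ref{thm:shift_inv:one}--\ref{thm:shift_inv:three} of Theorem~\ref{thm:existence_big_Gibbs_measures}, combined with a union bound over $\Pi_n$ and the Borel--Cantelli lemma. Write $g(\x) := f(\x) - f(\0) - s(\x)$; this is well defined under $\mu$ since $g$ is $\mathcal F^\nabla$-measurable. The goal is then to prove that $\frac 1n \max_{\x \in \Pi_n} |g(\x)| \to 0$ almost surely.

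First I would control the graph distance: for $\x = a_1 \g_1 + \cdots + a_d \g_d \in \Pi_n$ with $|a_i| < n$, one has $d_{(X^d,E^d)}(\0,\x) \leq |a_1| + \cdots + |a_d| < dn$. Applying items~\ref{thm:shift_inv:one}--\ref{thm:shift_inv:three} of Theorem~\ref{thm:existence_big_Gibbs_measures} with $\mathbf y = \x$ and $\mathbf x = \0$, for every $\epsilon > 0$ and every $\x \in \Pi_n$,
\[
  \mu\bigl(|g(\x)| \geq \epsilon n\bigr) \;\leq\; 2 \exp\!\left(-\frac{\epsilon^2 n^2}{2 (d+1)^2 \cdot d n}\right) \;=\; 2\exp(-c \epsilon^2 n),
\]
where $c = 1/(2d(d+1)^2)$ depends only on the dimension.

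A union bound over the $(2n)^d$ vertices of $\Pi_n$ then yields
\[
  \mu\!\left(\max_{\x \in \Pi_n} |g(\x)| \geq \epsilon n\right) \;\leq\; 2 (2n)^d \exp(-c \epsilon^2 n),
\]
which is summable in $n$ for every $\epsilon > 0$. The Borel--Cantelli lemma implies that, for each fixed $\epsilon > 0$, $\mu$-almost surely the event $\{\max_{\x \in \Pi_n}|g(\x)| \geq \epsilon n\}$ occurs for only finitely many $n$. Intersecting the resulting full-measure events along $\epsilon = 1/k$ for $k \in \mathbb N$ produces a single full-measure event on which $\limsup_n \frac 1n \max_{\x \in \Pi_n} |g(\x)| = 0$, which is the claim.

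There is no genuine obstacle here; the only mildly delicate point is confirming that the polynomial prefactor $(2n)^d$ is swamped by the Gaussian-in-$n$ tail, which happens precisely because the concentration scale is $\sqrt n$ while we are measuring deviations of order $n$. Thus the macroscopic flatness is a direct corollary of the pointwise concentration already established.
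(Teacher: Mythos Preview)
Your proposal is correct and follows exactly the approach the paper indicates: the paper's own proof is the single sentence ``This follows from a union bound and the inequalities in Theorem~\ref{thm:existence_big_Gibbs_measures},'' and you have simply spelled out those details (tail bound, polynomial-in-$n$ union bound, Borel--Cantelli). The only cosmetic slip is that $|a_i|$ can equal $n$, so the distance bound should read $d_{(X^d,E^d)}(\0,\x)\leq dn$ rather than $<dn$; this does not affect the argument.
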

This follows from a union bound and the inequalities in Theorem~\ref{thm:existence_big_Gibbs_measures}.

\section{The surface tension and the variational principle}
\label{sec:Statement of the variational principle}

The purpose of this section is to give an overview
of three closely related concepts
which describe the macroscopic behaviour of the model.
These motivate the study of strict convexity
of the surface tension in Section~\ref{sec:strict}.
First, there is indeed the surface tension,
which describes the asymptotic number of height
functions approximating a certain slope.
Second, there is the large deviations principle,
which describes the asymptotic number of height
functions approximating an arbitrary continuous profile.
The rate function is the integral of the surface
tension over the gradient of the continuous profile of interest.
Third, there is the variational principle,
which is a direct corollary of the large deviations principle,
and describes the typical macroscopic behaviour of the random height function.
The surface tension is usually convex,
making that the rate function in the large deviations principle
is also convex.
In the next section, we shall also prove that the surface tension
is strictly convex,
which implies that the rate function has a unique minimiser,
which in turn implies concentration around a single continuous
profile in the variational principle.
For the results in this section, we refer to~\cite{SHEFFIELD} and~\cite{MINIMAL}.

\subsection{The surface tension}

\begin{definition}
    Recall the definition of $\lfloor f\rfloor$
    for Lipschitz functions $f:H\to\R$
    on Page~\pageref{pagerefLipfloor}.
    Recall also the definition of $\Pi_n\subset X^d$
    at the end of the previous section (Page~\pageref{eq:pi_n_def}).
    The \emph{surface tension} is the function $\sigma:\mathcal S\to\R$ defined by
    \[
        \sigma(s):=\lim_{n\to\infty}-\frac{1}{|\Pi_n|}\log|\Omega(\Pi_n,\lfloor s\rfloor)|.
    \]
\end{definition}

For convergence of the limit in the definition of $\sigma(s)$,
we refer to Section~4 in~\cite{MINIMAL}.
The argument is effectively a supermultiplicativity argument:
if $A,B\subset X^d$ are finite and disjoint with no vertex of $A$ adjacent
to a vertex of $B$,
then
$|\Omega(A\cup B,f)|=|\Omega(A,f)|\cdot|\Omega(B,f)|$,
and if $A\subset B$,
then $|\Omega(A,f)|\leq |\Omega(B,f)|$.
In fact, the definition of $\sigma(s)$
is stable under modifications of order $o(n)$
to $\lfloor s\rfloor$ as $n\to\infty$;
see Lemma~4.5 in~\cite{MINIMAL} for the following result.

\begin{theorem}
    \label{thm:conv_to_sigma_stable}
    If $s\in\mathcal S$, and if $(f_n)_{n\in\mathbb N}\subset \Omega$
    satisfies
    $\|f_n|_{\Pi_n}-s|_{\Pi_n}\|_\infty=o(n)$
    as $n\to\infty$,
    then
    \[
        \sigma(s)=\lim_{n\to\infty}-\frac1{|\Pi_n|}\log|\Omega(\Pi_n,f_n)|.
    \]
\end{theorem}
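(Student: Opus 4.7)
The plan is to sandwich $|\Omega(\Pi_n, f_n)|$ between $|\Omega(\Pi_{n\pm k_n}, \lfloor s\rfloor)|$ for a buffer width $k_n$ satisfying $k_n \to \infty$, $k_n = o(n)$, and $k_n$ large relative to $\epsilon_n := \|f_n|_{\Pi_n} - s|_{\Pi_n}\|_\infty$. The structural input I would use repeatedly is that $|\Omega(R, g)|$ depends on $g$ only through its restriction to the outer boundary $\partial^{\mathrm{out}} R$: two boundary conditions agreeing on $\partial^{\mathrm{out}} R$ give identical counts, via the bijection that patches any extension on $R$ with the ambient data.

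For the upper bound $\liminf_n (-|\Pi_n|^{-1}\log|\Omega(\Pi_n,f_n)|) \geq \sigma(s)$, I would build a single bridge height function $\tilde f_n \in \Omega$ coinciding with $f_n$ on $\Pi_n \cup \partial^{\mathrm{out}} \Pi_n$ and with $\lfloor s\rfloor + C_n$ on $X^d \setminus \Pi_{n+k_n}$, for a suitable parity-preserving constant $C_n \in (d+1)\mathbb Z$; the discrete Kirszbraun theorem (Lemma~\ref{lem:NEW_Kirszbraun}) produces $\tilde f_n$ once the prescribed data is verified to be Lipschitz in the asymmetric norm $\|\cdot\|_+$. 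With $\tilde f_n$ in hand, the chain
\begin{equation*}
|\Omega(\Pi_n, f_n)| \;=\; |\Omega(\Pi_n, \tilde f_n)| \;\leq\; |\Omega(\Pi_{n+k_n}, \tilde f_n)| \;=\; |\Omega(\Pi_{n+k_n}, \lfloor s\rfloor + C_n)| \;=\; |\Omega(\Pi_{n+k_n}, \lfloor s\rfloor)|
\end{equation*}
proceeds by, in order, outer-boundary dependence on $\partial^{\mathrm{out}} \Pi_n$; the obvious inclusion coming from $\Pi_n \subset \Pi_{n+k_n}$ (any height function fixed to $\tilde f_n$ off $\Pi_n$ is a fortiori fixed off $\Pi_{n+k_n}$); outer-boundary dependence on $\partial^{\mathrm{out}} \Pi_{n+k_n}$; and translation invariance under a parity-preserving constant. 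Dividing $-\log$ by $|\Pi_n|$ and using $|\Pi_{n+k_n}|/|\Pi_n| = (1 + k_n/n)^d \to 1$ together with the defining limit of $\sigma(s)$ on $\Pi_{n+k_n}$ completes the bound. The opposite inequality $\limsup \leq \sigma(s)$ is symmetric: build $\tilde f_n$ coinciding with $\lfloor s\rfloor + C_n$ on $\Pi_{n-k_n} \cup \partial^{\mathrm{out}} \Pi_{n-k_n}$ and with $f_n$ on $X^d \setminus \Pi_n$, yielding $|\Omega(\Pi_{n-k_n}, \lfloor s\rfloor)| \leq |\Omega(\Pi_n, f_n)|$ by the analogous chain.

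The hard part is producing the bridge $\tilde f_n$: the glued data must satisfy $\tilde f_n(y) - \tilde f_n(x) \leq \|y-x\|_+$ for every cross-pair $(x,y)$ with $x$ in the inner anchor and $y$ in the outer. Substituting $f_n = s + O(\epsilon_n)$ and $\lfloor s\rfloor = s + O(1)$, this reduces to constraints of the form
\begin{equation*}
s(y) - s(x) + C_n + O(\epsilon_n+1) \;\leq\; \|y-x\|_+
\end{equation*}
and its reverse with $x,y$ swapped. Since $s$ is only Lipschitz and can achieve the bound $\|y-x\|_+$ in certain directions, locating a single parity-preserving $C_n$ that satisfies both directions requires margin, which is created by taking the graph distance between the two anchors (of order $k_n$) large enough to dominate the $O(\epsilon_n+1)$ error. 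For $k_n \gg \epsilon_n$ a careful choice of $C_n$ of order $\epsilon_n$ suffices; the precise verification is bookkeeping inside the structure of $\mathcal S$, entirely analogous to the arguments leading to Lemma~4.5 of~\cite{MINIMAL}. Once $\tilde f_n$ has been constructed the remaining arithmetic in the two chains is routine.
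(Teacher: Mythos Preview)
The paper does not actually prove this theorem: it simply cites Lemma~4.5 in~\cite{MINIMAL}. Your sandwich argument---building a bridge height function via the discrete Kirszbraun theorem over a buffer annulus of width $k_n$ with $\epsilon_n \ll k_n \ll n$, then using boundary-dependence and monotonicity in the region---is correct and is precisely the approach taken in that reference, which you yourself invoke.
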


The previous result implies immediately that $\sigma$ is continuous,
see also Lemma~4.3 in~\cite{MINIMAL}.

\begin{theorem}
   The surface tension $\sigma:\mathcal S\to\mathbb R$ is continuous.
\end{theorem}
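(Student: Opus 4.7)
The strategy is to establish the stronger local Lipschitz estimate $|\sigma(s)-\sigma(s')|\leq C\|s-s'\|$ on $\mathcal S$, via an injection argument between the configuration spaces with different boundary conditions. First observe that $\sigma$ is bounded on $\mathcal S$: the inclusion $\lfloor s\rfloor\in\Omega(\Pi_n,\lfloor s\rfloor)$ yields $\sigma(s)\leq 0$, and a crude count of height functions on $\Pi_n$ (bounded by $2^{|E^d(\Pi_n)|}$ via the tiling description) gives a uniform lower bound $\sigma(s)\geq -C_0$.

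Given $s,s'\in\mathcal S$ with $\epsilon:=\|s-s'\|$ small, fix $m=m(n):=\lceil Cn\epsilon\rceil$ with $C$ a dimensional constant to be chosen. I would construct an injection $\psi:\Omega(\Pi_n,\lfloor s'\rfloor)\to\Omega(\Pi_{n+m},\lfloor s\rfloor)$ as follows: $\psi(f)$ equals $f$ on $\Pi_n$, equals $\lfloor s\rfloor$ on $\Pi_{n+m}^\complement$, and is filled in on the annulus $\Pi_{n+m}\smallsetminus\Pi_n$ by an arbitrary extension supplied by Lemma~\ref{lem:NEW_Kirszbraun}. The Kirszbraun-type extension requires Lipschitz compatibility between the two pieces of prescribed data. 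Writing, for $\mathbf a\in\Pi_n$ and $\mathbf b\in\Pi_{n+m}^\complement$,
\[
f(\mathbf a)-\lfloor s\rfloor(\mathbf b)=\bigl[f(\mathbf a)-\lfloor s'\rfloor(\mathbf b)\bigr]+\bigl[\lfloor s'\rfloor(\mathbf b)-\lfloor s\rfloor(\mathbf b)\bigr],
\]
the first bracket is at most $\|\mathbf a-\mathbf b\|_+$ (because $f$ itself is a height function and $f(\mathbf b)=\lfloor s'\rfloor(\mathbf b)$), and the second is bounded by $|(s'-s)(\mathbf b)|+O(1)\leq\epsilon\|\mathbf b\|+O(1)$. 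For $\mathbf b$ near $\partial\Pi_{n+m}$ this is of order $\epsilon(n+m)+O(1)$, which is absorbed into $\|\mathbf a-\mathbf b\|_+\gtrsim m$ once $C$ is taken large enough; for $\mathbf b$ far from $\Pi_{n+m}$ the term $\epsilon\|\mathbf b\|$ is dominated by $\|\mathbf a-\mathbf b\|_+$. A bounded additive shift aligns the parity condition.

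Because $\psi(f)|_{\Pi_n}=f|_{\Pi_n}$ and $f$ is determined by its values on $\Pi_n$, the map $\psi$ is injective, so $|\Omega(\Pi_n,\lfloor s'\rfloor)|\leq|\Omega(\Pi_{n+m},\lfloor s\rfloor)|$. Taking $-|\Pi_n|^{-1}\log$ of both sides and passing to the limit $n\to\infty$ with $m/n\to C\epsilon$ gives
\[
\sigma(s')\;\geq\;\frac{|\Pi_{n+m}|}{|\Pi_n|}\,\sigma(s)\;\longrightarrow\;(1+C\epsilon)^d\,\sigma(s),
\]
where the convergence on the right uses Theorem~\ref{thm:conv_to_sigma_stable} applied along the subsequence $n+m$. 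Combined with the boundedness of $\sigma$, this yields $\sigma(s)-\sigma(s')\leq C'\epsilon$. Swapping the roles of $s$ and $s'$ gives the reverse inequality, establishing $|\sigma(s)-\sigma(s')|\leq C'\epsilon$ and hence continuity.

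The main obstacle is the Kirszbraun step: one must verify the asymmetric Lipschitz compatibility \emph{uniformly} over all pairs $(\mathbf a,\mathbf b)\in\Pi_n\times\Pi_{n+m}^\complement$, including the unbounded regime $\|\mathbf b\|\to\infty$, and handle the parity alignment. Both reduce to choosing the constant $C$ in the annulus width appropriately; once this is done, the rest of the argument is essentially formal.
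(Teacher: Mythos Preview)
Your injection strategy has a genuine gap at the Kirszbraun step, and it is not merely a matter of choosing the annulus constant $C$ correctly. In your decomposition
\[
f(\mathbf a)-\lfloor s\rfloor(\mathbf b)=\bigl[f(\mathbf a)-\lfloor s'\rfloor(\mathbf b)\bigr]+\bigl[\lfloor s'\rfloor(\mathbf b)-\lfloor s\rfloor(\mathbf b)\bigr],
\]
the first bracket is bounded by $\|\mathbf a-\mathbf b\|_+$, but this inequality can be \emph{tight}: it is an equality whenever $f$ realises the extremal slope on the segment from $\mathbf b$ to $\mathbf a$. There is then no room at all to absorb the second bracket, regardless of how large you take $m$. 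Your sentence ``which is absorbed into $\|\mathbf a-\mathbf b\|_+\gtrsim m$'' implicitly assumes slack of order $m$ in the first bracket, which is not there in general.

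To see that the failure is real rather than cosmetic, take $s'$ to be a vertex $s^i$ of $\mathcal S$. Then $\lfloor s'\rfloor$ is the affine height function of that slope and $\Omega(\Pi_{n+m},\lfloor s'\rfloor)$ is a singleton. For $s$ in the interior of $\mathcal S$ and $n$ large, $|\Omega(\Pi_n,\lfloor s\rfloor)|$ is exponentially large in $n^d$, so the injection $\Omega(\Pi_n,\lfloor s\rfloor)\hookrightarrow\Omega(\Pi_{n+m},\lfloor s'\rfloor)$ you need for the reversed inequality simply does not exist. Your argument does go through on compact subsets of the interior of $\mathcal S$: there the slope has distance $\delta>0$ to $\partial\mathcal S$, the first bracket has slack $\gtrsim\delta m$, and choosing $m\gtrsim\epsilon n/\delta$ works, yielding local Lipschitz continuity in the interior. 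But continuity up to and on $\partial\mathcal S$ is not reachable this way.

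The paper bypasses this entirely: continuity is read off directly from Theorem~\ref{thm:conv_to_sigma_stable}. Given $s_k\to s$ in $\mathcal S$, set $f_n:=\lfloor s_{k(n)}\rfloor$ for a suitable diagonal choice $k(n)\to\infty$; then $\|f_n|_{\Pi_n}-s|_{\Pi_n}\|_\infty\leq Cn\|s_{k(n)}-s\|+O(1)=o(n)$, and the stability theorem forces $-|\Pi_n|^{-1}\log|\Omega(\Pi_n,f_n)|\to\sigma(s)$, hence $\sigma(s_k)\to\sigma(s)$. This argument is indifferent to whether $s$ lies on $\partial\mathcal S$.
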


\subsection{The large deviations principle}

Before stating the large deviations principle,
we must introduce a suitable topological space to work in,
and we must specify how a sequence of fixed boundary conditions converges to a
continuous boundary profile.
This is the purpose of the following definition.

\begin{definition}
    Write $\operatorname{Lip}(D)$
    for the collection of real-valued Lipschitz functions
    on $D$ for any $D\subset H$.
    A \emph{domain} is a bounded open set $D\subset H$
    such that $\partial D$ has zero Lebesgue measure.
    A \emph{boundary profile} is a pair $(D,b)$
    where $D$ is a domain and $b\in\operatorname{Lip}(\partial D)$.
    An \emph{approximation} of $(D,b)$
    is a sequence of pairs $((D_n,b_n))_{n\in\mathbb N}$
    such that $D_n\subset X^d$ is finite and $b_n\in\Omega$
    for any $n\in\mathbb N$,
    and such that
    \[
        \frac1n D_n\to D,\qquad \frac1n\operatorname{Graph}b_n|_{\partial D_n}\to\operatorname{Graph}b
    \]
    in the Hausdorff topologies on $H$ and $H\times\mathbb R$
    respectively as $n\to\infty$.

    If $(D,b)$ is a boundary profile with approximation $((D_n,b_n))_{n\in\mathbb N}$,
    then write $(\gamma_n)_{n\in\mathbb N}$
    for the sequence of measures
    defined by $\gamma_n:=\gamma_{D_n}(\cdot,b_n)$,
    the uniform probability measure in the finite set $\Omega(D_n,b_n)$.
    The topological space for the large deviations principle associated
    with this sequence is the set $\operatorname{Lip}(\bar D)$ endowed
    with the topology of uniform convergence---which is equivalent to the topology
    of pointwise convergence as $\bar D$ is compact.
    We must bring all samples from each measure $\gamma_n$
    to the space $\operatorname{Lip}(\bar D)$ for the large deviations principle to make
    sense.
    For each $n\in\mathbb N$, define the map $K_n:\Omega\to\operatorname{Lip}(\bar D)$
    as follows.
    First, for each $f\in\Omega$, define $\bar f$ to be the smallest Lipschitz
    extension of $f$ to $H$.
    Define each map $K_n$ by
    \[
        K_n(f):\bar D\to\mathbb R,\, x\mapsto \frac1n \bar f(nx).
    \]
    Finally, let $\lambda$ denote the unique translation-invariant measure
    on $H$ for which
    \[
       \{a_1\g_1+\dots+a_d\g_d:a_1,\dots,a_d\in [0,1]\}\subset H
    \]
    has measure one.
\end{definition}

\begin{theorem}
    \label{thm:ldp_new}
    Let $(D,b)$ denote a boundary profile with approximation $((D_n,b_n))_{n\in\mathbb N}$
    and associated measures $\gamma_n:=\gamma_{D_n}(\cdot,b_n)$.
    Write $\gamma_n^*$ for the pushforward of $\gamma_n$
    along $K_n$.
    Then the sequence
    of measures $(\gamma_n^*)_{n\in\mathbb N}$ satisfies a large deviations principle
    in the topological space $\operatorname{Lip}(\bar D)$
    with speed $n^d$ and rate function
    \[
        I(f):=-P(D,b)+
        \begin{cases}
            \int_D \sigma(\nabla f)d\lambda &\text{if $f|_{\partial D}=b$,}\\
            \infty &\text{otherwise,}
        \end{cases}
    \]
    where $P(D,b)$ is called the \emph{pressure}
    of the boundary profile $(D,b)$,
    defined to be the unique constant such that the minimum of $I$
    is $0$,
    and equal to
    \[
        P(D,b)=\lim_{n\to\infty}-\frac1{n^d}\log|\Omega(D_n,b_n)|.
    \]
\end{theorem}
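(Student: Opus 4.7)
The plan is to establish this large deviations principle by the usual route for random surface models: an exponential tightness argument reduces matters to a local rate, which is computed by decomposing the domain into small cells on which the height function is approximately affine, and applying the definition of $\sigma$ together with the concentration estimates from Theorem~\ref{theorem:Azuma}, monotonicity from Theorem~\ref{stabmon}, and the stability result Theorem~\ref{thm:conv_to_sigma_stable}.

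First, observe that every $K_n(f)$ is $\|\cdot\|_+$-Lipschitz by construction, so the pushforward measures $\gamma_n^*$ are supported on a uniformly equicontinuous family in $\operatorname{Lip}(\bar D)$. By Arzelà-Ascoli this family is relatively compact in the topology of uniform convergence, which yields exponential tightness for free. Moreover, since $b_n$ converges on $\partial D_n$ and height functions are Lipschitz, any weak limit $f$ of $K_n(g_n)$ with $g_n \in \Omega(D_n,b_n)$ must satisfy $f|_{\partial D} = b$; configurations producing a limit with $f|_{\partial D} \neq b$ have probability zero for all $n$ sufficiently large, explaining the infinite value of $I$ in that case. It therefore suffices to establish the local LDP: for every $f \in \operatorname{Lip}(\bar D)$ with $f|_{\partial D}=b$, the quantity $\lim_{\epsilon\to 0}\lim_{n\to\infty} \frac{1}{n^d}\log\gamma_n^*(B_\epsilon(f))$ equals $-\int_D \sigma(\nabla f)\,d\lambda + P(D,b)$.

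For the upper bound I would partition $\bar D$ into small cells $(D^k)_{k\leq N}$ of diameter at most $\delta$ on which $f$ is within $\eta$ of an affine function $L^k$ of slope $s^k\in\mathcal S$, and mirror this with a discrete partition $(D_n^k)$ of $D_n$ separated by thin buffer corridors $B_n$. A configuration $g\in\Omega(D_n,b_n)$ with $K_n(g)\in B_\epsilon(f)$ is determined by its values on $B_n\cup\partial D_n$ together with its restriction to each cell; since $|B_n| = O(n^{d-1}/\delta)$ and each value lies in a polynomially bounded range, the entropy of the boundary data is $o(n^d)$. For each admissible boundary datum, the values on the buffer must lie within $O(\epsilon n)$ of $nL^k$, and Theorem~\ref{thm:conv_to_sigma_stable} bounds $|\Omega(D_n^k,\cdot)|$ by $\exp(-n^d\sigma(s^k)\lambda(D^k)+o(n^d))$. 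Summing gives the upper bound $-\sum_k\sigma(s^k)\lambda(D^k)+o_\delta(1)+o_\epsilon(1)$, which converges to $-\int_D\sigma(\nabla f)d\lambda$ as $\delta,\epsilon\to 0$ by continuity of $\sigma$ and dominated convergence (approximating a general Lipschitz $f$ by its piecewise affine mollifications). For the lower bound, one constructs a nearly optimal family of configurations by sampling, independently on each cell $D_n^k$, a uniformly random element of $\Omega(D_n^k,h_n^k)$ where $h_n^k$ is any height function approximating $nL^k$ within $o(n)$; the number of such configurations is $\exp(-n^d\sigma(s^k)\lambda(D^k)+o(n^d))$ by the definition of $\sigma$ and Theorem~\ref{thm:conv_to_sigma_stable}. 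The gluing across cells is handled by first fixing compatible boundary data on the buffer corridors using Lemma~\ref{lem:NEW_Kirszbraun} to extend the discretised $(L^k)$ into a single global height function, then using the monotonicity theorem to couple cell-wise uniform measures under these fixed boundary conditions into a single sample from $\gamma_n$.

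Combining the two bounds yields the local rate $-\int_D\sigma(\nabla f)d\lambda + P(D,b)$, where the additive constant $P(D,b)$ arises since $\gamma_n^*$ is normalised to a probability measure. Its formula as the limit of $-n^{-d}\log|\Omega(D_n,b_n)|$ then follows by applying the local LDP at the minimiser of $\int_D\sigma(\nabla\cdot)d\lambda$; existence of such a minimiser is standard by lower semicontinuity of the functional and compactness of $\{f\in\operatorname{Lip}(\bar D):f|_{\partial D}=b\}$. The hardest step, I expect, is the gluing argument in the lower bound: converting independent cell samples into a single height function on all of $D_n$ requires that the free boundary between cells be controlled, and I would rely on Theorem~\ref{stabmon} together with the Azuma-Hoeffding estimate of Theorem~\ref{theorem:Azuma} to show that the cost of matching the two sides of each buffer corridor is subexponential in $n^d$.
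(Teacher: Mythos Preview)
The paper does not actually prove this theorem: immediately after the statement it refers the reader to Sheffield's monograph and to Theorem~2.17 of the reference~\cite{MINIMAL} for a self-contained proof. So there is no in-paper argument to compare against; your sketch is essentially the standard route taken in those references (Arzel\`a--Ascoli for exponential tightness, then a cell decomposition with Theorem~\ref{thm:conv_to_sigma_stable} supplying the per-cell rate), and at that level it is correct.

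One point in your lower bound is more complicated than it needs to be. Once you have used Lemma~\ref{lem:NEW_Kirszbraun} to produce a single height function $h_n$ that agrees with the discretised affine pieces on the buffer corridors, the Markov property makes the configurations on the disjoint cells genuinely independent given their boundaries: the count factorises exactly as $\prod_k|\Omega(D_n^k,h_n)|$, and each factor is controlled by Theorem~\ref{thm:conv_to_sigma_stable}. No coupling via Theorem~\ref{stabmon} and no Azuma--Hoeffding estimate is required at this step; the buffer corridors have volume $o(n^d)$, so fixing $h_n$ on them costs nothing at the exponential scale. Monotonicity and the concentration bounds enter only in the upper bound, where one must argue that \emph{every} boundary datum within $O(\epsilon n)$ of the target profile gives at most $\exp(-\sigma(s^k)\lambda(D^k)n^d+o(n^d))$ extensions, and that is already covered by the stability statement of Theorem~\ref{thm:conv_to_sigma_stable}.
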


We shall continue using the definitions of $I$ and $P(D,b)$ in the sequel.
The large deviations principle was proven in a much more general
setting by Sheffield in~\cite{SHEFFIELD}.
The large deviations principle with boundary conditions is stated in Subsection~7.3.2.
The large deviations principle in~\cite{SHEFFIELD}
does not only address the macroscopic profile of each height functions but also
its ``local statistics'' within macroscopic regions,
something we are not concerned with here.
In~\cite{SHEFFIELD} it is required that the boundary profile is ``not taut''.
This requirement is however only necessary to understand the local statistics,
and may be omitted when one is interested in the macroscopic profile only.
For a more recent and elementary proof of the large deviations principle
for the macroscopic profile only, we refer to Theorem~2.17 in~\cite{MINIMAL}.

\subsection{The variational principle}

The variational principle is a direct corollary of the large deviations principle.
Note that the set of minimisers of $I$ in Theorem~\ref{thm:ldp_new}
is exactly the set of minimisers of
\[
    \int_D\sigma(\nabla f)d\lambda
\]
over all functions $f\in\operatorname{Lip}(\bar D)$
which restrict to $b$ on $\partial D$.

\begin{theorem}
    \label{thm:vp_new}
    Assume the setting of Theorem~\ref{thm:ldp_new}.
    Let $A$ denote an open neighbourhood of $\{I=0\}\subset \operatorname{Lip}(\bar D)$.
    Then
    $\gamma_n^*(A)\to 1$ as $n\to\infty$.
    In particular, if $\sigma$ is strictly convex,
    then $I$ has a unique minimiser $f^*\in \operatorname{Lip}(\bar D)$,
    and in that case $\gamma_n^*(A)\to 1$ as $n\to\infty$
    for any open neighbourhood $A$ of $f^*$.
\end{theorem}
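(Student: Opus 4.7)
The plan is to derive both assertions directly from Theorem~\ref{thm:ldp_new} together with the fact that $I$ is a \emph{good} rate function. I would first verify that the sub-level sets $\{I\leq c\}$ are compact in $\operatorname{Lip}(\bar D)$ with the topology of uniform convergence: every $f$ in such a set restricts to $b$ on $\partial D$ (hence is uniformly bounded on $\bar D$) and is Lipschitz with respect to $\|\cdot\|_+$ with a uniform Lipschitz constant, so compactness follows from Arzel\`a--Ascoli. Note in particular that $\{I=0\}$ is non-empty by the very definition of $P(D,b)$, and that $I$ is lower semicontinuous (by continuity of $\sigma$ together with Fatou applied to $f\mapsto\int_D \sigma(\nabla f)\,d\lambda$ on uniformly Lipschitz families, where gradients converge weakly-$*$ in $L^\infty$).

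For the first assertion, let $A$ be an open neighbourhood of $\{I=0\}$, so that $A^\complement$ is closed and disjoint from $\{I=0\}$. I claim $\alpha:=\inf_{A^\complement}I>0$. If instead $\alpha=0$, a minimising sequence $(f_k)\subset A^\complement$ with $I(f_k)\downarrow 0$ would eventually lie in the compact set $\{I\leq 1\}$, hence admit a uniformly convergent subsequence with limit $f^*\in A^\complement$. Lower semicontinuity would give $I(f^*)=0$, contradicting $A^\complement\cap\{I=0\}=\varnothing$. The large deviations upper bound applied to the closed set $A^\complement$ therefore yields
\[
\limsup_{n\to\infty}\frac{1}{n^d}\log\gamma_n^*(A^\complement)\leq-\alpha<0,
\]
which implies $\gamma_n^*(A^\complement)\to 0$, that is, $\gamma_n^*(A)\to 1$.

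For the second assertion, suppose $\sigma$ is strictly convex on $\mathcal S$. I would show that the functional $J(f):=\int_D \sigma(\nabla f)\,d\lambda$ is strictly convex on the convex set of functions in $\operatorname{Lip}(\bar D)$ that restrict to $b$ on $\partial D$. Indeed, if $f_1$ and $f_2$ are two distinct such functions, then $f_1-f_2$ is continuous, vanishes on $\partial D$, and is not identically zero, so it cannot have gradient zero almost everywhere on $D$; hence $\nabla f_1\neq \nabla f_2$ on a set of positive Lebesgue measure, and pointwise strict convexity of $\sigma$ on that set, combined with convexity elsewhere, produces the strict inequality $J\bigl(\tfrac12(f_1+f_2)\bigr)<\tfrac12 J(f_1)+\tfrac12 J(f_2)$. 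Consequently $J$ has at most one minimiser on this convex set, and since $\{I=0\}$ is exactly the set of such minimisers shifted by the additive constant $-P(D,b)$, we conclude $\{I=0\}=\{f^*\}$ for a unique $f^*$; the conclusion $\gamma_n^*(A)\to 1$ for every open neighbourhood $A\ni f^*$ is now immediate from the first assertion.

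The main obstacle here is really only the strict-convexity step: all the rest is standard large deviations machinery, whereas the strict convexity of $\sigma$—which is precisely what upgrades the variational principle to concentration on a unique limit shape—is the substantial analytic input proved in Section~\ref{sec:strict}. In this proof it is used as a black box.
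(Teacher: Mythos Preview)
Your argument is correct and supplies exactly the standard large-deviations details that the paper omits: the paper gives no proof of Theorem~\ref{thm:vp_new} at all, stating it simply as a direct corollary of Theorem~\ref{thm:ldp_new}. One minor remark: the lower semicontinuity of $f\mapsto\int_D\sigma(\nabla f)\,d\lambda$ under uniform convergence is more cleanly obtained from convexity of $\sigma$ (weak lower semicontinuity of convex integral functionals with respect to weak-$*$ convergence of gradients) than from Fatou, but your sketch is adequate and the conclusion is unaffected.
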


\section{Strict convexity of the surface tension}
\label{sec:strict}

In~\cite{COHN}, the authors find
an explicit formula for $\sigma$
for the case $d=2$ by appealing to the integrable nature of the
model.
A direct corollary is that the surface tension is strictly convex.
In~\cite{SHEFFIELD}, Sheffield proves that the
surface tension related to \emph{any} simply attractive
model is strictly convex.
In particular, this implies the following theorem.

\begin{theorem}
  \label{thm:strictly_convex}
 For any $d\geq 2$, the surface tension
  is strictly convex on the interior of $\mathcal S$.
\end{theorem}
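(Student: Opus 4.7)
I would proceed by contradiction, leveraging the explicit combinatorial structure of the cluster boundary swap (Theorem~\ref{thm:lekkerpropo}) to replace the cluster-swap / Edwards-Sokal argument of~\cite{SHEFFIELD}. Assume $\sigma$ is not strictly convex on the interior of $\mathcal S$. Since $\sigma$ is continuous and convex (the latter by a standard supermultiplicative estimate for $|\Omega(\Pi_n,\lfloor s\rfloor)|$), there exist distinct interior slopes $s_1\ne s_2$ with $\sigma$ affine on $[s_1,s_2]$; write $\bar s:=(s_1+s_2)/2$, so that $\sigma(s_1)+\sigma(s_2)=2\sigma(t)$ for every $t\in[s_1,s_2]$.

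Work in the periodic setting. Choose $s_{i,n}\in\mathcal S_n$ with $s_{i,n}\to s_i$, and set $Z_{i,n}:=|\Omega(L_n,s_{i,n})|$, so that $-|\Pi_n|^{-1}\log Z_{i,n}\to\sigma(s_i)$ (by the standard identification of periodic and fixed-boundary surface tensions). Put $A:=\Omega(L_n,s_{1,n})\times\Omega(L_n,s_{2,n})$, of cardinality $Z_{1,n}Z_{2,n}$, and let $B$ be the set of pairs whose marginal slopes lie in a small neighbourhood of $[s_1,s_2]$ and sum to $s_{1,n}+s_{2,n}$. Under the affine hypothesis, every slice of $B$ is exponentially comparable to $A$, so $|B|\le\operatorname{poly}(n)\cdot|A|$.

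Decompose $B$ into $\sim$-equivalence classes. Theorem~\ref{thm:lekkerpropo} gives $|C|=2^{N_C}$, where $N_C$ is the number of $g$-boundaries (with $g:=f_1-f_2$) in any representative of $C$; the invariance of $f_1+f_2$ (remark after Definition~\ref{def:CBS_Official_Def}) keeps orbits inside $B$. The contradiction now rests on two quantitative claims. \emph{First}, a typical $(f_1,f_2)\in A$ lies in a class with $N_C\ge\kappa|\Pi_n|$ for some $\kappa=\kappa(|s_1-s_2|)>0$: indeed, $\nabla g$ has mean $s_1-s_2\ne 0$ and so disagrees on an extensive fraction of edges by a law of large numbers made quantitative through monotonicity (Theorem~\ref{stabmon}) and concentration (Theorem~\ref{theorem:Azuma}). \emph{Second}, conditional on the class, the orientation $\alpha\in\{+,-\}^{N_C}$ is uniform (Theorem~\ref{thm:lekkerpropo}); viewing each flip as an independent bounded slope-increment via Lemma~\ref{lem:construction_of_the_CBS}, a Bernstein-type large-deviation estimate shows that the induced pair $(s(f_1^\alpha),s(f_2^\alpha))$ concentrates around $(\bar s,\bar s)$, so $\mathbb P(\alpha\text{ yields }(s_{1,n},s_{2,n}))\le e^{-cN_C}$ for some $c=c(|s_1-s_2|)>0$ since $(s_1,s_2)\ne(\bar s,\bar s)$.

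Combining the two estimates gives $|C\cap A|\le |C|\cdot e^{-c\kappa|\Pi_n|}$ for typical $C$, whence $|A|\le e^{-c\kappa|\Pi_n|}|B|\le \operatorname{poly}(n)\cdot e^{-c\kappa|\Pi_n|}|A|$, a contradiction for large $n$. The main obstacle is the second claim: on the torus, the slope contribution of flipping a single $g$-boundary is a bounded linear functional of the adjacent level sets, but can in principle be macroscopic for non-contractible boundaries. The decisive advantage of the cluster boundary swap over the general cluster swap of~\cite{SHEFFIELD} is that this slope-functional is an explicit gradient-level modification coming out of Lemma~\ref{lem:construction_of_the_CBS}; one may therefore discard a sub-exponentially small subset of $A$ supporting macroscopic boundaries and reduce the large-deviation bound to a Bernstein inequality for bounded independent increments, bypassing the auxiliary Ising model and the Edwards-Sokal representation that drive the argument in~\cite{SHEFFIELD}.
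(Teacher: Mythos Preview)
Your approach is genuinely different from the paper's, and the counting idea is appealing, but it has a real gap in Claim~2 that the final paragraph does not resolve.

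On the torus $X^d/L_n$, a cluster boundary swap changes the slope pair $(s(f_1),s(f_2))$ only when the swapped $g$-boundary is \emph{non-contractible}: if a $g$-boundary $C$ bounds a finite region of the torus, then $(f_1,f_2)\ominus C$ differs from $(f_1,f_2)$ only on that region, and both periodic slopes are unchanged. Hence the random reorientation of the contractible boundaries---which may well be the extensive majority of the $N_C$ boundaries---contributes nothing to the concentration of $(s(f_1^\alpha),s(f_2^\alpha))$ around $(\bar s,\bar s)$. The slope is governed entirely by the orientations of the non-contractible boundaries, and there are at most $O(n)$ of those: each one has at least $cn^{d-1}$ edges (it must wrap around the torus), while $|V_g|\le |E^d/L_n|=O(n^d)$. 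Your Bernstein bound therefore yields at best $\mathbb P(\alpha\text{ gives }(s_{1,n},s_{2,n}))\le e^{-cn}$, not $e^{-c\kappa|\Pi_n|}$. Since the affine hypothesis only gives $|B|\le e^{o(n^d)}|A|$ (the $o(n^d)$ coming from the convergence rate in the definition of $\sigma$, not a genuine $\operatorname{poly}(n)$), the factor $e^{-cn}$ is swallowed and no contradiction follows for $d\ge 2$.

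The proposed fix---discarding configurations with ``macroscopic boundaries''---is exactly backwards: the macroscopic boundaries are the non-contractible ones, and they are precisely the ones you need to keep in order to move the slope at all. If you discard them, every reorientation leaves the slope at $(s_{1,n},s_{2,n})$ and the argument collapses.

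The paper sidesteps this obstruction by abandoning direct counting. It works with the infinite-volume product Gibbs measure $\mu=\mu_1\times\mu_2$, reorients every $g$-boundary independently to obtain a shift-invariant measure $\hat\mu$ with the same specific entropy, shows that both marginals of $\hat\mu$ concentrate at slope $s_a$, and then proves $\hat\mu$ is \emph{not} Gibbs via a Burton--Keane trifurcation argument (the infinite $g$-level sets, which exist because $s_1\ne s_2$, can be merged inside a box after resampling). The strict inequality $\sigma(s_1)+\sigma(s_2)=\mathcal H^2(\hat\mu)>2\sigma(s_a)$ then comes from the variational characterisation of Gibbs measures (Theorem~\ref{thm:H_sigma_ineq_product}), which supplies exactly the ``strict'' in strict convexity that your quantitative counting cannot reach.
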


The proof of Sheffield relies crucially on cluster swapping.
The purpose of the section is to give an
alternative proof of Theorem~\ref{thm:strictly_convex},
which is simpler than the proof in~\cite{SHEFFIELD}
due to the special nature of the cluster swap in the particular setting of this article.

\subsection{The specific entropy}
\label{subsec:specEntropy}
First, we give an alternative characterisation
of $\sigma(s)$ in terms of the shift-invariant gradient
Gibbs measure of slope $s$ whose existence is guaranteed
by Theorem~\ref{thm:existence_big_Gibbs_measures}.
For this, we require the notions
of \emph{entropy} and \emph{specific entropy}.
Let $(X,\mathcal X)$ be an arbitrary measurable space
endowed with
a probability measure $\mu$ and a nonzero finite measure $\nu$.
Then the \emph{relative entropy of $\mu$ with respect to $\nu$},
denoted $\mathcal H(\mu,\nu)$, is defined by
\[\mathcal H(\mu,\nu):=\begin{cases}
  \nu(h\log h)=\mu(\log h)&\text{if $\mu\ll\nu$ and $h=d\mu/d\nu$,}\\
  \infty&\text{if $\mu\not\ll\nu$.}
\end{cases}\]
If $\mathcal A$ is a sub-$\sigma$-algebra of $\mathcal X$,
then write $\mathcal H_\mathcal A(\mu,\nu)$
for $\mathcal H(\mu|_\mathcal A,\nu|_\mathcal A)$.
It is well-known that $\mu$ minimises $\mathcal H(\cdot,\nu)$
over all probability measures
if and only if $\mu$ is the normalised version of $\nu$,
in which case $\mathcal H(\mu,\nu)=-\log \nu(X)$.
Also, if $\nu$ is a counting measure,
then $h\leq 1$, and in that case $\mathcal H(\mu,\nu)\leq 0$.

If $R$ is a finite subset of $X^d$,
then write $D_R:\Omega\to\mathbb Z^{R\times R}$
for the map satisfying
\[(D_Rf)(\mathbf x,\mathbf y)=f(\mathbf y)-f(\mathbf x)\]
for every $f\in \Omega$, $\mathbf x,\mathbf y\in R$.
Call $D_R$ the \emph{differences map}.
Note that $\operatorname{Im}D_R$ is finite, and that
$\mathcal F_R^\nabla=\sigma(D_R)$.
Stronger: $D_R$ may be seen as a bijection
from $\mathcal F_R^\nabla$ to the powerset of $\operatorname{Im}D_R$.
Write $\lambda^R$ for the pullback of the counting measure on $\operatorname{Im}D_R$
along the map $D_R$---$\lambda^R$ is a measure on $(\Omega,\mathcal F_R^\nabla)$
of size $\lambda^R(\Omega)=|\operatorname{Im}D_R|\in\mathbb Z_{>0}$.

\begin{remark}
  If $R$ is connected and $f\in\Omega$, then
  the values of $D_Rf$ can be recovered from the values of $\nabla f$
  on the edges of $E^d$ which are contained in $R$, by integrating $\nabla f$ along the appropriate paths
  through $R$.
  Thus, for connected sets $R\subset X^d$, one should think of
  the map $D_R$ as an alternative for the map
  \[f\mapsto (\nabla f)|_{E^d\cap(R\times R)}\]
  in the above construction.
  This also implies that $|\operatorname{Im}D_R|\leq 2^{|R|}$ whenever $R$ is connected.
\end{remark}

Let $\mu\in\mathcal P(\Omega,\mathcal F^\nabla)$ and $R\subset X^d$ finite.
Then the \emph{entropy of $\mu$ in $R$}, denoted $\mathcal H_R(\mu)$,
is defined by
\[\mathcal H_R(\mu)
:=\mathcal H_{\mathcal F_R^\nabla}(\mu,\lambda^R)
=\sum_{x\in\operatorname{Im}D_R}\mu(D_Rf=x)\log\mu(D_Rf=x)\in [-\log |\operatorname{Im}D_R|,0].\]
The \emph{specific entropy} of $\mu$, denoted $\mathcal H(\mu)$,
is defined to be the limit
\[\mathcal H(\mu):=\lim_{n\to\infty}\frac1{|\Pi_n|}\mathcal H_{\Pi_n}(\mu)=\lim_{n\to\infty}\frac1{|\Pi_n|}\mathcal H_{\mathcal F_{\Pi_n}^\nabla}(\mu,\lambda^{\Pi_n})\]
whenever the sequence is convergent.
Otherwise simply replace the limit by the limit inferior
to obtain a well-defined limit.
It can in fact be shown that the sequence is always convergent, see for example~\cite[Chapter~2]{SHEFFIELD},
but we shall not rely on this fact.

\begin{theorem}
  \label{thm_specific_entropy_single_gradient_measure}
  Let $\mu$ denote a measure of Theorem~\ref{thm:existence_big_Gibbs_measures} of slope $s\in\mathcal S$.
   Then $\mathcal H(\mu)=\sigma(s)$.
\end{theorem}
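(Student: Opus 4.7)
The plan is to prove the two inequalities $\mathcal{H}(\mu)\leq\sigma(s)$ and $\mathcal{H}(\mu)\geq\sigma(s)$ separately. Both directions exploit the pointwise concentration from Proposition~\ref{propo:limitFlatness}, which yields $(f-f(\mathbf{0}))|_{\Pi_n}\to s|_{\Pi_n}$ in sup norm at rate $o(n)$ almost surely, together with the stability statement in Theorem~\ref{thm:conv_to_sigma_stable}. The upper bound additionally uses the DLR equation for the gradient specification.

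For the upper bound, I would apply $\mu=\mu\gamma_{\Pi_n}^\nabla$ and note that, conditional on $\mathcal{F}^\nabla_{\Pi_n^\complement}$, the distribution of $D_{\Pi_n}f$ is uniform on the image of $\Omega(\Pi_n,f)$ under $D_{\Pi_n}$. The map $D_{\Pi_n}$ is injective on $\Omega(\Pi_n,f)$ because distinct extensions of a fixed exterior cannot differ by a constant, so this conditional law is uniform on $|\Omega(\Pi_n,f)|$ atoms. Since conditioning increases the $\sum p\log p$ functional, this gives $\mathcal{H}_{\Pi_n}(\mu)\leq -\mathbb{E}_\mu\log|\Omega(\Pi_n,f)|$. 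Bounded convergence, justified by the uniform bound $\log|\Omega(\Pi_n,f)|/|\Pi_n|=O(1)$ (each edge-gradient takes only two values), combined with Proposition~\ref{propo:limitFlatness} and Theorem~\ref{thm:conv_to_sigma_stable}, yields $\tfrac{1}{|\Pi_n|}\mathbb{E}_\mu\log|\Omega(\Pi_n,f)|\to -\sigma(s)$; dividing by $|\Pi_n|$ and passing to the limit gives $\mathcal{H}(\mu)\leq\sigma(s)$.

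For the lower bound, I would first invoke Proposition~\ref{propo:limitFlatness} together with Egorov's theorem to extract a sequence $\epsilon_n\to 0$ with $\mu(A_n)\to 1$, where $A_n:=\{f:\|(f-f(\mathbf{0}))|_{\Pi_n}-s|_{\Pi_n}\|_\infty\leq\epsilon_n n\}$. The image $D_{\Pi_n}(A_n)$ is contained in $\bigcup_b D_{\Pi_n}(\Omega(\Pi_n,b))$ over boundary profiles $b$ within $\epsilon_n n$ of $s|_{\partial\Pi_n}$ (up to an additive constant, absorbed by the shift-invariance of $|\Omega(\Pi_n,\cdot)|$). Since $|\partial\Pi_n|=O(n^{d-1})$ and each boundary value has $O(n)$ possibilities, there are $\exp(o(|\Pi_n|))$ such profiles $b$ (crucially using $d\geq 2$), and Theorem~\ref{thm:conv_to_sigma_stable} applied to a maximising sequence gives $|\Omega(\Pi_n,b)|=\exp(-|\Pi_n|\sigma(s)+o(|\Pi_n|))$. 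Hence $|D_{\Pi_n}(A_n)|\leq\exp(-|\Pi_n|\sigma(s)+o(|\Pi_n|))$. The elementary estimate
\[
  -\mathcal{H}_{\Pi_n}(\mu)\leq \log 2+\mu(A_n)\log|D_{\Pi_n}(A_n)|+\mu(A_n^\complement)\log|\operatorname{Im}D_{\Pi_n}|,
\]
with $\log|\operatorname{Im}D_{\Pi_n}|=O(|\Pi_n|)$ and $\mu(A_n^\complement)=o(1)$, then delivers $\mathcal{H}(\mu)\geq\sigma(s)$.

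The more delicate direction is the lower bound, and the main obstacle there is the uniformity: one must count boundary data finely enough that their number is subexponential in $|\Pi_n|$ (the Lipschitz cap on height function values and the hypothesis $d\geq 2$ are both essential at this point), and one must know that the $o(|\Pi_n|)$ correction in Theorem~\ref{thm:conv_to_sigma_stable} is uniform across the relevant family of $b$'s, which is the reason for applying the theorem to a maximising sequence rather than individually.
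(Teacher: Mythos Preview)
Your argument is correct, but the paper takes a shorter route that avoids splitting into two inequalities. The paper uses the Gibbs property to factor the Radon--Nikodym density exactly: for $S\cup\partial S\subset R$ one has $h^R=|\Omega(S,f)|^{-1}\,h^{R\smallsetminus S}$, and taking $R=\Pi_n$, $S=\Pi_{n-1}$ yields the identity
\[
  \mathcal H_{\Pi_n}(\mu)=\mathcal H_{\Pi_n\smallsetminus\Pi_{n-1}}(\mu)-\mu\bigl(\log|\Omega(\Pi_{n-1},f)|\bigr).
\]
The annulus term is $O(n^{d-1})$ since $\Pi_n\smallsetminus\Pi_{n-1}$ is connected, and the main term converges to $\sigma(s)$ by exactly the ingredients you invoke (Proposition~\ref{propo:limitFlatness}, Theorem~\ref{thm:conv_to_sigma_stable}, dominated convergence). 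So both bounds fall out of a single equality.

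Your upper bound is essentially this same computation, except you condition on the full exterior $\mathcal F^\nabla_{\Pi_n^\complement}$ rather than on the annulus $\mathcal F^\nabla_{\Pi_n\smallsetminus\Pi_{n-1}}$, and you discard the resulting ``boundary'' contribution via the inequality $H(X)\leq H(X\mid Y)$ rather than keeping it as an explicit $O(n^{d-1})$ term. Your lower bound, on the other hand, is a genuinely different argument: Egorov plus a counting of boundary profiles. It is correct (and your remark about applying Theorem~\ref{thm:conv_to_sigma_stable} to a maximising sequence to secure uniformity is exactly the right fix), but it is longer. What your route buys is robustness: the lower bound uses only the concentration~\eqref{eq:flat_conc}, not the Gibbs property, which is precisely why a version of it resurfaces in the proof of Theorem~\ref{thm:no_gibbs_no_sigma}. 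What the paper's route buys is brevity and a direct proof that the limit defining $\mathcal H(\mu)$ actually exists.
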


\begin{proof}
  Let $\mu$ denote any gradient Gibbs measure for now.
  Write
  $h^R$ for the Radon-Nikodym derivative
  \[h^R := \frac{d\mu|_{\mathcal F_R^\nabla}}{d\lambda^R}\]
   for any finite $R\subset X^d$.
  Fix $R,S\subset X^d$ finite with $S\cup\partial S\subset R$.
  As $\mu$ is Gibbs, we know that $\mu$ is uniformly random in $\Omega(S,f)$
  whenever $\mu$ is conditioned on the values of $f$ on $S^\complement$.
  This implies immediately that
  \[h^R=\frac1{|\Omega(S,f)|}h^{R\smallsetminus S}.\]
  The function $|\Omega(S,\cdot )|$ is $\mathcal F_{\partial S}^\nabla$-measurable
  as the model is Markov,
  and consequently $h^R$ is $\mathcal F_{R\smallsetminus S}^\nabla$-measurable.

  Let now $\mu$ be a measure of Theorem~\ref{thm:existence_big_Gibbs_measures} of slope $s\in\mathcal S$,
  and pick $n\in\mathbb N$.
  Then
  \begin{align*}
    (2n)^{-d}\mathcal H_{\Pi_n}(\mu)
    &
    =(2n)^{-d}\mu(\log h^{\Pi_n})
    =(2n)^{-d}\mu(\log h^{\Pi_n\smallsetminus \Pi_{n-1}}-\log |\Omega(\Pi_{n-1},f)|)
    \\&=
    \numberthis
    \label{eq_this_is_the_limit}(2n)^{-d}\mathcal H_{\Pi_n\smallsetminus\Pi_{n-1}}(\mu)+\mu(-(2n)^{-d}\log|\Omega(\Pi_{n-1},f)|).
  \end{align*}
The first term in~\eqref{eq_this_is_the_limit} vanishes as $n\to\infty$
because $\Pi_n\smallsetminus\Pi_{n-1}$ is connected as a subset of $(X^d,E^d)$:
\[
  |\mathcal H_{\Pi_n\smallsetminus\Pi_{n-1}}(\mu)|\leq \log |\operatorname{Im}D_{\Pi_n\smallsetminus\Pi_{n-1}}|\leq |{\Pi_n\smallsetminus\Pi_{n-1}}|\log 2=O(n^{d-1}).\]
The term within the expectation in (\ref{eq_this_is_the_limit}) converges to $\sigma(s)$ pointwise
by Proposition~\ref{propo:limitFlatness} and
Theorem~\ref{thm:conv_to_sigma_stable}.
We may apply the dominated convergence theorem
because the expression within the expectation is always absolutely bounded
by $\log 2$, since
$0\leq \log |\Omega(\Pi_{n-1},f)|\leq |\Pi_{n-1}|\log 2 \leq (2n)^d\log 2$.
\end{proof}

Before proceeding, let us quote an important result from the literature.

\begin{theorem}
 \label{thm:no_gibbs_no_sigma}
  Let $\mu\in\mathcal P_\Theta(\Omega,\mathcal F^\nabla)$
  denote a measure which satisfies the concentration of~\eqref{eq:flat_conc}
  for some $s\in\mathcal S$,
  but which is not a Gibbs measure.
  Then $\mathcal H(\mu)>\sigma(s)$.
\end{theorem}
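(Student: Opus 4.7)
The plan is to establish the stronger variational statement that $\mathcal H(\mu)\geq \sigma(s)$ for every shift-invariant gradient measure $\mu$ satisfying \eqref{eq:flat_conc}, with equality if and only if $\mu$ is a gradient Gibbs measure. The lower bound comes from a boundary-conditioning argument, while the strict inequality for non-Gibbs $\mu$ is obtained by a shift-invariant uniformisation argument: I exploit the fact that failing the DLR equation on one finite region produces a \emph{local} entropy gain, which can then be replicated at a positive density of disjoint translates.

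\textbf{Step 1 (Lower bound).} Fix $n$. Conditioning on the boundary differences $D_{\partial\Pi_n}f$, the measure $\mu$ is supported on $\Omega(\Pi_n,f)$, so its conditional Shannon entropy is at most $\log|\Omega(\Pi_n,f)|$. Splitting $\mathcal H_{\Pi_n}(\mu)$ along the chain rule gives
\[
  \mathcal H_{\Pi_n}(\mu)\geq \mathcal H_{\partial\Pi_n}(\mu)-\mathbb E_\mu\log|\Omega(\Pi_n,f)|.
\]
The first term is $O(n^{d-1})$ since $|\partial\Pi_n|=O(n^{d-1})$. The concentration assumption \eqref{eq:flat_conc} together with Theorem~\ref{thm:conv_to_sigma_stable} yields $\mathbb E_\mu\log|\Omega(\Pi_n,f)|=-\sigma(s)|\Pi_n|+o(n^d)$, with the large-deviation tail estimate bounded trivially by $|\Pi_n|\log 2$ times a vanishing probability. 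Dividing by $|\Pi_n|$ and letting $n\to\infty$ gives $\mathcal H(\mu)\geq\sigma(s)$.

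\textbf{Step 2 (Strict inequality for non-Gibbs $\mu$).} Since $\mu$ is not a gradient Gibbs measure, there is a finite $S\subset X^d$ with $\nu:=\mu\gamma_S^\nabla\neq\mu$. The measures $\mu$ and $\nu$ agree on $\mathcal F_{S^\complement}^\nabla$, while $\nu(\,\cdot\mid\mathcal F_{\partial S}^\nabla)$ is uniform on the fibres $\Omega(S,f)$ and $\mu(\,\cdot\mid\mathcal F_{\partial S}^\nabla)$ is not. A chain-rule computation then shows, for every finite $R\supset S\cup\partial S$,
\[
  \mathcal H_R(\mu)-\mathcal H_R(\nu)=\delta:=\mathbb E_\mu\bigl[\mathcal H\bigl(\mu(\,\cdot\mid\mathcal F_{\partial S}^\nabla),\,\nu(\,\cdot\mid\mathcal F_{\partial S}^\nabla)\bigr)\bigr]>0,
\]
where the strict positivity expresses the basic fact that relative entropy of any distribution with respect to the uniform distribution on the same finite set is strictly positive unless they coincide. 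Crucially $\delta$ is intrinsic to the local conditional structure of $\mu$ and is independent of $R$.

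\textbf{Step 3 (Replication and conclusion).} By shift-invariance of $\mu$, the same $\delta$ arises at every translate $S+\mathbf x$. Fix a full-rank sublattice $L'\subset X^d$ coarse enough that the closed neighbourhoods $(S\cup\partial S)+\mathbf x$ for distinct $\mathbf x\in L'$ are pairwise disjoint, and set $\mathcal M_n:=\{\mathbf x\in L':(S\cup\partial S)+\mathbf x\subset\Pi_n\}$; then $|\mathcal M_n|\geq c_S|\Pi_n|$ for some $c_S>0$. The disjointness ensures that resampling at one translate leaves the conditional law $\mu(f|_{S+\mathbf y}\in\cdot\mid\mathcal F_{\partial(S+\mathbf y)}^\nabla)$ unchanged at every other $\mathbf y\in\mathcal M_n$, so iterating the identity of Step~2 along $\mathcal M_n$ is valid and yields, with $\nu^{(n)}:=\mu\prod_{\mathbf x\in\mathcal M_n}\gamma_{S+\mathbf x}^\nabla$,
\[
  \mathcal H_{\Pi_n}(\mu)=\mathcal H_{\Pi_n}(\nu^{(n)})+|\mathcal M_n|\delta\geq\mathcal H_{\Pi_n}(\nu^{(n)})+c_S\delta|\Pi_n|.
\]
The auxiliary measure $\nu^{(n)}$ agrees with $\mu$ on $\partial\Pi_n$ and differs from it only by bounded local Lipschitz modifications, so Step~1 (which uses only concentration near $\partial\Pi_n$, not shift-invariance) applies to $\nu^{(n)}$ and gives $\mathcal H_{\Pi_n}(\nu^{(n)})\geq\sigma(s)|\Pi_n|-o(n^d)$. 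Dividing by $|\Pi_n|$ yields $\mathcal H(\mu)\geq\sigma(s)+c_S\delta>\sigma(s)$. The main obstacle is verifying that the entropy gain accumulates additively under the sequential resamplings; this reduces to confirming that the conditional law of $f$ on any one translate, given the data on its $\partial S$-neighbourhood, is not affected by resampling operations whose supports are geometrically separated from it — a property that follows directly from the disjointness built into the choice of $L'$.
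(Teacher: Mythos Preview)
Your overall strategy is precisely the superadditivity argument that the paper's proof overview alludes to (and then defers to Georgii and Sheffield for), so the route is correct. There is, however, a genuine gap in Step~2 that propagates into Step~3.

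You assert that $\mathcal H_R(\mu)-\mathcal H_R(\nu)=\delta$ with $\delta$ given by the expected relative entropy of $\mu(\,\cdot\mid\mathcal F_{\partial S}^\nabla)$ against the uniform kernel, and that this $\delta$ is independent of $R$ and strictly positive. Neither claim is justified. The chain-rule computation actually gives
\[
\mathcal H_R(\mu)-\mathcal H_R(\mu\gamma_S^\nabla)
=\mathbb E_\mu\bigl[\log|\Omega(S,f)|\bigr]-H_\mu\bigl(f|_S\,\bigm|\,f|_{R\setminus S}\bigr),
\]
which is nondecreasing in $R$ and in general genuinely depends on $R$, since $\mu$ is not assumed Markov. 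Worse, the specific quantity you write down---conditioning only on $\partial S$---can vanish even when $\mu\gamma_S^\nabla\neq\mu$: the hypothesis says only that $\mu(f|_S\in\cdot\mid\mathcal F_{S^\complement}^\nabla)$ fails to be uniform, and $f|_S$ could perfectly well be uniform given $f|_{\partial S}$ while still being correlated with some far-away region of $S^\complement$.

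The repair is straightforward. By martingale convergence the displayed difference tends, as $R\uparrow X^d$, to $\mathbb E_\mu[\log|\Omega(S,f)|]-H_\mu(f|_S\mid f|_{S^\complement})>0$; so choose a finite $R_0\supset S\cup\partial S$ with $\delta_{R_0}>0$. In Step~3 take $L'$ coarse enough that the translates of $R_0$ (not just of $S\cup\partial S$) are pairwise disjoint. Then at the $k$-th resampling the relevant conditioning set $\Pi_n\setminus(S_1\cup\dots\cup S_k)$ still contains a full translate of $R_0\setminus S$ around $S_k$, and one obtains the inequality
\[
\mathcal H_{\Pi_n}(\mu)\;\geq\;\mathcal H_{\Pi_n}(\nu^{(n)})+|\mathcal M_n|\,\delta_{R_0}
\]
rather than an equality. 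With this modification (and with $\mathcal M_n$ chosen so that the translates of $R_0$ avoid the boundary layer $\Pi_n\setminus\Pi_{n-1}$, so that Step~1 applies verbatim to $\nu^{(n)}$) the argument is complete.
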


\begin{proof}[Proof overview]
  Write $\mu^{n,g}$ for the measure $\mu$
  conditioned on $f(\x)=g(\x)$
  for all $\x\in\Pi_n\smallsetminus\Pi_{n-1}$.
  Then
  \[
    \mathcal H_{\Pi_n}(\mu)=\mathcal H_{\Pi_n\smallsetminus\Pi_{n-1}}(\mu)
    +\int \mathcal H_{\Pi_n}(\mu^{n,g}) d\mu(g).
  \]
  This is the same decomposition as in the proof of the previous theorem.
  For fixed $g$, the integrand in this display is clearly minimised
  if $\mu$ is a Gibbs measure,
  because $\mu^{n,g}$ is then uniformly random in all extensions
  of $g|_{\Pi_n\smallsetminus\Pi_{n-1}}$ to $\Pi_n$.
  This proves that $\mathcal H(\mu)\geq\sigma(s)$
    whenever $\mu$ is concentrated as in~\eqref{eq:flat_conc};
  the difficulty is in proving the strict inequality whenever
  $\mu$ is not Gibbs.
  If $\mu$ is not Gibbs, then the integral in the display
  will for some $n$ be strictly larger than if $\mu$
  were Gibbs, but it is nontrivial to demonstrate that this difference
  survives the normalization by $|\Pi_n|$
  in the definition of $\mathcal H(\mu)$.
  This follows from a standard superadditivity argument,
  see Lemma~4.1 in~\cite{SELF_VP}
  or Theorem~15.37 in~\cite{GEORGII}.
  See Theorem~2.5.2 in~\cite{SHEFFIELD}
  for a proof of the current theorem in full detail.
\end{proof}

\subsection{The product setting}

For the double dimer model, the cluster swap, and the level set decomposition
developed in this article, it is essential to work in the product setting.
We shall introduce some straightforward technical machinery before proceeding;
essentially we must adapt the constructions and results from Section~\ref{sec:gibbs}
and from the previous subsection to
the product setting.
Write
\begin{alignat*}{5}
  &\mathcal F^{2\nabla}&&:=\mathcal F^{\nabla}\times\mathcal F^{\nabla},   \qquad   && \gamma_R^2&&:=\gamma_R\times\gamma_R
  ,\qquad &&\lambda^R_2:=\lambda^R\times\lambda^R, \\
  &\mathcal F^{2\nabla}_R&&:=\mathcal F^{\nabla}_R\times\mathcal F^{\nabla}_R,  \qquad && \gamma_R^{2\nabla}&&:=\gamma_R^{\nabla}\times\gamma_R^{\nabla}.
\end{alignat*}
Let $\mathcal P(\Omega^2,\mathcal F^{2\nabla})$ denote the collection of probability measures on $(\Omega^2,\mathcal F^{2\nabla})$;
such measures are called \emph{double gradient measures}.
If $\mu\in\mathcal P(\Omega^2,\mathcal F^{2\nabla})$ then we shall by default write
 $(f_1,f_2)$ for the pair of random height functions, and $g:=f_1-f_2$ for the random
 difference.
 Write $\mathcal P_\Theta(\Omega^2,\mathcal F^{2\nabla})$
 for the collection of shift-invariant measures $\mu\in\mathcal P(\Omega^2,\mathcal F^{2\nabla})$;
 the measure $\mu$
 is called \emph{shift-invariant} if $\mu(\tilde\theta A\times \tilde\theta B)=\mu(A\times B)$
 for every $\theta\in\Theta$ and $A,B\in\mathcal F^\nabla$.
 This is equivalent to requiring that $(\nabla f_1,\nabla f_2)$ and $(\theta \nabla f_1,\theta\nabla f_2)$
 have the same law in $\mu$ for each shift $\theta\in\Theta$.

The kernel $\gamma^2_R=\gamma_R\times\gamma_R$ is simply the kernel
from $(\Omega^2,\mathcal F_{R^\complement}^2)$
to
$(\Omega^2,\mathcal F^2)$
with the property that the probability measure $(\gamma_R\times\gamma_R)(\cdot,(f_1,f_2))$
is uniformly random in the set $\Omega(R,f_1)\times\Omega(R,f_2)$,
and it restricts naturally to the kernel $\gamma^{2\nabla}_R=\gamma_R^\nabla\times\gamma_R^\nabla$---this is a probability kernel from $(\Omega^2,\mathcal F^{2\nabla}_{R^\complement})$
to $(\Omega^2,\mathcal F^{2\nabla})$.
A double gradient measure $\mu$ is called a \emph{double gradient Gibbs measure}
if it satisfies, for every finite $R\subset X^d$, the DLR equation
\[\mu=\mu\gamma_R^{2\nabla}.\]
If $\mu$ is the product of two gradient Gibbs measures $\mu_1$ and $\mu_2$,
then $\mu$ is also Gibbs as
\[
\mu
=\mu_1\times\mu_2=(\mu_1\gamma_R^\nabla)\times(\mu_2\gamma_R^\nabla)
=(\mu_1\times\mu_2)(\gamma_R^\nabla\times\gamma_R^\nabla)
=\mu\gamma_R^{2\nabla}.
\]

Now let $\mu\in\mathcal P(\Omega^2,\mathcal F^{2\nabla})$
and $R\subset X^d$ finite.
The \emph{entropy of $\mu$ in $R$},
denoted $\mathcal H_R^2(\mu)$,
is defined by
\begin{align*}
  \mathcal H_R^2(\mu)
  &:=\mathcal H_{\mathcal F_R^{2\nabla}}(\mu,\lambda^R_2).
\end{align*}
The \emph{specific entropy} of $\mu$, denoted $\mathcal H^2(\mu)$,
is defined to be the limit
\[\mathcal H^2(\mu)
:=\lim_{n\to\infty}\frac1{|\Pi_n|}\mathcal H_{\Pi_n}^2(\mu)
=\lim_{n\to\infty}\frac1{|\Pi_n|}\mathcal H_{\mathcal F_{\Pi_n}^{2\nabla}}(\mu,\lambda^{\Pi_n}_2)\]
whenever the limit is convergent,
and the limit inferior otherwise.

The direct generalisation of Theorems~\ref{thm_specific_entropy_single_gradient_measure}
and~\ref{thm:no_gibbs_no_sigma}
to the product setting reads as follows.

\begin{theorem}
  \label{thm:H_sigma_ineq_product}
  Let $\mu\in\mathcal P_\Theta(\Omega^2,\mathcal F^{2\nabla})$
  denote a shift-invariant product measure
  such that
  \begin{equation}
    \label{double_concentration}
    \lim_{n\to\infty}\frac1n\|(f_i-f_i(\0))|_{\Pi_n}-s_i|_{\Pi_n}\|_\infty=0
  \end{equation}
  almost surely for $i\in\{1,2\}$
  and for some fixed slopes $s_1,s_2\in\mathcal S$.
  Then $\mathcal H^2(\mu)\geq \sigma(s_1)+\sigma(s_2)$,
  with equality if and only if $\mu$ is a Gibbs measure.
\end{theorem}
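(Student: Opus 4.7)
The plan is to mirror the proofs of Theorems~\ref{thm_specific_entropy_single_gradient_measure} and~\ref{thm:no_gibbs_no_sigma}, lifting each step from the single height function to the product setting. First I would handle the \emph{if} direction. Suppose $\mu$ is Gibbs in the product sense. Then conditioning on the values of $(f_1,f_2)$ on $\Pi_n\smallsetminus\Pi_{n-1}$, the measure $\mu$ is uniform on the finite product $\Omega(\Pi_{n-1},f_1)\times\Omega(\Pi_{n-1},f_2)$. Writing $h^{R,2}$ for the Radon–Nikodym derivative $d\mu|_{\mathcal F_R^{2\nabla}}/d\lambda_2^R$, one obtains exactly as in the one-function case the factorisation
\[
    h^{\Pi_n,2}=\frac{1}{|\Omega(\Pi_{n-1},f_1)|\cdot|\Omega(\Pi_{n-1},f_2)|}\,h^{\Pi_n\smallsetminus\Pi_{n-1},2},
\]
which yields the decomposition
\[
    \mathcal H^2_{\Pi_n}(\mu)=\mathcal H^2_{\Pi_n\smallsetminus\Pi_{n-1}}(\mu)-\mu\bigl(\log|\Omega(\Pi_{n-1},f_1)|\bigr)-\mu\bigl(\log|\Omega(\Pi_{n-1},f_2)|\bigr).
\]
The boundary entropy term is $O(n^{d-1})$ since $\Pi_n\smallsetminus\Pi_{n-1}$ is connected of that cardinality, so it vanishes after dividing by $|\Pi_n|=(2n)^d$. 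The concentration hypothesis~\eqref{double_concentration} for $f_1$ and $f_2$ together with Theorem~\ref{thm:conv_to_sigma_stable} and dominated convergence (the integrands are bounded by $\log 2$) send the remaining two terms to $\sigma(s_1)$ and $\sigma(s_2)$ respectively. This gives $\mathcal H^2(\mu)=\sigma(s_1)+\sigma(s_2)$.

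Next I would treat the lower bound $\mathcal H^2(\mu)\geq\sigma(s_1)+\sigma(s_2)$ for arbitrary $\mu$ satisfying~\eqref{double_concentration}, together with the characterisation of equality. The standard way is a chain-rule decomposition: writing $\mu^{n,(g_1,g_2)}$ for $\mu$ conditioned on the values of $(f_1,f_2)$ on $\Pi_n\smallsetminus\Pi_{n-1}$, we have
\[
    \mathcal H^2_{\Pi_n}(\mu)=\mathcal H^2_{\Pi_n\smallsetminus\Pi_{n-1}}(\mu)+\int\mathcal H^2_{\Pi_n}\bigl(\mu^{n,(g_1,g_2)}\bigr)\,d\mu(g_1,g_2).
\]
For fixed $(g_1,g_2)$ the inner entropy is minimised precisely when the conditional distribution is uniform on $\Omega(\Pi_{n-1},g_1)\times\Omega(\Pi_{n-1},g_2)$, i.e.\ when $\mu$ behaves Gibbs-like on $\Pi_{n-1}$; substituting and taking limits via Theorem~\ref{thm:conv_to_sigma_stable} recovers the bound $\mathcal H^2(\mu)\geq\sigma(s_1)+\sigma(s_2)$, with equality forced at every scale as soon as equality holds in the limit.

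The delicate step—and the main obstacle—is promoting a local failure of the Gibbs property into a strictly positive deficit that survives normalisation by $|\Pi_n|$. This is exactly the product analogue of Theorem~\ref{thm:no_gibbs_no_sigma}, and I would handle it by the same superadditivity/subadditivity argument used in the single-function case (see \cite[Lem.~4.1]{SELF_VP} or \cite[Thm.~15.37]{GEORGII} in the single-copy setting, and \cite[Thm.~2.5.2]{SHEFFIELD} for a template): if $\mu$ is not Gibbs then there is a finite $R$ on which $\mu\neq\mu\gamma_R^{2\nabla}$, and shift-invariance together with strict convexity of $x\mapsto x\log x$ produces a positive entropy gap at every translate of $R$; a tiling argument then converts many disjoint translate-deficits into a macroscopic gap, yielding the strict inequality. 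The arguments used in~\cite{SHEFFIELD} to pass from the finite to the specific scale transfer essentially verbatim because the product kernel $\gamma_R^{2\nabla}=\gamma_R^\nabla\times\gamma_R^\nabla$ inherits all the finite-energy properties of $\gamma_R^\nabla$, and the space $(\Omega^2,\mathcal F^{2\nabla})$ enjoys the same compactness under the topology of local convergence.

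Finally, to close the equivalence: the Gibbs direction gives equality, and the reverse direction is a contrapositive of the previous paragraph. Combining these establishes both the inequality $\mathcal H^2(\mu)\geq\sigma(s_1)+\sigma(s_2)$ and the claim that equality holds exactly when $\mu$ is a double gradient Gibbs measure.
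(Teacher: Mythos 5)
Your proposal is correct and follows essentially the same route as the paper, which in fact gives no explicit proof of Theorem~\ref{thm:H_sigma_ineq_product} but simply asserts it as the ``direct generalisation'' of Theorems~\ref{thm_specific_entropy_single_gradient_measure} and~\ref{thm:no_gibbs_no_sigma} to the product setting. Your spelled-out version---the annulus entropy decomposition, $O(n^{d-1})$ boundary term, dominated convergence to $\sigma(s_1)+\sigma(s_2)$, and the superadditivity argument for strictness when $\mu$ is not Gibbs---is at least as complete as the source and uses the same ingredients.
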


\subsection{Proof overview}
Fix throughout this section two distinct Lipschitz slopes $s_1,s_2\in\mathcal S$
such that their average $s_a:=(s_1+s_2)/2$
lies in the interior of $\mathcal S$.
The ultimate goal of this section is to prove that
$2\sigma(s_a)<\sigma(s_1)+\sigma(s_2)$,
which implies Theorem~\ref{thm:strictly_convex}: that $\sigma$
is strictly convex on the interior of $\mathcal S$.

In the remainder of this section,
let $\mu_i$ denote the shift-invariant gradient Gibbs measure of Theorem~\ref{thm:existence_big_Gibbs_measures}
of slope $s_i$ for each $i\in\{1,2\}$,
and fix $\mu:=\mu_1\times\mu_2$.
Then $\mu$ is a shift-invariant double gradient Gibbs measure.
Moreover, $\mu$ has the concentration of~\eqref{double_concentration},
and therefore
Theorem~\ref{thm:H_sigma_ineq_product}
implies that $\mathcal H^2(\mu)=\sigma(s_1)+\sigma(s_2)$.

The sets $T(f_1)$ and $T(f_2)$, the graph $G_g=(V_g,E_g)$, the $g$-level sets,
the $g$-boundaries, and the directed graph $(\operatorname{LSD}(g),\nabla g)$ are all invariant
under adding constants to $f_1$ and $f_2$,
as each of them is characterised entirely by the gradients $\nabla f_1$, $\nabla f_2$, and  $\nabla g:=\nabla f_1-\nabla f_2$.
The gradient $\nabla g$ also determines $X_g^\pm(E)$
for any $g$-boundary $E$.

\begin{lemma}
  \label{lemma_lsd_right_infinite_structure}
  It is $\mu$-almost certain that $\operatorname{LSD}(g)$ contains a subgraph that
  is graph isomorphic to $\mathbb Z$.
  Moreover, every $g$-level set and every $g$-boundary
  involved in such a subgraph of $\operatorname{LSD}(g)$ is unbounded.
\end{lemma}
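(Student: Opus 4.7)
The plan is to use the nonzero drift $s_1-s_2$ to produce a bi-infinite simple path in $\operatorname{LSD}(g)$, and then to exclude bounded level sets and boundaries from any such path via a cut-set argument. Fix $\mathbf v\in X^d$ with $(s_1-s_2)(\mathbf v)>0$, which exists because $s_1\ne s_2$. Applying Proposition~\ref{propo:limitFlatness} to the marginals $\mu_1$ and $\mu_2$ yields, $\mu$-almost surely, $g(n\mathbf v)/n\to(s_1-s_2)(\mathbf v)$, so $g(n\mathbf v)\to+\infty$ as $n\to+\infty$ and $g(n\mathbf v)\to-\infty$ as $n\to-\infty$. The sequence $(n\mathbf v)_{n\in\mathbb Z}$ induces a walk $(W_n)_{n\in\mathbb Z}$ in $\operatorname{LSD}(g)$ by letting $W_n$ be the level set containing $n\mathbf v$; consecutive $W_n$ and $W_{n+1}$ are either equal or tree-adjacent. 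Since $g$ is a graph homomorphism changing by $\pm(d+1)$ across each edge of $\operatorname{LSD}(g)$, every tree-ball of radius $r$ around $W_0$ has $g$-values in the bounded window $[g(W_0)-r(d+1),\,g(W_0)+r(d+1)]$, so the divergence of $g(W_n)$ forces $W_n$ to escape every finite tree-ball as $|n|\to\infty$.

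It suffices to show that $\operatorname{LSD}(g)$ has at least two ends, since any two distinct ends of a tree are joined by a bi-infinite simple path. Suppose for contradiction that $\operatorname{LSD}(g)$ has a single end $\epsilon$; then removing any vertex leaves exactly one infinite subtree and finite side branches, and there is a unique main ray from $W_0$ to $\epsilon$. Both the forward walk $(W_n)_{n\ge 0}$ and the time-reversed walk $(W_{-n})_{n\ge 0}$ escape to $\epsilon$; their excursions into finite side branches begin and end at the same main-ray vertex and so contribute zero net change to $g$ over their duration. Hence any asymptotic drift of $g(W_n)$ in $n$ must come from motion along the main ray. The forward drift $g(W_n)/n\to(s_1-s_2)(\mathbf v)>0$ therefore forces the $g$-values along the main ray to tend to $+\infty$, while the reversed walk forces those same $g$-values to tend to $-\infty$, a contradiction. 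Consequently $\operatorname{LSD}(g)$ has at least two ends, producing the required $\mathbb Z$-subgraph. The main delicacy here is that $\operatorname{LSD}(g)$ need not be locally finite, so König-type compactness arguments do not apply; the drift argument above sidesteps this by exploiting the linear growth of $g(W_n)$.

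For the unboundedness statement, let $L$ be a level set on some $\mathbb Z$-subgraph $P$. Then $P$ uses two distinct tree-edges incident to $L$, so the corresponding two components of $\operatorname{LSD}(g)\setminus\{L\}$ each contain an infinite ray and hence infinitely many tree-vertices. Since level sets are connected in $X^d$, a tree-path in $\operatorname{LSD}(g)$ avoiding $L$ corresponds to an $X^d$-walk in $X^d\setminus L$ and conversely, so the components of $\operatorname{LSD}(g)\setminus\{L\}$ are in bijection with the connected components of $X^d\setminus L$, and a component of finite $X^d$-support contributes only finitely many tree-vertices. If $L$ were bounded as a subset of $X^d$, then $X^d\setminus L$ would have at most one infinite connected component, yielding at most one component of $\operatorname{LSD}(g)\setminus\{L\}$ with infinitely many vertices; since $\operatorname{LSD}(g)$ is a tree, only one incident edge of $L$ can lead into that component, contradicting the need for two. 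The case of a $g$-boundary $E$ is analogous: by Lemma~\ref{twocomponents}, a bounded $E$ has a finite $X^d$-side, so one of the two subtrees at the cut in $\operatorname{LSD}(g)$ has only finitely many vertices and cannot host the corresponding half of $P$.
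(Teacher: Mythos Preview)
Your proof is correct and follows the same overall strategy as the paper: project the lattice line $(n\mathbf v)_{n\in\mathbb Z}$ to the tree $\operatorname{LSD}(g)$ and use the nonzero drift of $g$ to obtain a bi-infinite geodesic.

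Two minor imprecisions are worth flagging. First, for ``consecutive $W_n$ and $W_{n+1}$ are either equal or tree-adjacent'' to hold you need $n\mathbf v$ and $(n+1)\mathbf v$ to be neighbours in $(X^d,E^d)$, which forces $\mathbf v=\pm\mathbf g_i$ for some $i$; the paper makes exactly this choice. Second, in a non-locally-finite tree the side branches off the main ray need not be finite as sets; what you actually use (and what is true) is that the walk's \emph{excursions} into them are finite in time, because the walk converges to the end. Your argument goes through with this reading: at the hitting times $\sigma_j$ of the main-ray vertices $R_j$ one has $g(R_j)/\sigma_j\to(s_1-s_2)(\mathbf v)>0$, and along the backward walk $g(R_j)/\tau_j\to -(s_1-s_2)(\mathbf v)<0$, yielding the contradiction.

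As for comparison, the paper is terser on the first part---it simply asserts that the loop-erased version of the projected walk is bi-infinite---whereas your ends argument makes explicit why the two directions cannot share an end. Conversely, for the unboundedness part the paper's route is shorter: it treats boundaries first (removing a tree-edge on the bi-infinite path leaves two infinite subtrees, hence by Lemma~\ref{twocomponents} two infinite components of $(X^d,E^d\smallsetminus E)$, forcing $E$ to be infinite), and then observes that every level set on the path equals $X_g^\pm(E)$ for such an $E$ and therefore contains the infinite set $\mathbf x_g^\pm(E)$. Your bijection between the components of $\operatorname{LSD}(g)\setminus\{L\}$ and those of $X^d\setminus L$ is correct and a pleasant structural fact, but the paper's shortcut avoids having to establish it.
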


This lemma is essential in understanding the geometry of $\operatorname{LSD}(g)$.
It is expected that the difference function $g=f_1-f_2$ of a typical sample
from $\mu$
looks somewhat like the leftmost subfigure of Figure~\ref{fig:trifurcation}.

\begin{proof}[Proof of Lemma~\ref{lemma_lsd_right_infinite_structure}]
  As $s_g:=s_1-s_2\neq 0$, there exists an index $1\leq i\leq d+1$
  such that $s_g(\mathbf g_i)\neq 0$.
  Fix such an $i$,
  and write $\mathbf p$ for the $\mathbb Z$-indexed path $\mathbf p:=(\mathbf p_k)_{k\in\mathbb Z}:=(k\mathbf g_i)_{k\in\mathbb Z}$ through $(X^d,E^d)$.
  Write $\mathbf q_k$ for the $g$-level
  set containing $\mathbf p_k$ for each $k\in\mathbb Z$.
  For each $k\in\mathbb Z$ the vertices $\mathbf p_k$ and $\mathbf p_{k+1}$
  are either contained in the same $g$-level set, or in two distinct neighbouring
  $g$-level sets.
  We consider $\mathbf q:=(\mathbf q_k)_{k\in\mathbb Z}$ a walk through $\operatorname{LSD}(g)$ although $\mathbf q$
  is not a walk in the strict sense: it may visit the same $g$-level set multiple times in a row.
  Proposition~\ref{propo:limitFlatness} says
  that $\mu$-almost surely $g(\mathbf p_k)-g(\mathbf p_0) = k s_g(\mathbf g_i) +o(k)$
  as $k\to \infty$ or $k\to-\infty$.
  This implies that there is a well-defined and unique loop-erased bi-infinite version
  of the path $\mathbf q$ up to indexation, which is the desired
  $\mathbb Z$-isomorphic subgraph of $\operatorname{LSD}(g)$.
  This proves the first part of the lemma.

  Focus on the second statement, which is deterministic in nature.
  Fix a $g$-boundary $E\subset E^d$ that is an edge of a
  subgraph of $\operatorname{LSD}(g)$ that is isomorphic $\mathbb Z$.
  Then removing $E$
  from $\operatorname{LSD}(g)$ disconnects $\operatorname{LSD}(g)$
  and separates the graph into two infinite components.
  In particular, this implies that
  the graph $(X^d,E^d\smallsetminus E)$ consists of
  two infinite connected components.
  If $E$ were finite, then one of the two connected components of
  $(X^d,E^d\smallsetminus E)$ had to be finite, and therefore we conclude that $E$ is infinite.
  The $g$-boundary $E$
  connects the two $g$-level sets $X_g^-(E)$ and $X_g^+(E)$ when considered an $\operatorname{LSD}(g)$-edge,
  and these must also be infinite as one of them contains the infinite set $\mathbf x^-_g(E)$
  and the other $\mathbf x^+_g(E)$.
  This proves the second statement of the lemma.
\end{proof}


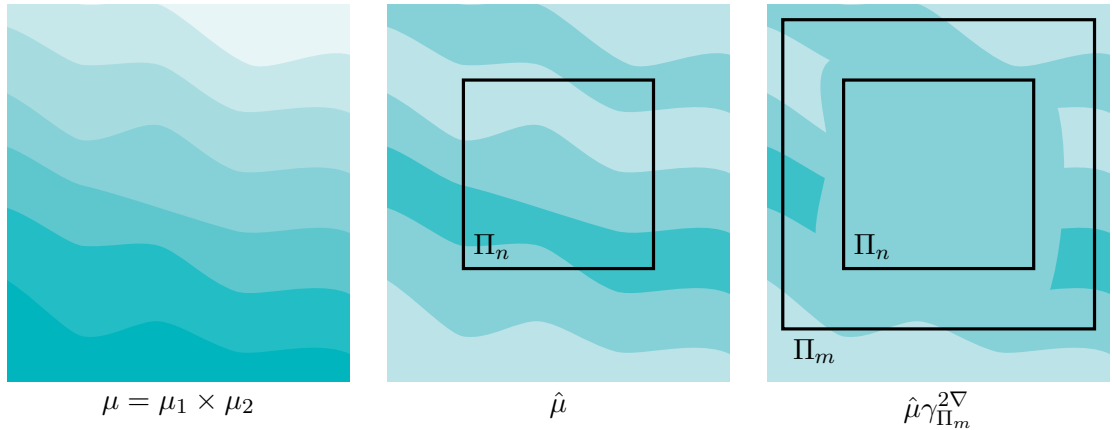
\begin{figure}
\begin{center}
\begin{tikzpicture}[x=1cm,y=1cm]

  \begin{scope}[shift={(0,0)}]
    \node[anchor=north] at (2.25,0) {$\mu=\mu_1\times\mu_2$};
      \clip (0,0) rectangle (4.5,5);
      \fill[Aquamarine!10] (0,0) rectangle (4.5,5);
      \begin{scope}[shift={(0,3.8)}]
      \fill[Aquamarine!25] plot [smooth cycle] coordinates{
        (4.5,-5)
        (4.8,0.2)
        (3.2,0.4)
        (2,1.2)
        (1,1.2)
        (-0.5,1.4)
        (0,-5)
        };
      \end{scope}
      \begin{scope}[shift={(0,3)}]
      \fill[Aquamarine!40] plot [smooth cycle] coordinates{
        (4.5,-5)
        (4.8,0.2)
        (3,0.6)
        (2,1.2)
        (1,1.2)
        (-0.5,1.4)
        (0,-5)
        };
      \end{scope}
      \begin{scope}[shift={(0,2.2)}]
      \fill[Aquamarine!55] plot [smooth cycle] coordinates{
        (4.5,-5)
        (4.8,0.2)
        (3,0.6)
        (2,1.2)
        (1,1)
        (-0.5,1.4)
        (0,-5)
        };
      \end{scope}
      \begin{scope}[shift={(0,1.4)}]
      \fill[Aquamarine!70] plot [smooth cycle] coordinates{
        (4.5,-5)
        (4.8,0.2)
        (3,0.6)
        (1,1.2)
        (-0.5,1.4)
        (0,-5)
        };
      \end{scope}
      \begin{scope}[shift={(0,0.6)}]
      \fill[Aquamarine!85] plot [smooth cycle] coordinates{
        (4.5,-5)
        (4.8,0.2)
        (3,0.6)
        (2,1.2)
        (1,1.2)
        (-0.5,1.4)
        (0,-5)
        };
      \end{scope}
      \begin{scope}[shift={(0,-0.2)}]
      \fill[Aquamarine] plot [smooth cycle] coordinates{
        (4.5,-5)
        (4.8,0.2)
        (3,0.6)
        (2,1)
        (1,0.8)
        (-0.5,1.4)
        (0,-5)
        };
      \end{scope}
  \end{scope}

  \begin{scope}[shift={(5,0)}]
    \node[anchor=north] at (2.25,0) {$\hat\mu$};
      \clip (0,0) rectangle (4.5,5);
      \fill[Aquamarine!30] (0,0) rectangle (4.5,5);
      \begin{scope}[shift={(0,3.8)}]
      \fill[Aquamarine!55] plot [smooth cycle] coordinates{
        (4.5,-5)
        (4.8,0.2)
        (3.2,0.4)
        (2,1.2)
        (1,1.2)
        (-0.5,1.4)
        (0,-5)
        };
      \end{scope}
      \begin{scope}[shift={(0,3)}]
      \fill[Aquamarine!30] plot [smooth cycle] coordinates{
        (4.5,-5)
        (4.8,0.2)
        (3,0.6)
        (2,1.2)
        (1,1.2)
        (-0.5,1.4)
        (0,-5)
        };
      \end{scope}
      \begin{scope}[shift={(0,2.2)}]
      \fill[Aquamarine!55] plot [smooth cycle] coordinates{
        (4.5,-5)
        (4.8,0.2)
        (3,0.6)
        (2,1.2)
        (1,1)
        (-0.5,1.4)
        (0,-5)
        };
      \end{scope}
      \begin{scope}[shift={(0,1.4)}]
      \fill[Aquamarine!80] plot [smooth cycle] coordinates{
        (4.5,-5)
        (4.8,0.2)
        (3,0.6)
        (1,1.2)
        (-0.5,1.4)
        (0,-5)
        };
      \end{scope}
      \begin{scope}[shift={(0,0.6)}]
      \fill[Aquamarine!55] plot [smooth cycle] coordinates{
        (4.5,-5)
        (4.8,0.2)
        (3,0.6)
        (2,1.2)
        (1,1.2)
        (-0.5,1.4)
        (0,-5)
        };
      \end{scope}
      \begin{scope}[shift={(0,-0.2)}]
      \fill[Aquamarine!30] plot [smooth cycle] coordinates{
        (4.5,-5)
        (4.8,0.2)
        (3,0.6)
        (2,1)
        (1,0.8)
        (-0.5,1.4)
        (0,-5)
        };
      \end{scope}
      \draw[very thick] (1,1.5) rectangle (3.5,4);
      \node[anchor=south west] at (1,1.5) {$\Pi_n$};
  \end{scope}

  \begin{scope}[shift={(10,0)}]
    \node[anchor=north] at (2.25,0) {$\hat\mu\gamma_{\Pi_m}^{2\nabla}$};
      \clip (0,0) rectangle (4.5,5);
      \fill[Aquamarine!30] (0,0) rectangle (4.5,5);
      \begin{scope}[shift={(0,3.8)}]
      \fill[Aquamarine!55] plot [smooth cycle] coordinates{
        (4.5,-5)
        (4.8,0.2)
        (3.2,0.4)
        (2,1.2)
        (1,1.2)
        (-0.5,1.4)
        (0,-5)
        };
      \end{scope}
      \begin{scope}[shift={(0,3)}]
      \fill[Aquamarine!30] plot [smooth cycle] coordinates{
        (4.5,-5)
        (4.8,0.2)
        (3,0.6)
        (2,1.2)
        (1,1.2)
        (-0.5,1.4)
        (0,-5)
        };
      \end{scope}
      \begin{scope}[shift={(0,2.2)}]
      \fill[Aquamarine!55] plot [smooth cycle] coordinates{
        (4.5,-5)
        (4.8,0.2)
        (3,0.6)
        (2,1.2)
        (1,1)
        (-0.5,1.4)
        (0,-5)
        };
      \end{scope}
      \begin{scope}[shift={(0,1.4)}]
      \fill[Aquamarine!80] plot [smooth cycle] coordinates{
        (4.5,-5)
        (4.8,0.2)
        (3,0.6)
        (1,1.2)
        (-0.5,1.4)
        (0,-5)
        };
      \end{scope}
      \begin{scope}[shift={(0,0.6)}]
      \fill[Aquamarine!55] plot [smooth cycle] coordinates{
        (4.5,-5)
        (4.8,0.2)
        (3,0.6)
        (2,1.2)
        (1,1.2)
        (-0.5,1.4)
        (0,-5)
        };
      \end{scope}
      \begin{scope}[shift={(0,-0.2)}]
      \fill[Aquamarine!30] plot [smooth cycle] coordinates{
        (4.5,-5)
        (4.8,0.2)
        (3,0.6)
        (2,1)
        (1,0.8)
        (-0.5,1.4)
        (0,-5)
        };
      \end{scope}
      \fill[Aquamarine!55] plot [smooth cycle] coordinates{
        (0.8,1.5)
        (0.8,3)
        (0.9,4.3)
        (3.7,3.9)
        (3.5,0.8)
        };
      \draw[very thick] (1,1.5) rectangle (3.5,4);
      \node[anchor=south west] at (1,1.5) {$\Pi_n$};
      \draw[very thick] (0.2,0.7) rectangle (4.3,4.8);
      \node[anchor=north west] at (0.2,0.7) {$\Pi_m$};
  \end{scope}

\end{tikzpicture}

\caption[A trifurcation box]{
Difference samples from three different measures;
$\Pi_m$ is a trifurcation box.
}

\label{fig:trifurcation}

\end{center}
\end{figure}

We now give an overview of the remainder of the proof.
The key idea is to construct a new shift-invariant double gradient measure
$\hat\mu\in\mathcal P_\Theta(\Omega^2,\mathcal F^{2\nabla})$.
Write $\hat f_1$, $\hat f_2$ and $\hat g:=\hat f_1-\hat f_2$
for the random functions in $\hat \mu$.
To sample from $\hat\mu$, first draw a pair $(f_1,f_2)$
from the original measure $\mu=\mu_1\times\mu_2$.
Then obtain $(\hat f_1,\hat f_2)$ from $(f_1,f_2)$
by flipping a fair coin for every $g$-boundary in order to determine
wether or not to reverse the orientation of that $g$-boundary.
In other words, we rerandomise the orientation of each $g$-boundary.
In the measure $\hat\mu$,
the orientations of the edges in the graph $(\operatorname{LSD}(\hat g),\nabla\hat g)$
are thus uniformly random and independent of all other structure that is present.
First, we show that
the resampling operation does not affect the specific entropy, that is,
\[\mathcal H^2(\hat \mu)=\mathcal H^2(\mu)=\sigma(s_1)+\sigma(s_2).\]
Second, we prove that for $i\in\{1,2\}$
we $\hat \mu$-almost surely have
\begin{equation}
  \label{eq:desired_concentration_hat_mu_marginals}
  \lim_{n\to\infty}\frac 1n \| (\hat f_i-\hat f_i(\mathbf 0))|_{\Pi_n}-s_a|_{\Pi_n}\|_\infty = 0.
\end{equation}
Note that the concentration of the gradient of either function
is around the average slope $s_a$.
Third, we prove that $\hat \mu$ is not Gibbs.
Theorem~\ref{thm:H_sigma_ineq_product} therefore implies that $\mathcal H^2(\hat\mu)>2\sigma(s_a)$,
the desired result.

Let us elaborate on the third step, before proceeding.
Suppose that $\hat\mu$ is Gibbs, in order to derive a contradiction.
A \emph{trifurcation box} of $\hat g$
is a finite subset of $X^d$ of the form $R=\theta\Pi_n$
such that, for some infinite $\hat g$-level set $X\subset X^d$,
 removing $R$ from $X$ means breaking $X$ into
at least three infinite components.
We show that
$\hat g$ has a trifurcation box
with positive probability in the measure
$\hat\mu\gamma_{\Pi_m}^{2\nabla}$
for $m$ sufficiently large:
see the middle and rightmost subfigures in Figure~\ref{fig:trifurcation}.
If $\hat\mu$ were Gibbs then $\hat\mu=\hat\mu\gamma_{\Pi_m}^{2\nabla}$,
and therefore a sample $\hat g$ from $\hat\mu$
has a trifurcation box with positive probability.
Trifurcation boxes do almost surely not occur in shift-invariant
measures, by a simple geometrical argument described by Burton and Keane in their celebrated paper \cite{BURTON}.
This proves that $\hat \mu$ is not Gibbs.

\subsection{Detailed proof}

\begin{lemma}
  The specific entropy of $\hat\mu$ equals the specific entropy of $\mu$.
\end{lemma}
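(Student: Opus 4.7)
The plan is to exploit the fact that the cluster boundary swap decomposes into operations on finite and infinite $g$-boundaries, with $\mu$ invariant under the former. Consequently $\hat\mu$ differs from $\mu$ only through rerandomisation of the orientations of the infinite $g$-boundaries, and these contribute only a boundary-order correction $O(n^{d-1})$ to the finite-volume entropy $\mathcal H_{\Pi_n}^2$, which disappears after division by $|\Pi_n|=(2n)^d$.

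The first step is to establish $\mu K_{\mathrm{fin}}=\mu$, where $K_{\mathrm{fin}}$ is the Markov kernel that independently flips a fair coin on every \emph{finite} $g$-boundary and swaps accordingly. Fix a large box $\Pi_N$ and condition on the values of $(f_1,f_2)$ outside $\Pi_N$; since $\mu=\mu_1\times\mu_2$ is a product of gradient Gibbs measures, the conditional distribution is uniform on the finite product set $\Omega(\Pi_N,f_1)\times\Omega(\Pi_N,f_2)$. For any finite $g$-boundary $E$ entirely contained in $E^d(\Pi_N)$, the map $(f_1,f_2)\mapsto(f_1,f_2)\ominus E$ of Lemma~\ref{lem:construction_of_the_CBS} is an involution that preserves this admissible product set because the boundary conditions are unchanged. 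The restricted kernel $K_{\mathrm{fin},N}$, which flips independent fair coins on every such $E$, is a probabilistic composition of these involutions and therefore preserves the uniform conditional distribution; averaging over boundary conditions yields $\mu K_{\mathrm{fin},N}=\mu$. Sending $N\to\infty$, every finite $g$-boundary is eventually contained in $E^d(\Pi_N)$, so $\mu K_{\mathrm{fin}}=\mu$. By independence of the coin flips, $\hat\mu=\mu K_{\mathrm{fin}}K_{\mathrm{inf}}=\mu K_{\mathrm{inf}}$, where $K_{\mathrm{inf}}$ rerandomises only the orientations of the infinite $g$-boundaries.

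The second step is an information-theoretic comparison of $\mathcal H_{\Pi_n}^2(\mu)$ and $\mathcal H_{\Pi_n}^2(\hat\mu)$. Under both measures the joint distribution of $\nabla(f_1+f_2)$ and the boundary structure $G_g$ on $E^d(\Pi_n)$, together with the orientations of finite $g$-boundaries intersecting $\Pi_n$, is identical (using $\mu K_{\mathrm{fin}}=\mu$ and noting that $K_{\mathrm{inf}}$ preserves $\nabla(f_1+f_2)$ and $|\nabla g|$). The measures differ only in the distribution of the orientations of the infinite $g$-boundaries intersecting $E^d(\Pi_n)$: under $\hat\mu$ these orientations are i.i.d.\ uniform. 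Any infinite $g$-boundary meeting $E^d(\Pi_n)$ must exit $\Pi_n$ and therefore contains an edge incident to $\partial\Pi_n$; distinct boundaries being edge-disjoint, the random count $N_n$ of such boundaries satisfies $N_n\leq|E^d(\partial\Pi_n)|=O(n^{d-1})$. The Shannon entropy of a $\pm1$-valued vector of length $N_n$ is at most $N_n\log 2$, so
\begin{equation*}
    0\leq\mathcal H_{\Pi_n}^2(\mu)-\mathcal H_{\Pi_n}^2(\hat\mu)\leq\mathbb E_\mu[N_n]\log 2=O(n^{d-1}).
\end{equation*}
Dividing by $|\Pi_n|=(2n)^d$ and taking $n\to\infty$ gives $\mathcal H^2(\hat\mu)=\mathcal H^2(\mu)$.

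The main obstacle is the first step: the $g$-boundaries are configuration-dependent objects, so the statement $\mu K_{\mathrm{fin}}=\mu$ is subtler than it appears. Resolving this requires the conditional argument above, in which the double Gibbs property of $\mu$ makes the conditional distribution uniform on a finite set on which each individual finite-boundary swap acts as a measure-preserving involution.
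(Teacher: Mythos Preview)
Your overall strategy matches the paper's: first reduce $\hat\mu$ to $\mu K_{\mathrm{inf}}$ via the Gibbs property (so only infinite $g$-boundaries are rerandomised), then argue that infinite boundaries must cross $\partial\Pi_n$ and therefore contribute only $O(n^{d-1})$ bits to the entropy in $\Pi_n$. Your first step is correct and essentially identical to the paper's opening paragraph.

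The gap is in the second step. Your displayed inequality
\[
0\leq\mathcal H_{\Pi_n}^2(\mu)-\mathcal H_{\Pi_n}^2(\hat\mu)\leq\mathbb E_\mu[N_n]\log 2
\]
asserts a lower bound of $0$ that you do not justify, and the decomposition you invoke to get it does not work. The ``structure'' $S$ you define includes the orientations of the \emph{finite} $g$-boundaries meeting $\Pi_n$; but whether a given boundary piece visible in $\Pi_n$ belongs to a finite or an infinite global boundary is \emph{not} $\mathcal F_{\Pi_n}^{2\nabla}$-measurable. Hence $(S,O_{\mathrm{inf}})$ is not a measurable factorisation of $(D_{\Pi_n}f_1,D_{\Pi_n}f_2)$, and the usual chain-rule/maximum-entropy argument (``under $\hat\mu$ the remaining bits are i.i.d.\ uniform, hence more entropy'') does not apply on this $\sigma$-algebra. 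In particular there is no a priori reason why rerandomising orientations of infinite boundaries cannot \emph{decrease} the Shannon entropy of the $\Pi_n$-projection.

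The repair is to make the estimate two-sided, and this is exactly what the paper does. Working with the coupling $\tilde\mu$ of $(f_1,f_2,\hat f_1,\hat f_2)$, one observes that every infinite boundary intersecting $\Pi_{n-1}$ must also intersect the annulus $\Pi_n\smallsetminus\Pi_{n-1}$, so $(D_{\Pi_n}\hat f_1,D_{\Pi_n}\hat f_2)$ is a deterministic function of $(D_{\Pi_n}f_1,D_{\Pi_n}f_2)$ together with $(D_{\Pi_n\smallsetminus\Pi_{n-1}}\hat f_1,D_{\Pi_n\smallsetminus\Pi_{n-1}}\hat f_2)$, and symmetrically with the roles of $(f_1,f_2)$ and $(\hat f_1,\hat f_2)$ exchanged. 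Since the annulus carries at most $O(n^{d-1})$ bits, this gives $|\mathcal H_{\Pi_n}^2(\mu)-\mathcal H_{\Pi_n}^2(\hat\mu)|=O(n^{d-1})$ without needing the unjustified sign. Your argument becomes correct once you replace the one-sided inequality by this symmetric reconstruction.
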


\begin{proof}
  We prove the stronger statement that $\mathcal H_{\Pi_n}^2(\hat\mu)=\mathcal H_{\Pi_n}^2(\mu)+O(n^{d-1})$ as $n\to\infty$.
  Fix $n\in\mathbb N$ large.
  The measure $\mu$ is Gibbs and therefore satisfies the DLR equation
  \[\mu=\mu\gamma_{\Pi_n}^{2\nabla}.\]
  This implies in particular that the
  distribution of
  a sample $(f_1,f_2)$ from $\mu$ is invariant
  under subsequently rerandomising the orientation of each $g$-boundary
  that is contained in $E^d(\Pi_n)$.
  As this is true for all $n$,
  we derive that the distribution of the sample is in fact invariant
  under rerandomising the orientation of any finite $g$-boundary.
  Thus, to sample $(\hat f_1,\hat f_2)$ from $\hat\mu$,
  one may first sample a pair $(f_1,f_2)$ from $\mu$,
  then rerandomise the orientation of only the $g$-boundaries
  which are infinite.
  Write $\tilde\mu$ for the corresponding coupling of $\mu$ and $\hat\mu$,
  and write $(f_1,f_2,\hat f_1,\hat f_2)$ for the random
  $4$-tuple in $\tilde\mu$.
%

  Write $\mathcal H_{A,B}^2(\tilde\mu)$
  for $\mathcal H_{\mathcal F_A^{2\nabla}\times\mathcal F_B^{2\nabla}}(\tilde\mu,\lambda^A_2\times\lambda^B_2)$
  for $A,B\subset X^d$ finite and claim that
  \begin{enumerate}
    \item $\mathcal H_{\Pi_n}^2(\mu)=\mathcal H_{\Pi_n,\Pi_n\smallsetminus \Pi_{n-1}}^2(\tilde\mu)+O(n^{d-1})$,
    \item $\mathcal H_{\Pi_n}^2(\hat\mu)=\mathcal H_{\Pi_n\smallsetminus \Pi_{n-1},\Pi_n}^2(\tilde\mu)+O(n^{d-1})$,
    \item $\mathcal H_{\Pi_n,\Pi_n\smallsetminus \Pi_{n-1}}^2(\tilde\mu)=
    \mathcal H_{\Pi_n,\Pi_n}^2(\tilde\mu)
    =\mathcal H_{\Pi_n\smallsetminus \Pi_{n-1},\Pi_n}^2(\tilde\mu)$.
  \end{enumerate}
  It is clear that these three claims jointly imply the lemma.

  Focus on the first claim,
  and consider thus the measures
  \[
    \mu|_{\mathcal F_{\Pi_n}^{2\nabla}},\qquad
    \tilde\mu|_{\mathcal F_{\Pi_n}^{2\nabla}\times\mathcal F_{\Pi_n\smallsetminus \Pi_{n-1}}^{2\nabla}}.
  \]
  The first claim is intuitive:
  the restriction of $\mu$ records the values of $D_{\Pi_n}(f_1)$ and $D_{\Pi_n}(f_2)$,
  and the restriction of $\tilde\mu$ records also
  the values of $D_{\Pi_n\smallsetminus \Pi_{n-1}}(\hat f_1)$
  and $D_{\Pi_n\smallsetminus \Pi_{n-1}}(\hat f_2)$.
  Informally, the extra information that
  the restriction of $\tilde\mu$ records
  is of order $n^{d-1}$, because
    $\log |\operatorname{Im}D_{\Pi_n\smallsetminus \Pi_{n-1}}|^2=O(n^{d-1})$.
    We now formalise this idea.
  For $x\in (\operatorname{Im}D_{\Pi_n})^2$,
  we write $\hat\mu^x$ for the measure $\tilde\mu$ conditioned on the event $\{(D_{\Pi_n}f_1,D_{\Pi_n}f_2)=x\}$
  and projected onto the product of the third and fourth component
  of the product measurable space $(\Omega,\mathcal F^\nabla)^4$.
  Then a standard entropy calculation implies that
    \[
    \mathcal H_{\Pi_n,\Pi_n\smallsetminus \Pi_{n-1}}^2(\tilde\mu)
    =
    \mathcal H_{\Pi_n}^2(\mu)
    +\int
    \mathcal H^2_{\Pi_n\smallsetminus \Pi_{n-1}}(\hat\mu^x)
      d\mu((D_{\Pi_n}f_1,D_{\Pi_n}f_2)=x),
    \]
    see Theorems~D.3 and~D.13 of~\cite{DEMBO}
    or Lemma~2.1.3 of~\cite{SHEFFIELD}.
    As in the proof of Theorem~\ref{thm_specific_entropy_single_gradient_measure},
    we have
     \[|\mathcal H^2_{\Pi_n\smallsetminus \Pi_{n-1}}(\hat\mu^x)|\leq\log|\operatorname{Im}D_{\Pi_n\smallsetminus \Pi_{n-1}}|^2=O(n^{d-1}).\]
    This proves the first claim.
    The second claim follows by identical reasoning.

     Focus on the third claim, in particular on the equality on the left---the
    equality on the right shall follow by the same arguments.
    For the equality on the left it suffices to demonstrate that,
    with $\tilde\mu$-probability one,
    the tuple
    \begin{equation}
      \label{eq:oneoneone}
      (D_{\Pi_n}f_1,D_{\Pi_n}f_2,D_{\Pi_n}\hat f_1,D_{\Pi_n}\hat f_2)
    \end{equation}
    can be reconstructed almost surely from
  \begin{equation}
    \label{eq:twotwotwo}
    (D_{\Pi_n}f_1,D_{\Pi_n}f_2,D_{\Pi_n\smallsetminus\Pi_{n-1}}\hat f_1,D_{\Pi_n\smallsetminus\Pi_{n-1}}\hat f_2).
  \end{equation}
    We know that $(f_1,f_2)$ and $(\hat f_1,\hat f_2)$ differ
    by cluster boundary swaps,
    where  all boundaries which are swapped, are infinite.
  To recover~\eqref{eq:oneoneone}
  from~\eqref{eq:twotwotwo}, we must therefore understand
  wether or not each infinite boundary which intersects $\Pi_{n-1}$,
  should be swapped or not.
  However, as each such boundary is infinite,
  it must intersect $\Pi_n\smallsetminus\Pi_{n-1}$,
  and its orientation for $(\hat f_1,\hat f_2)$
  can therefore be read off from~\eqref{eq:twotwotwo}.
\end{proof}

\begin{lemma}
  \label{lem:desiredconcentrationyouknow}
  Equation~\ref{eq:desired_concentration_hat_mu_marginals}
  holds true
  $\hat\mu$-almost surely
  for $i\in\{1,2\}$.
\end{lemma}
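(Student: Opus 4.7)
The plan is to exploit the identity $\hat f_1 + \hat f_2 = f_1 + f_2$, which follows from the first Remark after Definition~\ref{def:CBS_Official_Def}: a cluster boundary swap preserves the pointwise sum of the two height functions. Writing
\[
\hat f_1 - \hat f_1(\mathbf 0) = \tfrac12\bigl((f_1+f_2) - (f_1+f_2)(\mathbf 0)\bigr) + \tfrac12\bigl(\hat g - \hat g(\mathbf 0)\bigr),
\]
together with the analogous formula for $\hat f_2$ carrying a minus sign on the $\hat g$-term, handles both $i\in\{1,2\}$ at once. I would apply Proposition~\ref{propo:limitFlatness} to each marginal Gibbs measure $\mu_1$ and $\mu_2$ and combine the estimates by the triangle inequality; this shows that the first bracket concentrates uniformly on $\Pi_n$ around $2 s_a$ to order $o(n)$, $\mu$-almost surely. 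The lemma therefore reduces to proving that $\|\hat g|_{\Pi_n} - \hat g(\mathbf 0)\|_\infty = o(n)$ as $n\to\infty$, $\hat\mu$-almost surely.

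For the $\hat g$-term I would condition $\hat\mu$ on the pair $(f_1, f_2)$. Under this conditioning the graph $\operatorname{LSD}(g)$ is determined, and the remaining randomness in $\hat g$ is exactly a family of independent fair sign flips $(\epsilon_E)_E$ indexed by the $g$-boundaries, with $\epsilon_E = -1$ meaning that $E$ has been reversed. Since $\operatorname{LSD}(g)$ is a tree by Lemma~\ref{lem:subtleBigTreeLemma}, integrating $\nabla \hat g$ along the unique tree path from the level set of $\mathbf 0$ to that of $\mathbf x$ yields
\[
\hat g(\mathbf x) - \hat g(\mathbf 0) = (d+1)\sum_{k=1}^{N(\mathbf x)} \eta_k, \qquad N(\mathbf x) := d_{\operatorname{LSD}(g)}(\mathbf 0, \mathbf x),
\]
where the $\eta_k \in \{-1,+1\}$ are conditionally independent Rademacher variables.

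The main obstacle will be a deterministic geometric bound $N(\mathbf x) \leq d_{(X^d,E^d)}(\mathbf 0,\mathbf x)$ sufficient to extract Gaussian-tail decay from Azuma-Hoeffding. I would obtain it by walking along a shortest $(X^d,E^d)$-path from $\mathbf 0$ to $\mathbf x$: each step either stays inside a single $g$-level set or crosses exactly one $g$-boundary, so the number of boundary crossings is at most the graph distance; on the other hand, the crossings trace a walk between the two endpoints in the tree $\operatorname{LSD}(g)$ whose length is at least the tree distance. Since $d_{(X^d,E^d)}(\mathbf 0,\mathbf x) = O(n)$ uniformly on $\Pi_n$, Azuma-Hoeffding applied to the conditional Rademacher sum yields, for every $\varepsilon > 0$,
\[
\hat\mu\bigl(|\hat g(\mathbf x) - \hat g(\mathbf 0)| \geq \varepsilon n\bigr) \leq 2\exp(-c\,\varepsilon^2 n).
\]
A union bound over the $(2n)^d$ vertices of $\Pi_n$ preserves exponential decay in $n$, and Borel-Cantelli combined with a countable intersection over $\varepsilon \downarrow 0$ delivers the required almost-sure convergence.
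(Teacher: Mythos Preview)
Your proposal is correct and follows essentially the same approach as the paper: decompose $\hat f_i-\hat f_i(\mathbf 0)$ into the sum part $\tfrac12((f_1+f_2)-(f_1+f_2)(\mathbf 0))$, handled via invariance of the sum under the cluster boundary swap together with Proposition~\ref{propo:limitFlatness}, and the difference part $\pm\tfrac12(\hat g-\hat g(\mathbf 0))$, handled via the Rademacher representation plus Azuma--Hoeffding and a union bound. The paper's proof is the same in structure; it conditions on $\operatorname{LSD}(\hat g)$ rather than on $(f_1,f_2)$, but since $\operatorname{LSD}(\hat g)=\operatorname{LSD}(g)$ this is equivalent, and your explicit justification of the bound $N(\mathbf x)\leq d_{(X^d,E^d)}(\mathbf 0,\mathbf x)$ is a point the paper simply asserts.
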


\begin{proof}
  It suffices to prove that $\hat\mu$-almost surely
  \begin{align}
    \label{eq:sumtozeroasntoinfty}
    &\lim_{n\to\infty}\frac 1n \|
      (\hat f_1-\hat f_1(\mathbf 0))|_{\Pi_n}
     +(\hat f_2-\hat f_2(\mathbf 0))|_{\Pi_n}
     -2s_a|_{\Pi_n}
    \|_\infty = 0,\\
    \label{eq:sumtozeroasntoinfty2}
    &\lim_{n\to\infty}\frac 1n \|
    (\hat f_1-\hat f_1(\mathbf 0))|_{\Pi_n}
   -(\hat f_2-\hat f_2(\mathbf 0))|_{\Pi_n}
    \|_\infty = 0.
  \end{align}
  First focus on
   (\ref{eq:sumtozeroasntoinfty}).
   Recall the definition of $\mu$;
    Proposition~\ref{propo:limitFlatness} implies that almost surely
   \[
     \lim_{n\to\infty}\frac 1n \|
      (f_1-f_1(\mathbf 0))|_{\Pi_n}
     +(f_2-f_2(\mathbf 0))|_{\Pi_n}
     -(s_1+s_2)|_{\Pi_n}
    \|_\infty = 0.
   \]
    Recall  that
  the sum of two height functions is invariant under a cluster boundary swap,
  and that $2s_a=s_1+s_2$.
  The previous display therefore implies~\eqref{eq:sumtozeroasntoinfty}.
  For~\eqref{eq:sumtozeroasntoinfty2} we must show that $\hat\mu$-almost surely
  \begin{equation}
    \label{eq:abdc}
  \lim_{n\to\infty}\frac 1n \|
 (\hat g-\hat g(\mathbf 0))|_{\Pi_n}
  \|_\infty = 0
  \end{equation}
  Fix some $\mathbf x\in X^d$;
  we are interested in the distribution of $\hat g(\mathbf x)-\hat g(\mathbf 0)$.
  The distribution of $\hat g(\mathbf x)-\hat g(\0)$
  conditional on $\operatorname{LSD}(\hat g)$ is
  given by summing
  $d_{\operatorname{LSD}(\hat g)}(\0,\mathbf x)\leq
  d_{(X^d,E^d)}(\0,\mathbf x)$ fair coins each valued
  $\pm (d+1)$.
  This follows immediately from the definition of the measure $\hat\mu$.
  Application of the Azuma-Hoeffding inequality
  yields,
  for $a\geq 0$ and $\mathbf x\neq \mathbf 0$,
  \[
    \hat\mu(|\hat g(\mathbf x)-\hat g(\mathbf 0)|\geq (d+1)a)\leq 2\exp-\frac{a^2}{2d_{(X^d,E^d)}(\mathbf 0,\mathbf x)}.
  \]
    A union bound now implies~\eqref{eq:abdc}.
\end{proof}

\begin{lemma}
  \label{lem_notGibbs}
  The double gradient measure $\hat\mu$ is not Gibbs.
\end{lemma}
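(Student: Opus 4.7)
I argue by contradiction along the lines of the proof overview, using the Burton-Keane technique. Suppose $\hat\mu$ is a Gibbs measure, so $\hat\mu = \hat\mu\, \gamma^{2\nabla}_{\Pi_m}$ for every $m \in \mathbb N$. Call a set of the form $\theta \Pi_n$ a \emph{trifurcation box} of $\hat g$ if some infinite $\hat g$-level set $X$ has the property that removing $\theta \Pi_n$ from the graph $(X^d, E^d \setminus V_{\hat g})$ disconnects $X$ into at least three infinite components each adjacent to $\theta \Pi_n$. The goal is to show that for some $n$ the $\hat\mu$-probability that $\Pi_n$ is a trifurcation box is strictly positive, and then to derive a contradiction from shift-invariance.

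For the positive-probability step, fix $n$ and choose $m \gg n$. By Lemma~\ref{lemma_lsd_right_infinite_structure} there are infinitely many infinite $\hat g$-level sets $\hat\mu$-almost surely, organised into a $\mathbb Z$-indexed chain of distinct values. On a positive-probability $\hat\mu$-event determined by the configuration outside $\Pi_m$, one can ensure that some common infinite level set $X$ contains three disjoint infinite outer pieces approaching $\partial \Pi_m$ in three separated regions. Conditional on such outer data, $\gamma^{2\nabla}_{\Pi_m}$ is the uniform measure on the finite set of pairs $(\hat f_1, \hat f_2)$ inside $\Pi_m$ consistent with the boundary. Within that set one exhibits at least one explicit configuration gluing the three outer arms into a common level set meeting at $\Pi_n$, so that $\Pi_n$ becomes a trifurcation box. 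This configuration is designed first at the level of $\operatorname{LSD}(\hat g)$ via Theorem~\ref{thm:lekkerpropo} and then lifted to height functions using the Kirszbraun-type extension of Lemma~\ref{lem:NEW_Kirszbraun}; the combinatorial freedom to carry this out comes from $s_a$ lying in the interior of $\mathcal S$. Since the uniform kernel assigns positive weight to every single compatible configuration, this yields the desired positive lower bound.

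Under the assumed Gibbs property this gives $\hat\mu(\Pi_n \text{ is a trifurcation box}) \geq p > 0$, and shift-invariance of $\hat\mu$ then implies that the expected number of trifurcation boxes $\theta \Pi_n \subset \Pi_N$ grows at least linearly in the volume $|\Pi_N|$, of order $N^d$. On the other hand, the classical Burton-Keane combinatorial bound~\cite{BURTON} shows that in any fixed configuration the number of disjoint trifurcation boxes inside $\Pi_N$ is only $O(N^{d-1})$: each trifurcation produces three disjoint infinite arms of a common level set reaching distinct portions of $\partial \Pi_N$, and a pigeonhole count on boundary cells caps the total. Letting $N \to \infty$ yields the required contradiction.

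The main obstacle is the explicit construction inside $\Pi_m$ creating the trifurcation while matching prescribed outer boundary data; equivalently, one must check that the event isolating three disjoint infinite outer pieces of a common level set crossing $\partial \Pi_m$ has positive probability under $\hat\mu$, and exhibit at least one compatible trifurcating inner configuration. Both rely on the $\mathbb Z$-structure from Lemma~\ref{lemma_lsd_right_infinite_structure} and on the interior-slope hypothesis; working at the level-set-decomposition level via Theorem~\ref{thm:lekkerpropo}, where one builds the target configuration by first prescribing a graph homomorphism to $(d+1)\mathbb Z$ and then solving for compatible height functions, is what makes the construction tractable. The Burton-Keane counting is standard once this step is in place.
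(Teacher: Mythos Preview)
Your overall strategy (assume Gibbs, produce trifurcation boxes with positive probability, contradict Burton--Keane) matches the paper, but the crucial positive-probability step has two genuine gaps.

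First, the outer event you describe---a single infinite level set $X$ already possessing three disjoint infinite pieces outside $\Pi_m$---is essentially a trifurcation event itself, so invoking it is circular. The paper's event $I(n)$ is different and easier: it asks for three \emph{distinct} infinite $\hat g$-level sets, all meeting $\Pi_n$, on which $\hat g$ takes the \emph{same} value. Positive probability of $I(n)$ comes directly from the defining feature of $\hat\mu$: the orientations of the $\operatorname{LSD}(\hat g)$-edges are independent fair coins, so along the $\mathbb Z$-indexed path $(\mathbf p_k)$ of Lemma~\ref{lemma_lsd_right_infinite_structure} one has $\hat g(\mathbf p_{-2})=\hat g(\mathbf p_0)=\hat g(\mathbf p_2)$ with conditional probability $1/4$, independently of the tree. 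Your sketch never uses this coin-flip independence, which is the whole point of working with $\hat\mu$ rather than $\mu$.

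Second, your proposed construction inside $\Pi_m$ via Theorem~\ref{thm:lekkerpropo} invokes the wrong tool. That theorem parametrises the cluster-swap equivalence class of a fixed pair $(f_1,f_2)$, but the Gibbs kernel $\gamma_{\Pi_m}^{2\nabla}$ samples uniformly from the full product $\Omega(\Pi_m,\hat f_1)\times\Omega(\Pi_m,\hat f_2)$, which is much larger than any one equivalence class; you need to exhibit an element of this product, not a swap-equivalent pair. The paper keeps $\hat f_1$ unchanged and replaces $\hat f_2$ by an explicit height function $f''$ equal to $\hat f_1$ on $\Pi_n$, equal to $\hat f_2$ on $\Pi_m^\complement$, and pointwise between $\hat f_1$ and $\hat f_2$ (constructed from a Kirszbraun extension clipped via Lemma~\ref{lemma:aux}). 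Since $\{\hat f_1=\hat f_2\}\subset\{\hat f_1=f''\}$ and $\Pi_n\subset\{\hat f_1=f''\}$, the three equal-height level sets of $I(n)$ merge into one for the pair $(\hat f_1,f'')$, while outside $\Pi_m$ nothing changes; this is exactly the trifurcation. The Lipschitz patching that makes $f''$ exist is the event $L(m)$, and $\hat\mu(L(m))\to 1$ is where the interior-slope hypothesis enters via~\eqref{eq:desired_concentration_hat_mu_marginals}.
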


\begin{proof}
  Define the $\mathcal F^{2\nabla}$-measurable event
  \[I(n):=\left\{~\parbox{24em}{$\hat g$ takes the same value on three distinct infinite $\hat g$-level sets
  that all intersect $\Pi_n$, one of which contains $\mathbf 0$}~\right\}\subset \Omega^2,\]
  and claim that $\hat\mu(I(n))>0$ for $n\in\mathbb N$ sufficiently large.
  First,
  a cluster boundary swap
  leaves $\operatorname{LSD}(g)$ invariant,
  and therefore
    Lemma~\ref{lemma_lsd_right_infinite_structure}
  holds true for $g$ replaced with $\hat g$ and $\mu$ with $\hat\mu$.
  Therefore it is $\hat\mu$-almost certain that $\operatorname{LSD}(\hat g)$ contains a $\mathbb Z$-indexed self-avoiding walk
  $\mathbf p=(\mathbf p_k)_{k\in\mathbb Z}$.
  Let $\mathbf p$ be chosen deterministically in terms of $\operatorname{LSD}(\hat g)$,
  so that $(\hat g(\mathbf p_{k+1})-\hat g(\mathbf p_k))_{k\in\mathbb Z}$
  is a sequence of i.i.d.\ random variables each distributed uniformly
  in $\pm (d+1)$, independent of $\operatorname{LSD}(\hat g)$.
  In particular, the event
  $\{\hat g(\mathbf p_{k\pm 2})-\hat g(\mathbf p_k)=0\}$
  has probability $\frac14$ for each fixed $k$.
  As $\hat\mu$ is shift-invariant,
  we may choose $\mathbf p$ such that $\hat\mu(\mathbf 0\in\mathbf p_0)>0$.
  Choose $n\in\mathbb N$  sufficiently large
  such that, conditional on $\{\mathbf 0\in\mathbf p_0\}$,
  the set $\Pi_n$ intersects $\mathbf p_{\pm 2}$ with positive probability.
  Note that
$\hat g(\mathbf p_{\pm 2})-\hat g(\mathbf p_0)=0$ with probability $\frac14$
independently of the occurrence of both previous events,
and therefore the original event $I(n)$ has positive probability.
This is the claim.
Fix $n\in\mathbb N$ such that $\varepsilon:=\frac12\hat\mu(I(n))>0$.
See the middle display in Figure~\ref{fig:trifurcation}
for the level set decomposition of the difference function $\hat g$ corresponding to a sample from the event $I(n)$.

  If $h$ and $h'$ are real-valued functions defined
  on two disjoint subsets $A$ and $A'$ of $X^d$
  respectively,
  then write $hh'$ for the unique function
  on $A\cup A'$ which equals $h$ on $A$
  and $h'$ on $A'$.
Next, define for $m\geq n$ the $\mathcal F^{2\nabla}$-measurable event
\begin{align*}
L(m)&:=\left\{\text{the function $(\hat f_1-\hat f_1(\mathbf 0))|_{\Pi_n}(\hat f_2-\hat f_2(\mathbf 0))|_{\Pi_m^\complement}$ extends to a height function}\right\}
\\
&\phantom{:}=
\left\{\text{the function $(\hat f_1-\hat f_1(\mathbf 0))|_{\Pi_n}(\hat f_2-\hat f_2(\mathbf 0))|_{\Pi_m^\complement}$ is Lipschitz}\right\}
\subset \Omega^2,
\end{align*}
and claim that $\hat\mu(L(m))\to 1$ as $m\to\infty$.
Recall that~\eqref{eq:desired_concentration_hat_mu_marginals}
holds true for $i=2$ with $\hat\mu$-probability one.
Therefore it suffices to show that
the function
  \[
    (\hat f_1-\hat f_1(\mathbf 0))|_{\Pi_n}(\hat f_2-\hat f_2(\mathbf 0))|_{\Pi_m^\complement}
    \]
is Lipschitz for $m$ sufficiently large
whenever $\hat f_2$ satisfies~\eqref{eq:desired_concentration_hat_mu_marginals}.
  To see this, write $h_m^\pm$
  for the largest and smallest functions in
  $\Omega(\Pi_m,\hat f_2-\hat f_2(\0))$ respectively.
  In other words,
  the functions $h_m^\pm$ are the largest and smallest
  extensions of $(\hat f_2-\hat f_2(\0))|_{\Pi_m^\complement}$
  to $X^d$ which are height functions.
  Note that there exist constants $\alpha^\pm>0$
  such that $h_m^\pm(\0)=\pm\alpha^\pm m+o(m)$
  as $m\to\infty$ due to~\eqref{eq:desired_concentration_hat_mu_marginals}
  and because $s_a$ is in the \emph{interior} of
  the set of Lipschitz slopes.
  In particular,
  \[
  h_m^+|_{\Pi_n}\geq (\hat f_1-\hat f_1(\0))|_{\Pi_n},
    \qquad
  h_m^-|_{\Pi_n}\leq (\hat f_1-\hat f_1(\0))|_{\Pi_n}
  \]
  for $m$ sufficiently large.
This proves the claim.
Fix $m$ so large that $\hat\mu(L(m))\geq 1-\varepsilon$,
which implies that $\hat\mu(I(n)\cap L(m))\geq \varepsilon>0$.

Define for $\mathbf x\in X^d$ the $\mathcal F^{2\nabla}$-measurable event
\begin{align*}
  T(\mathbf x)&:=\left\{
~ \parbox{25em}{$(X^d,E^d\smallsetminus (V_{\hat g}\cup E^d(\Pi_m+\mathbf x)))$
has three infinite connected components that are contained in a single
$\hat g$-level set}
~  \right\}\\
&\phantom{:}=\left\{
~ \parbox{25em}{$(X^d,E^d\smallsetminus (V_{\hat g}\cup E^d(\Pi_m+\mathbf x)))$
has three infinite connected components that are contained in one
connected component of $(X^d,E^d\smallsetminus V_{\hat g})$}
~  \right\}
\subset \Omega^2;
\end{align*}
we shall first focus on $T:=T(\mathbf 0)$.
See the rightmost subfigure in Figure~\ref{fig:trifurcation}
for the level set decomposition of the difference function $\hat g$ corresponding to a sample from $T$.
We claim that $\Omega(\Pi_m,f_1)\times\Omega(\Pi_m,f_2)$
intersects $T$ whenever
$(f_1,f_2)\in I(n)\cap L(m)$.
Fix such a pair $(f_1,f_2)$ and assume, without loss of generality,
that $f_1(\mathbf 0)=f_2(\mathbf 0)=0$.
  Assert first that there exists a height function $f''$
  which equals $f_1$ on $\Pi_n$
  and which equals $f_2$ on $\Pi_m^\complement$,
  and which at any vertex $\x$ takes values between
  $f_1(\x)$ and $f_2(\x)$.
  The following construction works:
  write first $f'$ for a height function
  which extends $f_1|_{\Pi_n}f_2|_{\Pi_m^\complement}$
  to $X^d$,
  then define $f''$
  by
  \[f'':=(f'\vee(f_1\wedge f_2) ) \wedge (f_1 \vee f_2).\]
  This proves the assertion.
  Now $f''\in\Omega(\Pi_m,f_2)$ as $f''$ equals $f_2$ on $\Pi_m^\complement$.
  Moreover, as at each vertex the value of $f''$
  is in between the values of $f_1$ and $f_2$,
  we have $ \{f_1=f_2\}\subset\{f_1=f''\} \subset X^d$,
  and we also know that
  $\Pi_n\subset\{f_1=f''\} $.
  This implies that $(f_1,f'')\in T$,
  since
  the three connected components of $\{f_1=f_2\}$
  are contained in a single connected component of
  $\{f_1=f''\}$,
  and because $\{f_1=f_2\}\smallsetminus\Pi_m=\{f_1=f''\}\smallsetminus\Pi_m$,
  that is, removing $\Pi_m$ disconnects these three components again.

  Suppose that $\hat\mu$ is a Gibbs measure, in order to derive a contradiction.
  Then the event $T$ must occur with
  positive probability for $\hat\mu$,
  since $\hat\mu(I(n)\cap L(m))>0$,
  and since $T$ has positive probability for
  \[
    \gamma_{\Pi_m}^{2\nabla}(\cdot,(\hat f_1,\hat f_2))
  \]
  whenever $(\hat f_1,\hat f_2)\in I(n)\cap L(m)$.
  The argument of Burton and Keane~\cite{BURTON}
dictates that trifurcation boxes do almost surely not occur in shift-invariant
probability measures.
In other words, $\hat\mu(T)=0$. This is the desired contradiction.
  To see that $\hat\mu(T)=0$, observe that $\hat\mu(T(\x))$
  is independent of $\x$,
  implying that trifurcation boxes must occur with positive density
  whenever $\hat\mu(T)>0$.
  This is impossible due to geometrical constraints of the amenable graph $(X^d,E^d)$.
\end{proof}

This establishes Theorem~\ref{thm:strictly_convex}.

\section*{Acknowledgement}
The author is grateful to Nathana\"el Berestycki and James Norris for their enthusiastic supervision of the writing of this paper.
The author thanks Amir Dembo, Richard Kenyon, Scott Sheffield, and Martin Tassy
for helpful discussions.
Finally, the author would like to thank the anonymous referees for their insightful feedback.

The author was supported by the Department of
Pure Mathematics and Mathematical Statistics, University of Cambridge and the UK
Engineering and Physical Sciences Research Council grant EP/L016516/1.

\bibliographystyle{amsalpha}

\bibliography{references_proper/01_in_argument,references_proper/02_exact_models,references_proper/03_similar_surfaces,references_proper/04_classical_dimer_papers,references_proper/05_modern_dimers,rework}

\end{document}